\newcommand{\funding}[1]{\textbf{Funding:} #1}
\newcommand{\email}[1]{\texttt{#1}}
\title{Singular Layer Physics-Informed Neural Network Method for Convection-Dominated Boundary Layer Problems in Two Dimensions\thanks{%Submitted to the editors \today.

\funding{Gie was partially supported by 
Ascending Star Fellowship, Office of EVPRI, University of Louisville; 
Simons Foundation Collaboration Grant for Mathematicians; 
Research R-II Grant, Office of EVPRI, University of Louisville; 
Brain Pool Program through the National Research Foundation of Korea (NRF) (2020H1D3A2A01110658).  
The work of Y. Hong was supported by Basic Science Research Program through the National Research Foundation of Korea (NRF) funded by the Ministry of Education (NRF-2021R1A2C1093579) and by the Korea government(MSIT) (RS-2023-00219980). 
Jung was supported by the National Research Foundation of Korea(NRF) grant
funded by the Korea government(MSIT) (No. 2023R1A2C1003120).}}}
\author{Gung-Min Gie\thanks{ Department of Mathematics, University of Louisville, Louisville, KY 
  (\email{gungmin.gie@louisville.edu}).}
  \and
Youngjoon Hong\thanks{Department of Mathematical Sciences, Korea Advanced Institute of Science and Technology (KAIST), Daejeon, Korea 
  (\email{hongyj@kaist.ac.kr}).}
  \and
  Chang-Yeol  Jung\thanks{Department of Mathematical Sciences, Ulsan National Institute of Science and Technology, 
Ulsan, Korea 
  (\email{cjung@unist.ac.kr}).} 
  \and 
Dongseok Lee\thanks{Department of Mathematical Sciences, Korea Advanced Institute of Science and Technology (KAIST), Daejeon, Korea 
  (\email{lorafa@kaist.ac.kr}).}
}
\numberwithin{equation}{section}
\numberwithin{figure}{section}
\date{\today}
\renewcommand{\epsilon}{\varepsilon}
\newcommand{\ep}{\varepsilon}
\newcommand{\blds}{\boldsymbol}
\newcommand{\be}{\begin{equation}}
\newcommand{\ee}{\end{equation}}
\newcommand{\bes}{\begin{equation*}}
\newcommand{\ees}{\end{equation*}}
\newcommand{\bse}{\begin{subequations}}
\newcommand{\ese}{\end{subequations}}
\theoremstyle{plain}
\newtheorem{theorem}{Theorem}[section]
\theoremstyle{plain}
\newtheorem{lemma}{Lemma}[section]
\theoremstyle{plain}
\begin{document}
\maketitle
% REQUIRED

\begin{abstract}

This research explores neural network-based numerical approximation of two-dimensional convection-dominated singularly perturbed problems on square, circular, and elliptic domains. Singularly perturbed boundary value problems pose significant challenges due to sharp boundary layers in their solutions. Additionally, the characteristic points of these domains give rise to degenerate boundary layer problems. The stiffness of these problems, caused by sharp singular layers, can lead to substantial computational errors if not properly addressed. Conventional neural network-based approaches often fail to capture these sharp transitions accurately, highlighting a critical flaw in machine learning methods. To address these issues, we conduct a thorough boundary layer analysis to enhance our understanding of sharp transitions within the boundary layers, guiding the application of numerical methods. Specifically, we employ physics-informed neural networks (PINNs) to better handle these boundary layer problems. However, PINNs may struggle with rapidly varying singularly perturbed solutions in small domain regions, leading to inaccurate or unstable results. To overcome this limitation, we introduce a semi-analytic method that augments PINNs with singular layers or corrector functions. Our numerical experiments demonstrate significant improvements in both accuracy and stability, showcasing the effectiveness of our proposed approach.

\begin{comment}

This research investigates the numerical approximation of the two-dimensional convection-dominated singularly perturbed problem on square, circular, and elliptic domains. Singularly perturbed boundary value problems present a
significant challenge due to the presence of sharp boundary layers in their solutions. Additionally, the considered domain
exhibits characteristic points, giving rise to a degenerate boundary layer problem. The stiffness of the problem is attributed
to the sharp singular layers, which can result in substantial computational errors if not appropriately addressed.
o address these issues, a thorough boundary layer analysis is conducted, enhancing our understanding of the sharp transitions within the boundary layers and guiding the application of numerical methods.Traditional numerical methods typically require extensive mesh refinements near the boundary to achieve accurate solutions, which can be
computationally expensive. To address the challenges posed by singularly perturbed problems, we employ physics-informed
neural networks (PINNs). However, PINNs may struggle with rapidly varying singularly perturbed solutions over a small
domain region, leading to inadequate resolution and potentially inaccurate or unstable results. To overcome this limitation,
we introduce a semi-analytic method that augments PINNs with singular layers or corrector functions. Through our numerical
experiments, we demonstrate significant improvements in both accuracy and stability, thus demonstrating the effectiveness
of our proposed approach.
\end{comment}
\end{abstract}

\tableofcontents

\section{Introduction}
The use of neural networks for approximating solutions to differential equations has gained significant attention in recent research \cite{goswami2020transfer, jin2021nsfnets, kochkov2021machine, cuomo2022scientific}. Various unsupervised neural network methods have been developed in this domain, including physics-informed neural networks (PINNs) \cite{PINN001}, the deep Ritz method (DRM) \cite{yu2018deep}, and the Galerkin-based Neural Network \cite{ainsworth2022galerkin, choi2022unsupervised, CKH2024}. These methods share the common characteristic of defining the loss function based on the residual of the differential equation being considered. 
PINNs, in particular, utilize collocation points in the space-time domain as inputs, making them well-suited for solving complex, time-dependent, multi-dimensional equations with intricate domain geometries \cite{PINN002, PINN003, PINN004, PINN005, PINN006, PINN007, PINN008}. They have become popular in scientific machine learning, facilitating the integration of physics-based and data-driven modeling within a deep learning framework. 
However, the robustness of PINNs in certain problem types remains an ongoing concern. PINNs exhibit limitations in accurately capturing complex and highly nonlinear flow patterns, such as turbulence, vortical structures, and boundary layers \cite{pinn_lim, pinn_bl01, pinn_bl02, pinn_bl03}. Addressing these challenges is of great significance in scientific machine learning research, as developing robust and reliable models is crucial for advancing the field.

The robustness of PINNs is frequently tested when it comes to approximating boundary layers. Convergence issues for PINNs can arise in the context of singularly perturbed differential equations within bounded regions, particularly when the perturbation parameter is sufficiently small. These challenges also pose hurdles for operator learning approaches like DeepONet and FNO \cite{lu2021learning, li2020fourier}. Overcoming these limitations is crucial for achieving more accurate and reliable predictions across a wide range of PINN applications. 
Various techniques have recently been proposed to address the approximation of singular problems. Fourier feature networks \cite{wang2021eigenvector}, cPINN \cite{Nam03}, XPINN \cite{xpinn}, and similar approaches have emerged to tackle the spectral bias of deep neural networks, which limits their capacity to learn high-frequency functions. However, none of these techniques have been specifically designed to handle thin-layer problems.
A more recent study conducted by the authors of \cite{blpinn} proposed a theory-guided neural network to address boundary layer problems. This approach introduces a network architecture consisting of two interconnected networks: the inner and outer networks. The inner network focuses on capturing the asymptotic expansion approximation of the solution within the boundary layer region, while the outer network handles the approximation outside of the boundary layer. Both the inner and outer networks consist of multiple parallel PINN networks, with each network representing a specific order approximation of the solution. The coupling between the inner and outer networks is enforced through the matching boundary condition loss.

In this article, we introduce a novel semi-analytic machine learning approach to address boundary layer problems. Our framework draws inspiration from rigorous singular perturbation analysis and asymptotic expansion methods used in studying singularly perturbed differential equations. For general references, see, for example, \cite{bookSP} and the numerous cited articles therein. It is worth noting that boundary layer analysis for singular perturbation problems is extensively studied and well-established in applied mathematics, with significant contributions from the fluid dynamics community. The novelty of our approach lies in developing a version of PINNs that leverages boundary layer analysis to approximate sharp transitions. Our approach can derive explicit boundary layer profile functions that are theoretically guaranteed, which are then essentially used in our new PINN scheme. Building on techniques from boundary layer analysis, our work addresses multi-dimensional problems with complex domains.

A singularly perturbed boundary value problem, such as the one presented in equations \eqref{eq:main}, is widely recognized for inducing a thin layer near the boundary, commonly referred to as a boundary layer. Within this boundary layer, the solution undergoes a rapid transition. Extensive scholarly research has been conducted on the mathematical theory of singular perturbations and boundary layers. The numerical approximation of singular perturbation problems often faces significant computational errors near the boundary due to the stiffness of the solution within the boundary layer. To achieve accurate approximations, traditional methods usually require substantial mesh refinement near the boundary. However, instead of relying on extensive mesh refinements, incorporating a global basis function known as the corrector has proven highly effective. The corrector captures the singular behavior of the solution within the boundary layers, offering a more efficient and accurate approach to solving these problems. In this paper, we deliver the challenge posed by boundary layers by performing a boundary layer analysis for each singular perturbation problem. In particular, we provide a complete singular perturbation analysis for the elliptical domain case, which constitutes a new analytical result. Through this analysis, we identify the corrector function, which accurately captures the singular behavior within the boundary layer. Subsequently, we develop a new semi-analytic approach by constructing neural networks enriched with the corrector function, embedded within the structure of a two-layer PINN with hard constraints. We refer to this approach as a \textit{singular layer PINN (SL-PINN)}. To validate the effectiveness of our proposed method, we conduct numerical simulations for each example presented. The results confirm that our SL-PINNs naturally capture the singular behavior of boundary layers and provide highly accurate approximations for the singularly perturbed boundary value problems discussed in this article. This paper primarily focuses on studying the two-dimensional convection-diffusion equations in various domains, such as a unit square, circle, and ellipse, denoted by \(\Omega\):
\begin{equation} \label{eq:main}
\begin{split}
   L_{\epsilon}u^{\ep} := - \epsilon \Delta u^{\ep} - u^{\ep}_y & = f, \quad \text{  in  } \Omega,\\
    u^{\ep}  & = 0, \quad \text{  at  }\partial \Omega.
\end{split}    
\end{equation}
The objective of this study is to develop semi-analytical neural networks that can effectively incorporate physics-based information and improve their performance by integrating corrector functions. Our approach to constructing a simple two-layer neural network shares similarities with PINNs but emphasizes the inclusion of hard constraints to enforce boundary conditions. We utilize a straightforward neural network, \(\hat{u}\), multiplied by \(g(x, y)\) to satisfy the boundary condition as follows:
\begin{equation}\label{e:PINN}
    \overline{u}({x,y} ; \, {\blds \theta}) 
        =
           g({x,y}) \, 
            \hat{u}({ {{x,y}}}; \, {\blds \theta}), 
\end{equation}
where $\hat{u}$ is defined by the two-layer NN, with $g({x,y}) = 0$ on $\partial \Omega$.
We define a two-layer neural network
\begin{equation}\label{e:NN}
    \hat{u}({{{ x,y}}}; \, {\blds \theta})
    =
        \sum_{j=1}^{n}c_{i}\sigma(w_{1 j}x+w_{2 j}y+b_{j}),
\end{equation}
where $n$ is the number of neurons. 
The network parameters are denoted by
\begin{equation}\label{e:NN2}
    \blds \theta
    =
       (w_{1 1}, ...,w_{1  n},w_{2 1},...,w_{2 n},b_{1}, ...,b_{n},c_{1},...c_{n}),
\end{equation}
and we choose the logistic sigmoid as an activation function,
$
    \sigma(z) 
        =
            1/(1 + e^{-z}).
$
The PINNs with hard constraints described in \eqref{e:NN} utilize a simplified structure, allowing for the calculation of the loss function using explicit derivatives of \(\hat{u}\). This approach is particularly advantageous when dealing with boundary layers, as it helps avoid potential computational errors arising from sharp transitions. By using direct derivatives of exponential functions, we can further reduce potential learning errors. Our approach employs a two-layer neural network, which is not only computationally efficient but also sufficient for achieving accurate numerical approximations. Additionally, by incorporating symbolic computation, our methodology can be conveniently extended to an \(M\)-layer neural network. While traditional PINNs typically leverage an \(M\)-layer architecture, our SL-PINNs employ a two-layer structure. Despite this, SL-PINNs significantly outperform conventional PINNs. Future work will include conducting error analysis for the two-layer neural network, as Barron space is specifically designed for such networks.

\section{Channel Domain}

In this section, we investigate the boundary layer behavior of the singularly perturbed convection-diffusion problem (\ref{eq:main}) in a channel domain. As \(\ep\) becomes small, the solution to (\ref{eq:main}) exhibits rapid transitions near the boundary \(y=0\). To address this, we introduce a corrector through boundary layer analysis to validate our neural network-based approach. We present a series of numerical simulations as numerical evidence.

% We briefly introduce the following notation and consider the Sobolev space \(H^m(\Omega)\), where \(m\) is an integer. When \(m=0\), the space is \(L^2\):

% \[
% \begin{aligned}
%     u^{\epsilon} &: \text{true solution of (\ref{eq:main})}, \\
%     u^{0} &: \text{limit solution of (\ref{eq:main}), for } \varepsilon=0, \\
%     \varphi^{0} &: \text{corrector of (\ref{eq:main})}, \\
%     \|u\|_{H^m(\Omega)} &= \left( \sum_{|\alpha| \leq m} \int_{\Omega} |D^\alpha u|^2 \, dx \, dy \right)^{1/2}.
% \end{aligned}
% \]

\subsection{Boundary Layer Analysis}
We consider two-dimensional convection-diffusion equations in a channel domain as an introductory example. To emphasize the presence of the boundary layer, we employ periodic boundary conditions in the $x$ direction and zero boundary conditions in the $y$ direction:
\begin{equation} \label{e:square}
\begin{split}
   - \epsilon \Delta u^{\ep} - u^{\ep}_y & = f, \quad \text{  in  } \Omega=(0,1)\times(0,1), \\
     u^{\ep}  & = 0, \quad \text{  at  } y=0 \text{ and } y=1,\\
     u^{\ep} & \text{ is periodic in }  x.
\end{split}    
\end{equation}
When $\epsilon$ gets smaller in \eqref{e:square}, a boundary layer occurs near $y=0$. 
To address this issue, we first introduce the expansion \( {u^{\ep} \approx} u^0 + \varphi^0 \), where \( u^0 \) represents the outer expansion for the smooth part, and \(\varphi^0\) represents the inner expansion capturing the boundary layer. The outer component \( u^0 \) is obtained by examining the limit problem, setting \(\epsilon\) to zero in the governing equation:
\begin{equation} \label{e:square_limit}
\begin{split}
-u^{0}_y & = f \quad \text{  in  } \Omega,\\
u^{0} & =0 \text{ at } y=1,\\
u^{0} & \text{ is periodic in }  x.
\end{split}
\end{equation}
To determine \(\varphi^0\) for the inner expansion, we introduce the stretched variable \(\bar{y} = y / \epsilon\), treating it as an \(\mathcal{O}(1)\) quantity within the boundary layer. By formally identifying the dominant terms with respect to \(\epsilon\), we derive the corrector equation,
\begin{equation} \label{e:square_cor}
\begin{split}
\varphi^{0}_{\bar{y}\bar{y}}+\varphi^{0}_{\bar{y}} & =0 \quad \text{  in  } \Omega,\\
\varphi^{0} & =-u^{0}(x,0) \text{ at } \bar{y}=0, \\
\varphi^{0} & \text{ is periodic in }  x.
\end{split}
\end{equation}
One can easily find an explicit solution of \eqref{e:square_cor} 
\be \label{eq:sq_cor_sol}
    { {\varphi}^0}
    = -u^0(x,0) \frac{e^{-y/\ep} - e^{-1/\ep}}{1 - e^{-1/\ep}}
    = -u^0(x,0) e^{-y/\ep} + e.s.t., 
\ee
where $e.s.t.$ stands for an exponentially small term.
A convergence analysis for the boundary layer problem is essential to ensure that the proposed corrector accurately represents boundary layer behavior, thereby justifying and constructing the SL-PINN scheme. The convergence analysis for \eqref{e:square} was investigated in \cite{jung2005numerical}. To illustrate our approach to the numerical scheme, we build upon the results from \cite{jung2005numerical}, introducing the compatibility conditions and convergence theory that support our SL-PINN scheme. For more details, refer to \cite{bookSP} and \cite{jung2005numerical}. We assume the conditions:
\begin{equation}\label{compat:channel}
 \frac{\partial^j}{\partial y^j} u^0(0,1)= \frac{\partial^j}{\partial y^j} u^0(1,1) = 0, \quad \text{for } 0 \leq j \leq 1,
\end{equation}
\begin{equation}
f = f_{xx} = 0 \quad \text{at } x = 0, 1.
\end{equation}
Under these conditions, we obtain the convergence result:\\

\begin{theorem}\label{conver_thm_channel}
With the compatibility condition (\ref{compat:channel}), The following estimate holds:
\begin{equation}
|u^\varepsilon - u^0 - \varphi^0|_{L^2(\Omega)} + \sqrt{\varepsilon} |u^\varepsilon - u^0 - \varphi^0|_{H^1(\Omega)} \leq \kappa \varepsilon,
\end{equation}
where \(u^\varepsilon\), \(u^0\) is the solution of (\ref{e:square}), (\ref{e:square_limit}), and \({\varphi}^0\) is the corrector in (\ref{e:square_cor}).
\end{theorem}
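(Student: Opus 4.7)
The plan is to introduce the remainder $w^\varepsilon := u^\varepsilon - u^0 - \varphi^0$, derive the boundary value problem it satisfies, and then run a weighted $L^2$ energy estimate adapted to the non-coercive operator $L_\varepsilon = -\varepsilon\Delta - \partial_y$. The heart of the argument reduces to showing $\|L_\varepsilon w^\varepsilon\|_{L^2(\Omega)} \leq \kappa\varepsilon$ and then inverting $L_\varepsilon$ with a carefully chosen weight in $y$.

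First I would verify that $w^\varepsilon$ carries homogeneous data: the explicit formula (\ref{eq:sq_cor_sol}) gives $\varphi^0(x,0)=-u^0(x,0)$ and $\varphi^0(x,1)=0$ exactly, so combined with the Dirichlet conditions for $u^\varepsilon$ and $u^0$ one obtains $w^\varepsilon|_{y=0}=w^\varepsilon|_{y=1}=0$, and periodicity in $x$ is inherited. Next I would compute $L_\varepsilon w^\varepsilon$: from (\ref{e:square_limit}) one has $L_\varepsilon u^0 = f - \varepsilon \Delta u^0$, while applying (\ref{e:square_cor}) cancels the dominant $O(\varepsilon^{-1})$ terms in $L_\varepsilon \varphi^0$, leaving $L_\varepsilon \varphi^0 = \varepsilon\, u^0_{xx}(x,0)\,e^{-y/\varepsilon}+\mathrm{e.s.t.}$ Hence
\begin{equation*}
L_\varepsilon w^\varepsilon \;=\; \varepsilon\,\Delta u^0 \;-\; \varepsilon\, u^0_{xx}(x,0)\,e^{-y/\varepsilon} \;+\; \mathrm{e.s.t.}
\end{equation*}
The first summand is $O(\varepsilon)$ in $L^2(\Omega)$ using the regularity of $u^0$ secured by the compatibility hypotheses (\ref{compat:channel}) together with $f=f_{xx}=0$ at $x=0,1$; the second is $O(\varepsilon^{3/2})$ because $\|e^{-y/\varepsilon}\|_{L^2(0,1)}=O(\sqrt{\varepsilon})$; and the exponentially small term is negligible. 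Thus $\|L_\varepsilon w^\varepsilon\|_{L^2(\Omega)}\leq \kappa\varepsilon$.

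The main obstacle is that $L_\varepsilon$ is not coercive in $L^2$: testing $L_\varepsilon w^\varepsilon = R$ against $w^\varepsilon$ gives only $\varepsilon\|\nabla w^\varepsilon\|^2=\int R\,w^\varepsilon$, since the convective term integrates to zero by the boundary data of the previous step, and Poincar\'e then yields at best $\|w^\varepsilon\|_{L^2}=O(\sqrt{\varepsilon})$, which is too weak. I would therefore test against the weighted function $w^\varepsilon e^{\alpha y}$ for a fixed $\alpha>0$; integration by parts produces
\begin{equation*}
\varepsilon\!\int_\Omega |\nabla w^\varepsilon|^2 e^{\alpha y} \,+\, \Bigl(\tfrac{\alpha}{2}-\tfrac{\alpha^2\varepsilon}{2}\Bigr)\!\int_\Omega (w^\varepsilon)^2 e^{\alpha y} \;=\; \int_\Omega R\,w^\varepsilon e^{\alpha y},
\end{equation*}
where every boundary contribution vanishes thanks to the homogeneous data and periodicity. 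For $\varepsilon$ small the bracketed coefficient is bounded below by a positive constant; Cauchy--Schwarz and Young on the right-hand side then deliver
\begin{equation*}
\|w^\varepsilon\|_{L^2(\Omega)}^2 + \varepsilon\,\|\nabla w^\varepsilon\|_{L^2(\Omega)}^2 \;\leq\; \kappa\, \|L_\varepsilon w^\varepsilon\|_{L^2(\Omega)}^2 \;\leq\; \kappa\,\varepsilon^2,
\end{equation*}
and taking the square root yields the stated estimate. The delicate bookkeeping lies in the cross-terms generated by $-\varepsilon\Delta w^\varepsilon\cdot w^\varepsilon e^{\alpha y}$ and in tracking that the exponential weight is compatible with the periodicity in $x$ so that all boundary fluxes cancel.
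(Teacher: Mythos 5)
Your proposal is correct and follows essentially the same route the paper uses: the paper defers the channel case to the cited reference, but its own appendix proof of the analogous elliptical-domain theorem proceeds exactly as you do — set $w = u^\varepsilon - u^0 - \varphi^0$, compute the residual $L_\varepsilon w$ (with the corrector equation cancelling the $O(\varepsilon^{-1})$ terms), and close a weighted energy estimate by testing against $e^{y}w$ (your $e^{\alpha y}w$ with $\alpha=1$). Your residual computation, the $O(\varepsilon^{3/2})$ bound on the $\varepsilon\,u^0_{xx}(x,0)e^{-y/\varepsilon}$ term, and the coercivity constant $\tfrac{\alpha}{2}-\tfrac{\alpha^2\varepsilon}{2}$ all check out.
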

Under the compatibility conditions, Theorem (\ref{conver_thm_channel}) establishes that the solution of \eqref{e:square} converges to the limit solution \eqref{e:square_limit} in the \(L^2\) norm as \(\epsilon \rightarrow 0\). Building upon these results, we ensure that the corrector function effectively represents the boundary layer profile. Therefore, this corrector function can be seamlessly integrated into our neural network scheme, specifically the SL-PINN.

\subsection{Numerical Simulations}
% We modify the 2-layer PINN in \eqref{e:PINN} based on boundary layer analysis, incorporating the profile of the corrector described in \eqref{eq:sq_cor_sol}.
% Our new {\it SL-PINN} is proposed of the form,
Based on boundary layer analysis, we modify the 2-layer PINN in \eqref{e:PINN} to incorporate the corrector profile described in \eqref{eq:sq_cor_sol}. Our new SL-PINN is proposed in the form:
\begin{equation}\label{e:app_sol_enriched_con-def}
    \widetilde{u}(x,y ; \, {\blds \theta}) 
            =x(x-1)((y-1) \hat{u}(x,y , {\blds \theta}) +\hat{u}(x,0, {\blds \theta}) { \bar {\varphi}^0}),
\end{equation}
where $\bar{\varphi}^0$ is the modified corrector function given by $\bar{\varphi}^0=e^{-\frac{y}{\epsilon}}$, and $\hat{u}$ is defined in \eqref{e:PINN}.
The residual loss function is defined by
\begin{equation}
\begin{split}
    Loss=\left( \frac{1}{N}\sum_{i=0}^{N}| L_{\epsilon}(\widetilde{u}(x_i,y_i ; \, {\blds \theta}) )-f(x_i,y_i) |^{p}\right)^{1/p} \quad  \text{ where }(x_i,y_i) \in \Omega,
\end{split}    
\end{equation}
where $p = 1, 2$.
Given that the exponentially decaying function $\bar{\varphi}^0$ satisfies the corrector equation (\ref{e:square_cor}), we can explicitly derive the residual loss:
\begin{equation}  \label{e:square_loss}
\begin{split}
-\ep \Delta \widetilde{u} - \widetilde{u}_y - f
 & = -2\ep((y - 1)\hat{u}(x,y,{\blds \theta})+\hat{u}(x,0,{\blds \theta}){\bar{\varphi}}^0)-\ep (x^2-x)\hat{u}_{xx}(x,0,{\blds \theta}) {\bar{\varphi}}^0 \\
 &-2\ep((2x-1)(y-1)\hat{u}_{x}(x,y , {\blds \theta})+(x^2-x)\hat{u}_{y}(x,y , {\blds \theta}))\\
 &-2\ep(2x-1)\hat{u}_{x}(x,0,{\blds \theta}) {\bar{\varphi}}^0\\ 
 &-\ep (x^2-x)(y-1)(\hat{u}_{xx}(x,y , {\blds \theta})+\hat{u}_{yy}(x,y , {\blds \theta}))-(x^2 - x)(\hat{u}(x,y,{\blds \theta})\\
 &+(y-1)\hat{u}_{y}(x,y , {\blds \theta}))-f.
 \end{split}
 \end{equation}
We observe that the stiff terms in the residual loss are effectively eliminated by using the corrector equation. It is noteworthy that our SL-PINN provides accurate approximations, as all terms in \eqref{e:square_loss} remain bounded as $\epsilon$ approaches $0$, irrespective of the small parameter $\epsilon$.
Therefore, our SL-PINN can be effectively applied even when $\epsilon \approx 10^{-8}$, as shown in Table \ref{tab:1}. This is a regime where other neural network-based methods exhibit significant failures. 
\begin{figure}[htbp]
     \centering
     \begin{subfigure}[b]{0.4\textwidth}
         \centering
         \includegraphics[width=\textwidth]{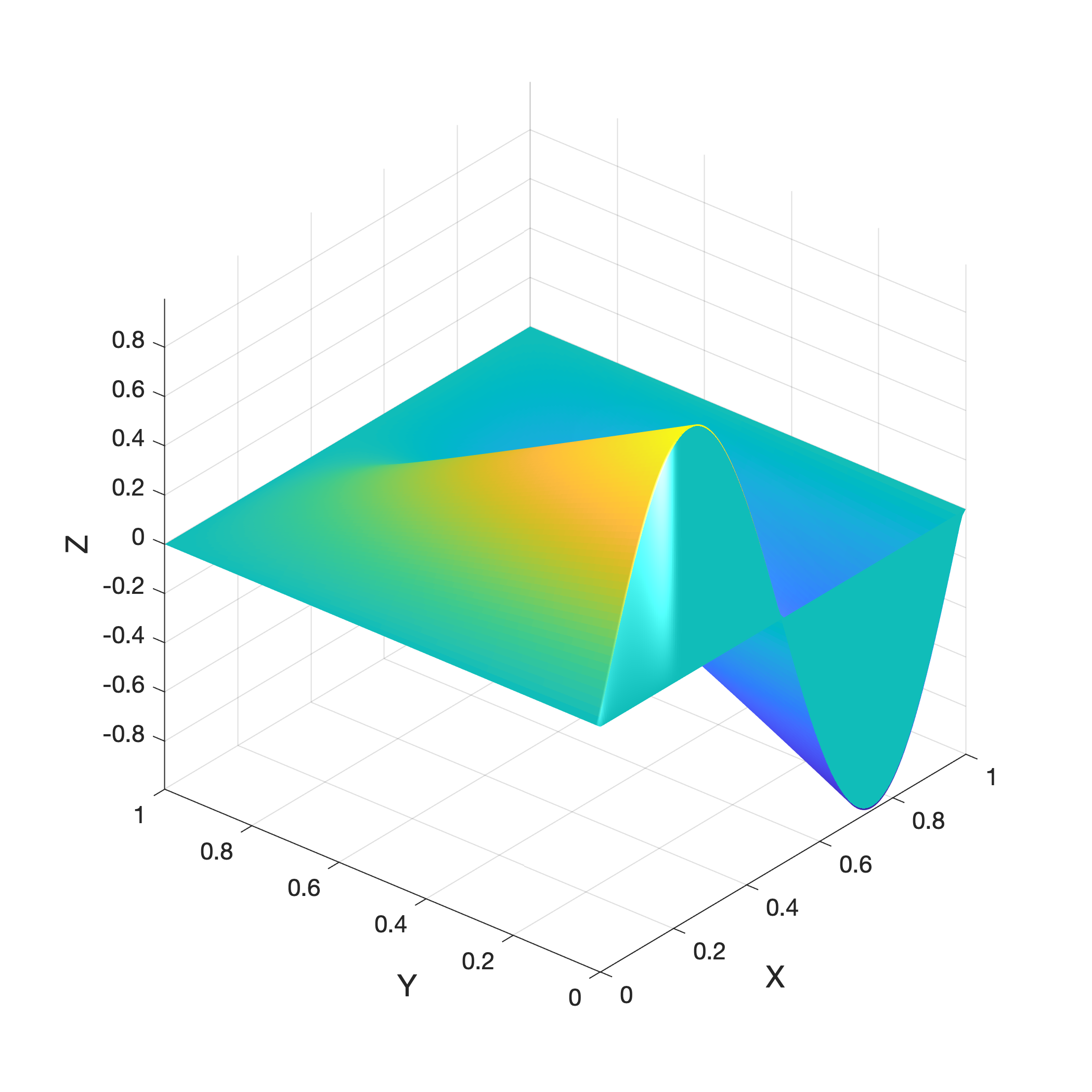}
         \caption{True solution}
         %\label{fig:three sin x}
     \end{subfigure}
     %\hfill
     \begin{subfigure}[b]{0.4\textwidth}
         \centering
         \includegraphics[width=\textwidth]{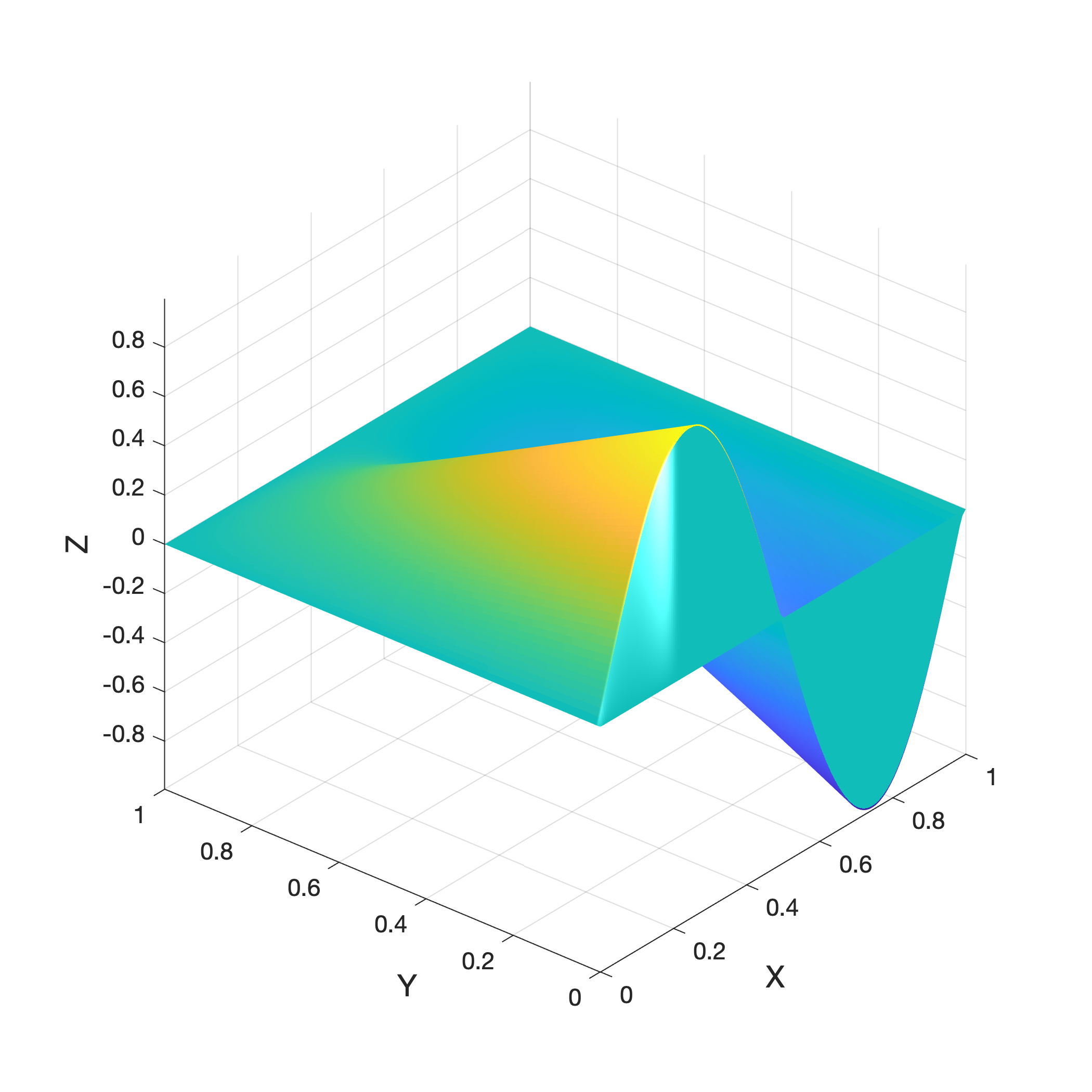}
         \caption{SL-PINN using $L^2$ training}
         %\label{fig:y equals x}
     \end{subfigure}
     %\hfill
     \begin{subfigure}[b]{0.4\textwidth}
         \centering
         \includegraphics[width=\textwidth]{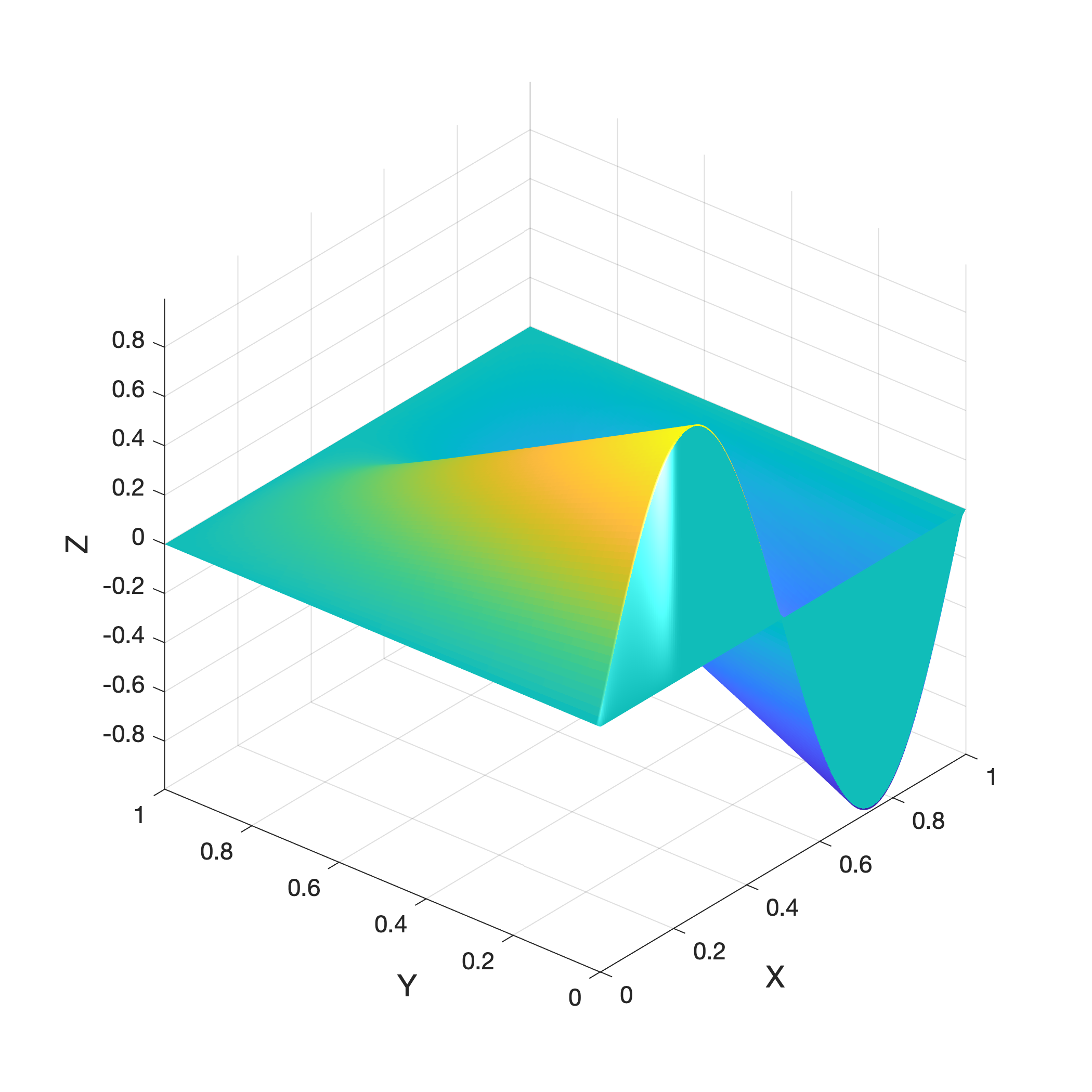}
         \caption{SL-PINN using $L^1$ training}
         %\label{fig:three sin x}
     \end{subfigure}
     %\hfill
     \begin{subfigure}[b]{0.4\textwidth}
         \centering
         \includegraphics[width=\textwidth]{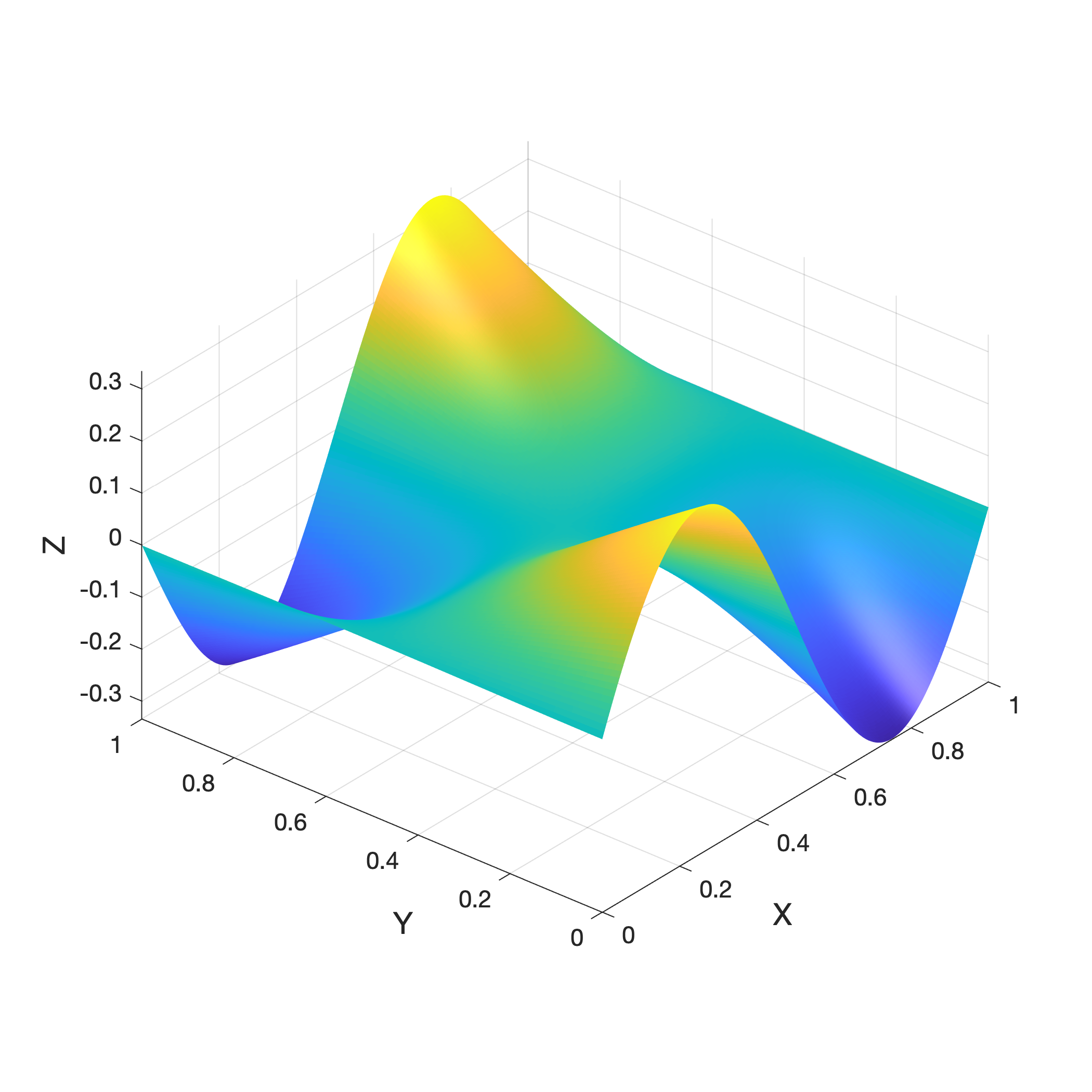}
         \caption{Conventional PINN}
         %\label{fig:three sin x}
     \end{subfigure}
        \caption{Numerical prediction of \eqref{e:square} with $\ep = 10^{-6}$ and $f=\sin(2 \pi x)$. For our simulations, we select a uniform grid of $50$ discretized points in each of the $x$ and $y$ directions.}\label{fig1}
\end{figure}

\begin{figure}[htbp]
\centering
\includegraphics[scale=0.4]{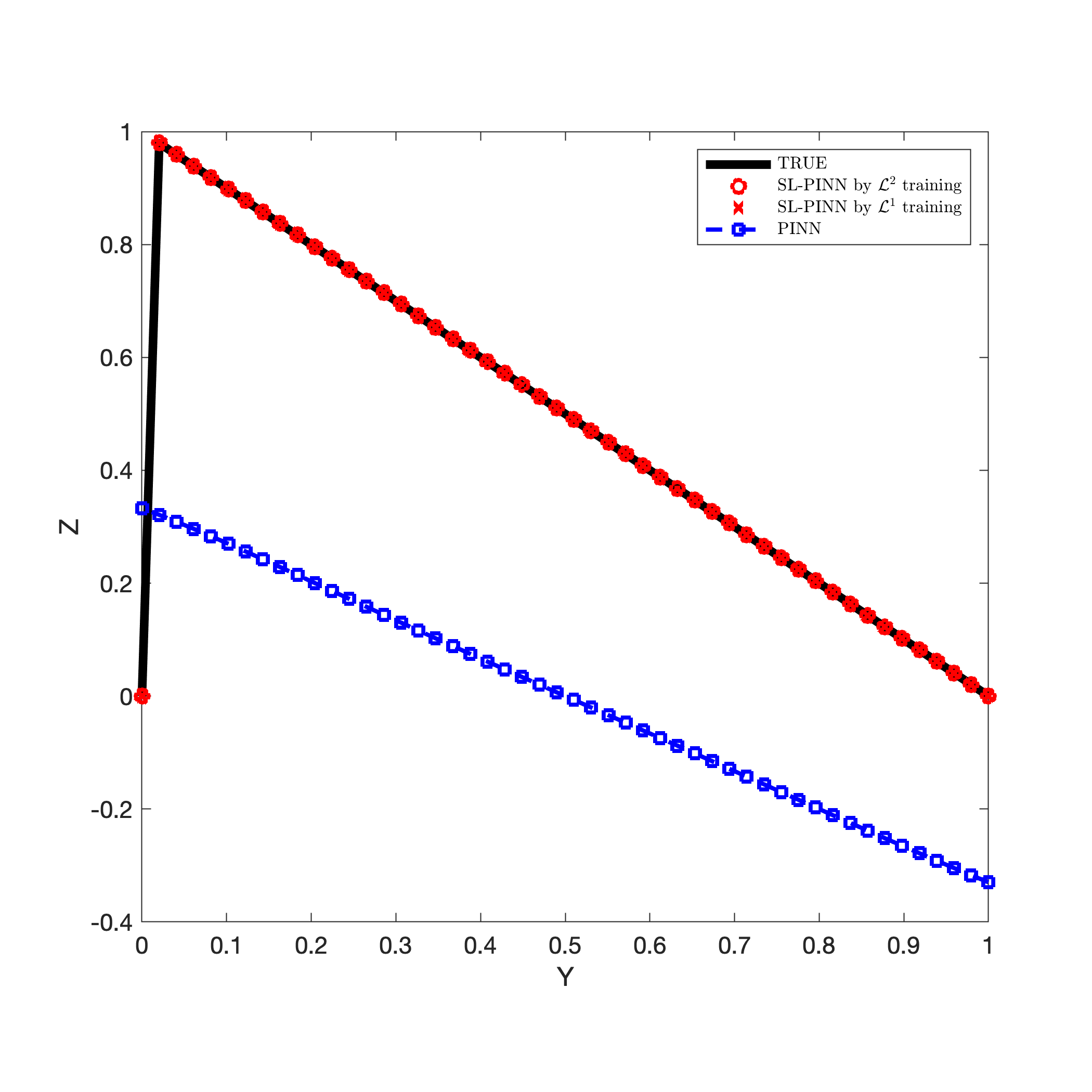}
\caption{The one-dimensional profile of predicted solutions along the line $x = 0.25$.} \label{fig1-1}
\end{figure}

% We compare the performance of the standard five-layer PINN approximation with our new SL-PINN approximation (only two layers). 
% Figure \ref{fig1} displays the numerical solutions of \eqref{e:square} with $\ep = 10^{-6}$ when $ f= \sin 2 \pi x$.
% Figure \ref{fig1} demonstrates that the conventional PINN method fails to approximate the solution of the singular perturbation problem.
% On the other hand, our new scheme, which utilizes only a 2-layer neural network with a small number of neurons, outperforms the conventional PINN. 
% The numerical results shown in Figure \ref{fig1} and Table \ref{tab:1} provide strong evidence that the SL-PINN significantly outperforms the conventional PINN method, thanks to the corrector function embedded in the scheme. 
% It is worth noting that our SL-PINN, enriched with the corrector, produces stable and accurate approximate solutions, regardless of the small parameter $\ep$.
% A closer examination can be made by referring to Figure \ref{fig1-1}. 
% The one-dimensional profile of predicted solutions at $x=0.25$ allows for a clear comparison. It is evident from Figure \ref{fig1-1} that the $L^1$ and $L^2$ training approaches provide accurate approximations. However, the conventional PINN falls short in capturing the sharp transition near the boundary layer.
We compare the performance of the standard five-layer PINN approximation with our new SL-PINN approximation, which utilizes only two layers. Figure \ref{fig1} presents the numerical solutions of \eqref{e:square} with $\epsilon = 10^{-6}$ and $f = \sin 2 \pi x$. The results in Figure \ref{fig1} demonstrate that the conventional PINN method fails to approximate the solution of the singular perturbation problem accurately. 
In contrast, our new scheme, employing a two-layer neural network with a small number of neurons, outperforms the conventional PINN. The numerical results shown in Figure \ref{fig1} and Table \ref{tab:1} provide strong evidence that the SL-PINN significantly outperforms the conventional PINN method, owing to the corrector function embedded in the scheme. Notably, our SL-PINN, enhanced with the corrector, produces stable and accurate approximate solutions regardless of the small parameter $\epsilon$.
A more detailed examination can be seen in Figure \ref{fig1-1}, which shows the one-dimensional profile of predicted solutions at $x=0.25$. This comparison clearly illustrates that the $L^1$ and $L^2$ training approaches provide accurate approximations. However, the conventional PINN method falls short in capturing the sharp transition near the boundary layer.

\section{Circular Domain}
In this section, we investigate the boundary layer behavior of the singularly perturbed convection-diffusion problem (\ref{eq:main}) in a circular domain, as well as in time-dependent and nonlinear cases. As \(\epsilon\) becomes small, the solution to these equations exhibits rapid transitions near the boundary \(\Gamma_{-} := \{(x, y) \mid x^2 + y^2 = 1, y \leq 0\}\). Additionally, unlike the channel domain, the circular domain introduces singularities at \((\pm 1, 0)\) where the upper semi-circle and lower semi-circle meet. To address these issues, we introduce a compatibility condition and employ a corrector through boundary layer analysis to validate our neural network-based approach. We present a series of numerical simulations as numerical evidence, demonstrating the robustness of our method even in non-compatible cases.

\subsection{Boundary Layer Analysis}

We shift our focus to the convection-diffusion equation within a circular domain. We start by examining the time-independent problem and then extend our study to the time-dependent equations.
We consider the convection-diffusion equation in a circular domain such that
\begin{equation} \label{e:circle_eq}
\begin{split}
   L_{\epsilon}u^{\ep} {:=}- \epsilon \Delta u^{\ep} - u^{\ep}_y & =f, \quad \text{  in  } \Omega=\{(x,y) | x^{2}+y^{2}< 1\}\\
    u^{\ep}  & = 0, \quad \text{  at  } \partial \Omega.
\end{split}    
\end{equation}
We begin by introducing the expansion \( {u^{\ep} \approx} u^0 + \varphi^0 \), where \( u^0 \) represents the outer expansion for the smooth part, and \(\varphi^0\) represents the inner expansion that captures the boundary layer.
To derive the corrector function from (\ref{e:circle_eq}), we begin by examining the limit problem, setting $\epsilon$ to $0$ in the governing equation:
\begin{equation} \label{e:circle_limit}
\begin{split}
L_{0}u^{0} :=-u^{0}_y & = f \quad \text{  in  } \Omega,\\
u^{0} & =0 \text{ at } \Gamma_{+},
\end{split}
\end{equation}
where $\Gamma_{+}:= \{ (x, y)|x^2 + y^2 = 1, y > 0\}$ is the upper semi-circle.
Note that the boundary condition in \eqref{e:circle_limit} is imposed only on the upper semi-circle. Consequently, we expect the boundary layer to form near the lower semi-circle {$\Gamma_{-}:= \{ (x, y)|x^2 + y^2 = 1, y \leq 0\}$ }.
The singularly perturbed problem described by equation \eqref{e:circle_eq} necessitates careful treatment in boundary layer analysis due to the presence of a degenerate boundary layer near the characteristic points at $(\pm 1,0)$ \cite{hong2014numerical}.
To develop the boundary layer analysis, we use boundary-fitted coordinates defined by \( x = (1 - \eta) \cos \tau \) and \( y = (1 - \eta) \sin \tau \), where \( 0 \leq \eta < 1 \) and \( \tau \in [0, 2\pi] \). In this context, \(\eta\) represents the normal component and \(\tau\) represents the tangential component.
By setting $u^{\ep}(x,y)=v^{\ep}(\eta,\tau)$, we derive that
\be
\begin{split}
P_{\epsilon} v^{\epsilon} 
    & := \frac{1}{(1-\eta)^2}
    \left(
    -\epsilon \left( v^{\epsilon}_{\tau \tau} - (1-\eta) v^{\epsilon}_{\eta} + (1-\eta)^{2} v^{\epsilon}_{\eta \eta} \right) 
    + (1-\eta)^{2} \sin \tau v^{\epsilon}_{\eta} - (1-\eta) \cos \tau v^{\epsilon}_{\tau}
    \right)\\
    & = f, \quad  \text{in } D, \\
v^{\epsilon}(0, \tau) & = 0, \quad \text{at } 0 \leq \tau \leq 2\pi,
\end{split}
\ee
where $D:=\{(\eta,\tau)| 0< \eta <1 , 0\leq \tau \leq 2\pi\}$.
Performing the boundary layer analysis as in \cite{hong2014numerical}, we are led to the following equation for the corrector equation
\be \label{e:cor:circle}
\begin{split}
    - \ep \varphi^0_{\eta \eta } + (\sin \tau) \varphi^0_{\eta}  & = 0, \quad \text{  for  } 0 < \eta < 1, \quad \pi < \tau < 2 \pi, \\
     \varphi^0  & = - u^0(\cos \tau, \sin \tau), \quad {\text{at } \eta=0}, \\
     \varphi^0  & \rightarrow 0 \quad \text{ as } \eta \rightarrow 1.
\end{split}
\ee
Hence, we can find an explicit solution of the correction equation \eqref{e:cor:circle}:
\be
\varphi^0 = -u^0(\cos\tau, \sin\tau)\exp\left(\frac{\sin \tau}{\ep} \eta \right) \chi_{[\pi, 2\pi]}(\tau),
\ee
where $\chi$ stands for the characteristic function.
To match the boundary condition in our numerical scheme, we introduce a cut-off function to derive an approximate form 
\be
\bar \varphi^0 = -u^0(\cos\tau, \sin\tau)\exp\left(\frac{\sin \tau}{\ep} \eta \right) \chi_{[\pi, 2\pi]}(\tau) \delta(\eta),
\ee
where $\delta(\eta)$ is a smooth cut-off function such that $\delta(\eta) = 1$ for $\eta \in [0, 1/2]$ and $=0$ for $eta \in [3/4, 1]$.
A convergence analysis for the boundary layer problem is essential to ensure that the proposed corrector accurately represents boundary layer behavior, thereby justifying the construction of the SL-PINN scheme. We briefly introduce the compatibility conditions and convergence theory supporting our SL-PINN scheme. For simplicity, we consider the first compatibility condition: 
\begin{equation}\label{compat:circle}
\frac{\partial^{p_{1}+p_{2}} f}{\partial x^{p_{1}} \partial y^{p_{2}}} = 0 \quad \text{at} \quad (\pm 1, 0) \quad \text{for} \quad 0 \leq 2p_{1} + p_{2} \leq 2 , \quad p_{1}, p_{2} \geq 0.
\end{equation}
For more details on compatibility conditions and a complete convergence analysis, see e.g., \cite{hong2014numerical}.
% \noindent \textbf{Lemma 3.1.} There exists a positive constant \( k \) such that, for integers \( l, n, s \geq 0, m = 0, 1, 2 \), and for \( 1 \leq p \leq \infty \),
% \begin{equation}
% \left| (\sin \tau)^{-l} \left( \frac{\eta}{\varepsilon} \right)^n  \frac{\partial^{s+m} \varphi^{0}}{\partial \eta^s \partial \tau^m} \right|_{L_p(D^*)} \leq \kappa \sup_{\tau} |a_{0,h}(\tau)| \varepsilon^{\frac{1}{p}-s},
% \end{equation}
% where \( h = -s + m + l + n + 1 \).
% The notation \( a_{0,q} \) follows the same convention as outlined in the \textit{Notation convection} of \cite{JUNG201188}.

% \noindent \textbf{Lemma 3.2.}If \( q \leq 1 \) or if the compatibility conditions (\ref{general compatibility condition}) hold for \( 2 + 3n \geq -2 + q \geq 0 \), then \( a_{0,q}(\tau) \) is bounded for \( \tau \in [\pi, 2\pi] \).

\begin{theorem}\label{conver_thm_circle}
    With the compatibility condition (\ref{compat:circle}), The following estimate holds:
\begin{equation}
|u^\varepsilon - u^0 - \bar{\varphi}^0|_{L^2(\Omega)} + \sqrt{\varepsilon} |u^\varepsilon - u^0 - \bar{\varphi}^0|_{H^1(\Omega)} \leq \kappa \sqrt{\varepsilon},
\end{equation}
where \(u^\varepsilon\), \(u^0\) is the solution of (\ref{e:circle_eq}), (\ref{e:circle_limit}), and \(\bar{\varphi}^0\) is the corrector in (\ref{e:cor:circle}).
\end{theorem}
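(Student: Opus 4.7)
The plan is to apply the classical energy method to the remainder $w^{\varepsilon} := u^{\varepsilon} - u^{0} - \bar{\varphi}^{0}$. First I would derive the PDE satisfied by $w^{\varepsilon}$: since $u^{\varepsilon}$ solves $L_{\varepsilon} u^{\varepsilon} = f$, $u^{0}$ solves $-u^{0}_{y} = f$ in $\Omega$, and $\bar{\varphi}^{0}$ is built from the truncated corrector equation \eqref{e:cor:circle}, a direct substitution yields
\begin{equation*}
L_{\varepsilon} w^{\varepsilon} = r^{\varepsilon} \quad \text{in } \Omega, \qquad w^{\varepsilon}\big|_{\partial\Omega} = 0,
\end{equation*}
where the residual $r^{\varepsilon}$ naturally splits into an \emph{outer} piece $\varepsilon \Delta u^{0}$ and an \emph{inner} piece collecting all terms in $L_{\varepsilon} \bar{\varphi}^{0}$ not annihilated by the corrector equation. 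To verify the homogeneous boundary condition for $w^{\varepsilon}$, I would check three things: the cut-off $\delta(\eta)$ kills $\bar{\varphi}^{0}$ for $\eta$ near the interior; the characteristic function $\chi_{[\pi,2\pi]}(\tau)$ localizes the corrector on $\Gamma_{-}$ where it matches $-u^{0}$ by construction; and the compatibility condition \eqref{compat:circle} forces $u^{0}$ to vanish to sufficient order at the characteristic points $(\pm 1, 0)$, so that no jump is produced across $\tau = \pi, 2\pi$.

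Next I would carry out the basic energy inequality by testing the PDE with $w^{\varepsilon}$. Integration by parts applied to the convective term gives $-(w^{\varepsilon}_{y}, w^{\varepsilon})_{L^{2}(\Omega)} = -\tfrac{1}{2}\oint_{\partial\Omega} (w^{\varepsilon})^{2} n_{y}\,ds = 0$ by the Dirichlet condition, while the diffusive term yields $\varepsilon |w^{\varepsilon}|_{H^{1}(\Omega)}^{2}$. Thus
\begin{equation*}
\varepsilon\, |w^{\varepsilon}|_{H^{1}(\Omega)}^{2} \;=\; (r^{\varepsilon},\, w^{\varepsilon})_{L^{2}(\Omega)}.
\end{equation*}
The rest of the argument is a quantitative bound on the right-hand side that produces the $\sqrt{\varepsilon}$ rate. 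Because the inner residual contains pieces which are only $O(\varepsilon^{1/2})$ in $L^{2}(\Omega)$ near the characteristic points, I would not pair directly via Cauchy--Schwarz; instead I would split $r^{\varepsilon} = r_{1}^{\varepsilon} + \partial_{\eta} F^{\varepsilon}$ into a regular $L^{2}$ remainder of size $O(\varepsilon)$ and a conservative part written as a normal derivative of a flux $F^{\varepsilon}$ of size $O(\varepsilon^{3/2})$. Integrating the conservative term by parts against $w^{\varepsilon}$ transfers one derivative onto $w^{\varepsilon}$; Young's inequality then absorbs $\varepsilon\,|w^{\varepsilon}|_{H^{1}}^{2}$ into the left-hand side, and Poincar\'e's inequality on $w^{\varepsilon}$ (which vanishes on all of $\partial\Omega$) closes the loop, producing simultaneously $|w^{\varepsilon}|_{H^{1}(\Omega)} \leq \kappa$ and $\|w^{\varepsilon}\|_{L^{2}(\Omega)} \leq \kappa \sqrt{\varepsilon}$ as required.

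The main obstacle, as expected for characteristic-point boundary layers, lies in controlling the inner residual $L_{\varepsilon} \bar{\varphi}^{0}$ near $(\pm 1, 0)$, where $\sin\tau$ changes sign and the exponential decay $\exp(\sin\tau\, \eta / \varepsilon)$ of the corrector degenerates. Differentiating $\bar{\varphi}^{0}$ in $\tau$ brings down factors like $\eta \cos\tau / \varepsilon$ and $\partial_{\tau} u^{0}(\cos\tau, \sin\tau)$, and in a neighbourhood of $\tau = \pi, 2\pi$ these are only marginally integrable against the slowly decaying exponential. The compatibility condition \eqref{compat:circle} is used precisely here: by forcing $u^{0}(\cos\tau, \sin\tau)$ to vanish to second order at the characteristic points, it damps the tangential derivatives of the corrector by exactly the powers of $\sin\tau$ needed to tame the singular integrals. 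Weighted $L^{2}$ estimates in the boundary-fitted chart $(\eta,\tau)$, along the lines developed in \cite{hong2014numerical}, then deliver the $O(\varepsilon)$ bound on $r_{1}^{\varepsilon}$ and the $O(\varepsilon^{3/2})$ bound on the flux $F^{\varepsilon}$, completing the proof.
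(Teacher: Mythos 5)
Your overall frame---setting $w=u^{\varepsilon}-u^{0}-\bar{\varphi}^{0}$, deriving $L_{\varepsilon}w=\varepsilon\Delta u^{0}-L_{\varepsilon}\varphi^{0}+L_{\varepsilon}(\varphi^{0}-\bar{\varphi}^{0})$ with $w|_{\partial\Omega}=0$, and locating the difficulty in the degenerate decay of the corrector near the characteristic points $(\pm1,0)$, where the compatibility condition \eqref{compat:circle} is needed---matches the paper's strategy: the text defers the circular case to the cited reference, but the Appendix carries out exactly this argument for the elliptical analogue. The genuine gap is in your energy step. Testing the equation with $w$ itself gives $(w_{y},w)_{L^{2}(\Omega)}=\tfrac12\oint_{\partial\Omega}w^{2}n_{y}\,ds=0$, so the convection term contributes nothing and your identity reads $\varepsilon|w|_{H^{1}}^{2}=(r^{\varepsilon},w)$ with no zeroth-order positive term on the left. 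From that identity Poincar\'e can only yield $\|w\|_{L^{2}}\le C|w|_{H^{1}}\le C$, because the Poincar\'e constant is independent of $\varepsilon$; it cannot manufacture the rate $\|w\|_{L^{2}}\le\kappa\sqrt{\varepsilon}$, so the $L^{2}$ half of the theorem is unreachable by your scheme as written. The missing idea is the weighted test function $e^{y}w$ (equivalently the substitution $w=e^{-y/2}\tilde w$): integrating by parts turns $-(w_{y},e^{y}w)$ into $\tfrac12\int_{\Omega}e^{y}w^{2}\,dx\,dy$, a coercive $L^{2}$ term, so the left-hand side becomes $\varepsilon|w|_{H^{1}}^{2}+c\|w\|_{L^{2}}^{2}$ and Cauchy--Schwarz plus Young against the $O(\sqrt{\varepsilon})$ residual delivers both estimates simultaneously. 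This is precisely how the Appendix proof of the elliptic version proceeds.

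A secondary inaccuracy concerns the size of the residual. After the corrector equation cancels the leading terms, the surviving pieces of $L_{\varepsilon}\varphi^{0}$ in the boundary-fitted chart---$\varepsilon\varphi^{0}_{\tau\tau}$ (up to metric factors), the commutator term proportional to $\sin\tau\,\eta\,\varphi^{0}_{\eta}$, and $\cos\tau\,\varphi^{0}_{\tau}$---are only $O(\sqrt{\varepsilon})$ in $L^{2}(\Omega)$, not $O(\varepsilon)$; this is exactly the content of the weighted estimates of the form $|(\sin\tau)^{-l}(\eta/\varepsilon)^{n}\partial_{\eta}^{s}\partial_{\tau}^{m}\varphi^{0}|_{L^{p}}\le\kappa\,\varepsilon^{1/p-s}$ proved in the Appendix, whose validity is where \eqref{compat:circle} enters (it bounds the coefficients $a_{0,q}$). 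Consequently your proposed splitting $r^{\varepsilon}=r_{1}^{\varepsilon}+\partial_{\eta}F^{\varepsilon}$ with $\|r_{1}^{\varepsilon}\|_{L^{2}}=O(\varepsilon)$ and $\|F^{\varepsilon}\|_{L^{2}}=O(\varepsilon^{3/2})$ is both stronger than what the corrector estimates provide and unnecessary: once the $e^{y}$ weight supplies $L^{2}$ coercivity, a plain Cauchy--Schwarz pairing of the $O(\sqrt{\varepsilon})$ residual against $\|w\|_{L^{2}}$ closes the argument without any divergence-form rewriting.
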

Under the compatibility conditions, theorem (\ref{conver_thm_circle}) demonstrates that the solution of \eqref{e:circle_eq} converges to the limit solution \eqref{e:circle_limit} in the \(L^2\) norm as \(\epsilon \rightarrow 0\). Building on these results, we ensure that the corrector function effectively represents the boundary layer profile. Consequently, this corrector function can be seamlessly integrated into our neural network scheme, specifically the SL-PINN.

\subsection{Numerical Simulations for Time-indepedent problem} \label{subsec:plain}

We now establish the semi-analytic SL-PINN method as
\begin{equation}\label{e:EPINN_scheme_circle01}
    \widetilde{v}(\eta,\tau ; \, {\blds \theta}) 
            =(\hat{v}(\eta,\tau, {\blds \theta})-\hat{v}(0,\tau, {\blds \theta}) {\hat{ \varphi}^0})C(\eta,\tau),
\end{equation}
where $\hat{ \varphi}^0$ is given by 
\begin{equation}
     \hat{ \varphi}^0=\exp\left(\frac{\sin \tau}{\ep} \eta \right) \chi_{[\pi, 2\pi]}(\tau) \delta(\eta),
\end{equation}
and %$C(\eta,\tau)$ is given by
\begin{equation} \label{e:circle_C_term}
\begin{split}
    C(\eta,\tau)=\begin{cases}
    1-(1-\eta)^3 , \text{ if } 0 < \tau < \pi,\\
    1-(1-\eta)^3-((1-\eta)\sin\tau)^3, \text{ if } \pi \leq \tau \leq 2\pi,
    \end{cases}
\end{split}
\end{equation}
and $\hat{v}(\eta,\tau , {\blds \theta}) = \hat{u}(x,y , {\blds \theta})$.
Then, the residual loss is defined by
\begin{equation} \label{e:circle_loss_first}
\begin{split}
    Loss= \left( \frac{1}{N} \sum_{i=0}^{N} \left| P_{\epsilon}\widetilde{v}(\eta_{i}, \tau_{i}; \boldsymbol{\theta}) - f \right|^{p} \right)^{1/p} \quad \text{for} \ (\eta_{i}, \tau_{i}) \in D,
\end{split}    
\end{equation}
where $p=1,2$.
Since the boundary layer behavior occurs near the lower semi-circle, we divide the residual loss \eqref{e:circle_loss_first} into two parts: $0 < \tau < \pi$ and $\pi \leq \tau \leq 2 \pi$.
For the case of $0 < \tau < \pi$, the calculation of the residual loss is straightforward:

\begin{equation} \label{e:DD1c}
\begin{split}
    & P_{\epsilon}\widetilde{v}((\eta,\tau, {\blds \theta})) - f \\
    & =
    -\ep \left(\frac{1}{(1-\eta)^2} - (1-\eta)\right) \hat{v}_{\tau \tau}(\eta, \tau, {\blds \theta}) 
    + \left((1-\eta)^2 - \frac{1}{(1-\eta)}\right) \cos(\tau) \hat{v}_{\tau}(\eta, \tau, {\blds \theta}) \\
    & \qquad -\ep (1 - (1-\eta)^3) \hat{v}_{\eta \eta}(\eta, \tau, {\blds \theta}) \\
    & \qquad - \left(6\ep + \ep (1-\eta)^2 - \frac{\ep}{(1-\eta)} - \sin(\tau)(1 - (1-\eta)^3)\right) \hat{v}_{\eta}(\eta, \tau, {\blds \theta}) \\
    & \qquad + \left(3\ep + \frac{6\ep}{(1-\eta)} + 3 (1-\eta)^2 \sin(\tau)\right) \hat{v}(\eta, \tau, {\blds \theta}) - f,
\end{split}
\end{equation}
where $0 \leq \eta < 1 $ and $0 < \tau < \pi.$
However, for \(\pi \leq \tau \leq 2 \pi\), the introduction of the boundary layer element in \eqref{e:EPINN_scheme_circle01}, along with the inclusion of the regularizing term in \eqref{e:circle_C_term}, complicates the residual loss as follows:
\begin{equation}
\begin{split}
    & P_{\epsilon}\widetilde{v}(\eta,\tau, {\blds \theta}) - f \\
    & = -\ep \left( \frac{1}{(1-\eta)^2} - (1-\eta) - (1-\eta) \sin^3(\tau) \right) \hat{v}_{\tau \tau}(\eta, \tau, {\blds \theta}) \\
    & \quad + \left( 6 \ep (1-\eta) \sin^2(\tau) \cos(\tau) + \left( (1-\eta)^2 - \frac{1}{(1-\eta)} + (1-\eta)^2 \sin^3(\tau) \right) \cos(\tau) \right) \hat{v}_{\tau}(\eta, \tau, {\blds \theta}) \\
    & \quad -\ep \left( 1 - (1-\eta)^3 - (1-\eta)^3 \sin^3(\tau) \right) \hat{v}_{\eta \eta}(\eta, \tau, {\blds \theta}) \\
    & \quad - \left( 6\ep + \ep (1-\eta)^2 - \frac{\ep}{(1-\eta)} + 7\ep (1-\eta)^2 \sin^3(\tau) + (1-\eta)^3 \sin^3(\tau) - (1 - (1-\eta)^3) \sin(\tau) \right) \hat{v}_{\eta}(\eta, \tau, {\blds \theta}) \\
    & \quad + \left( 3\ep + \frac{6\ep}{(1-\eta)} + 6 \ep (1-\eta) \sin(\tau) + 6 (1-\eta)^2 \sin(\tau) \right) \hat{v}(\eta, \tau, {\blds \theta}) \\
    & \quad - \hat{v}(0, \tau, {\blds \theta}) \Big[ \ep \left( C(\eta, \tau) \delta_{\eta \eta} - \left( (1-\eta) C(\eta, \tau) - 2 C_{\eta}(\eta, \tau) \right) \delta_{\eta} \right. 
     \quad \left.  -\left( (1-\eta) C_{\eta}(\eta, \tau) - C_{\eta \eta}(\eta, \tau) \right) \delta \right) \\
    & \quad - C(\eta, \tau) \sin(\tau) \delta_{\eta} + C_{\eta}(\eta, \tau) \sin(\tau) \delta \Big] \exp\left( \frac{\sin \tau}{\ep} \eta \right) \\
    & \quad + \frac{\delta}{(1-\eta)^2} \Big[ (\ep \hat{v}_{\tau \tau}(0, \tau, {\blds \theta}) + 1+\eta) \cos(\tau) \hat{v}_{\tau}(0, \tau, {\blds \theta}) - \eta \sin(\tau) \hat{v}(0, \tau, {\blds \theta})) C(\eta, \tau) \\
    & \quad + \left( 2\ep \hat{v}_{\tau}(0, \tau, {\blds \theta}) + (1+\eta)) \cos(\tau) \hat{v}(0, \tau, {\blds \theta}) \right) C_{\tau}(\eta, \tau) + \ep \hat{v}(0, \tau, {\blds \theta}) C_{\tau \tau}(\eta, \tau) \Big] \exp\left( \frac{\sin \tau}{\ep} \eta \right) \\
    & \quad + C(\eta, \tau) \hat{v}(0, \tau, {\blds \theta}) \frac{\delta(\eta)}{(1-\eta)^2} \left( \frac{\eta \cos^2(\tau)}{\ep} \right) \exp\left( \frac{\sin \tau}{\ep} \eta \right) - f,
\end{split} \label{eq:circle:complex}
\end{equation}
where $0\leq \eta < 1$ and $\pi \leq \tau \leq 2\pi$.

To make the computation in \eqref{eq:circle:complex} feasible, we extract the largest order term in $\ep$, which includes $O(\ep^{-1})$ such that
\begin{equation}
    \psi(\eta,\tau,{\blds \theta}) := (1-(1-\eta)^3-(((1-\eta)\sin\tau)^3)\hat{v}(0,\tau,{\blds \theta})\frac{\delta(\eta)}{(1-\eta)^2} \left(\frac{\eta}{\ep}\cos^2\tau \right) \exp\left(\frac{\sin \tau}{\ep} \eta \right){\chi_{[\pi, 2\pi]}(\tau)}.
\end{equation}
When $p=1$, the loss in \eqref{e:circle_loss_first} becomes, by the triangular inequality, 
\begin{equation}\label{e:bound00}
 Loss  \leq \frac{1}{N}\sum_{i=0}^{N}|\psi (\eta_{i},\tau_{i},{\blds \theta})|
 +  \frac{1}{N}\sum_{i=0}^{N} |P_{\epsilon}\widetilde{v}((\eta_{i},\tau_{i},{\blds \theta}))-\psi (\eta_{i},\tau_{i},{\blds \theta})-f|.
\end{equation}
The rightmost term does not include a large order term such as \(\ep^{\alpha}\) (where \(\alpha < 0\)) since \(\psi\) contains large terms like \(\ep^{-1}\). Hence, this part can be computed using conventional \(L^1\) or \(L^2\) loss. 
{Assume that \(\boldsymbol{\theta}\) is fixed and that we select a sufficiently large number of sampling points \(N\),
\begin{equation}\label{e:normal_bound01}
 \frac{1}{N}\sum_{i=0}^{N} \left|P_{\epsilon}\widetilde{v}(\eta_{i}, \tau_{i}, \boldsymbol{\theta}) - \psi(\eta_{i}, \tau_{i}, \boldsymbol{\theta}) - f\right| \approx \frac{1}{\pi}\int_{0}^{2\pi} \int_{0}^{1} \left|P_{\epsilon}\widetilde{v}(\eta, \tau, \boldsymbol{\theta}) - \psi(\eta, \tau, \boldsymbol{\theta}) - f\right| \, d\eta \, d\tau < \infty.
\end{equation}}
Given that \(\psi\) includes \(\mathcal{O}(\epsilon^{-1})\), we handle \(\psi\) using the \(L^1\) loss with the bound:

\[ \label{known_upper_bound}
    \left | \frac{1}{\epsilon} \exp\left( \frac{\sin \tau}{\epsilon} \eta \right){\chi_{[\pi, 2\pi]}(\tau)}\right| _{L^{1}(D)} < C,
\]
where \(C\) is a generic constant. Assume we choose a sufficiently large number of sampling points \(N\) such that
\begin{equation}\label{e:bound01}
\begin{split}
     & \frac{1}{N}\sum_{i=0}^{N}|\psi(\eta_{i},\tau_{i},{\blds \theta})| \approx \frac{1}{\pi}\int^{2\pi}_{\pi} \int^{1}_{0} |\psi(\eta,\tau,{\blds \theta})| d\eta d\tau < C.
\end{split}
\end{equation}
Hence, we obtain that
\begin{equation}\label{e:bound03}
\begin{split}
     \frac{1}{\pi}\int^{2\pi}_{\pi} \int^{1}_{0} | \psi(\eta,\tau,{\blds \theta})| d\eta d\tau
     & < C + C\ep e^{-\frac{1}{2\ep}}.
\end{split}
\end{equation}
Therefore, the loss \eqref{e:circle_loss_first} becomes nearly constant due to \eqref{e:bound03} and \eqref{e:normal_bound01} when \(\epsilon\) is sufficiently small. This simplification makes our computations feasible.

\begin{figure}[H]
     \centering
     \begin{subfigure}[b]{0.4\textwidth}
         \centering
         \includegraphics[width=\textwidth]{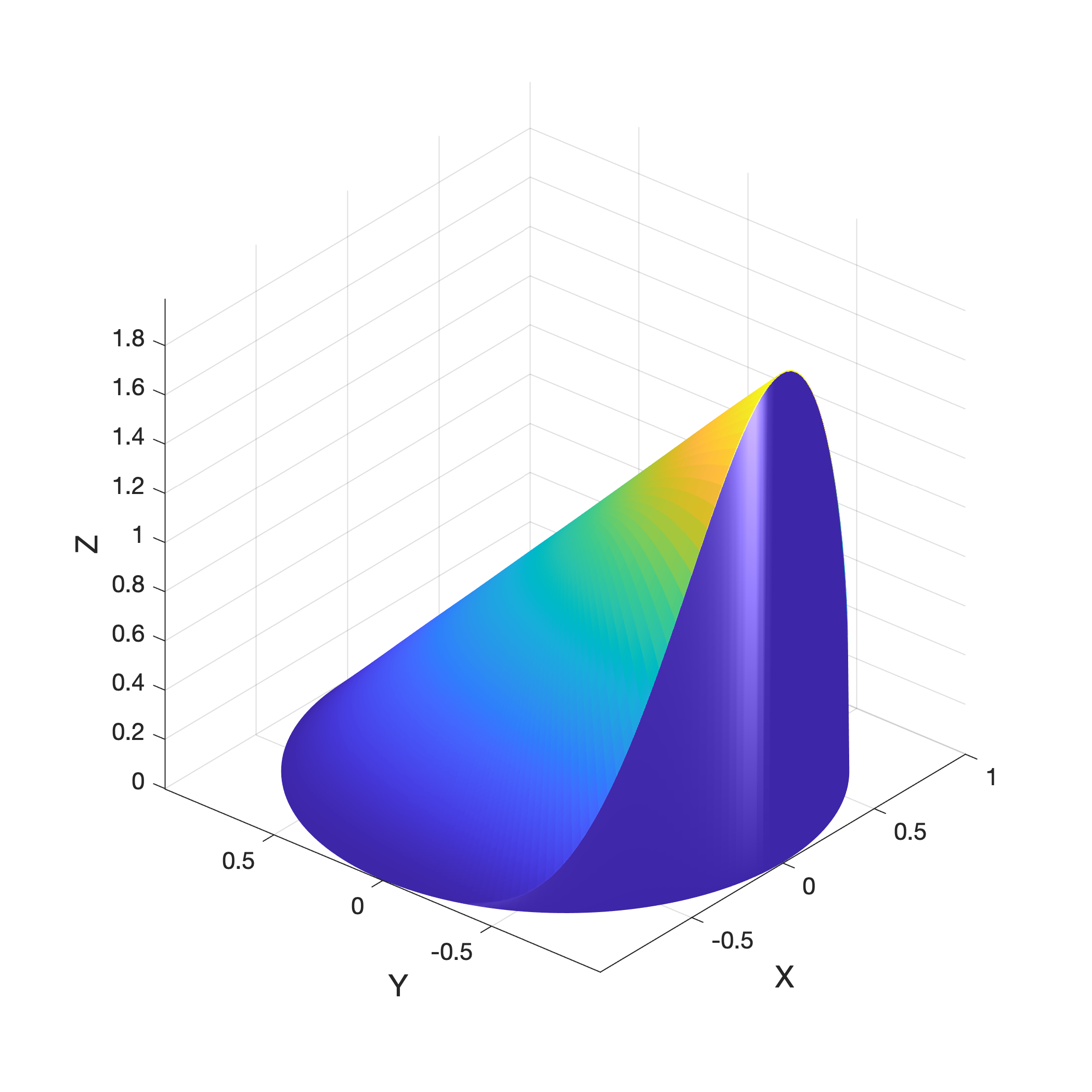}
         \caption{True solution}
         %\label{fig:three sin x}
     \end{subfigure}
     %\hfill
     \begin{subfigure}[b]{0.4\textwidth}
         \centering
         \includegraphics[width=\textwidth]{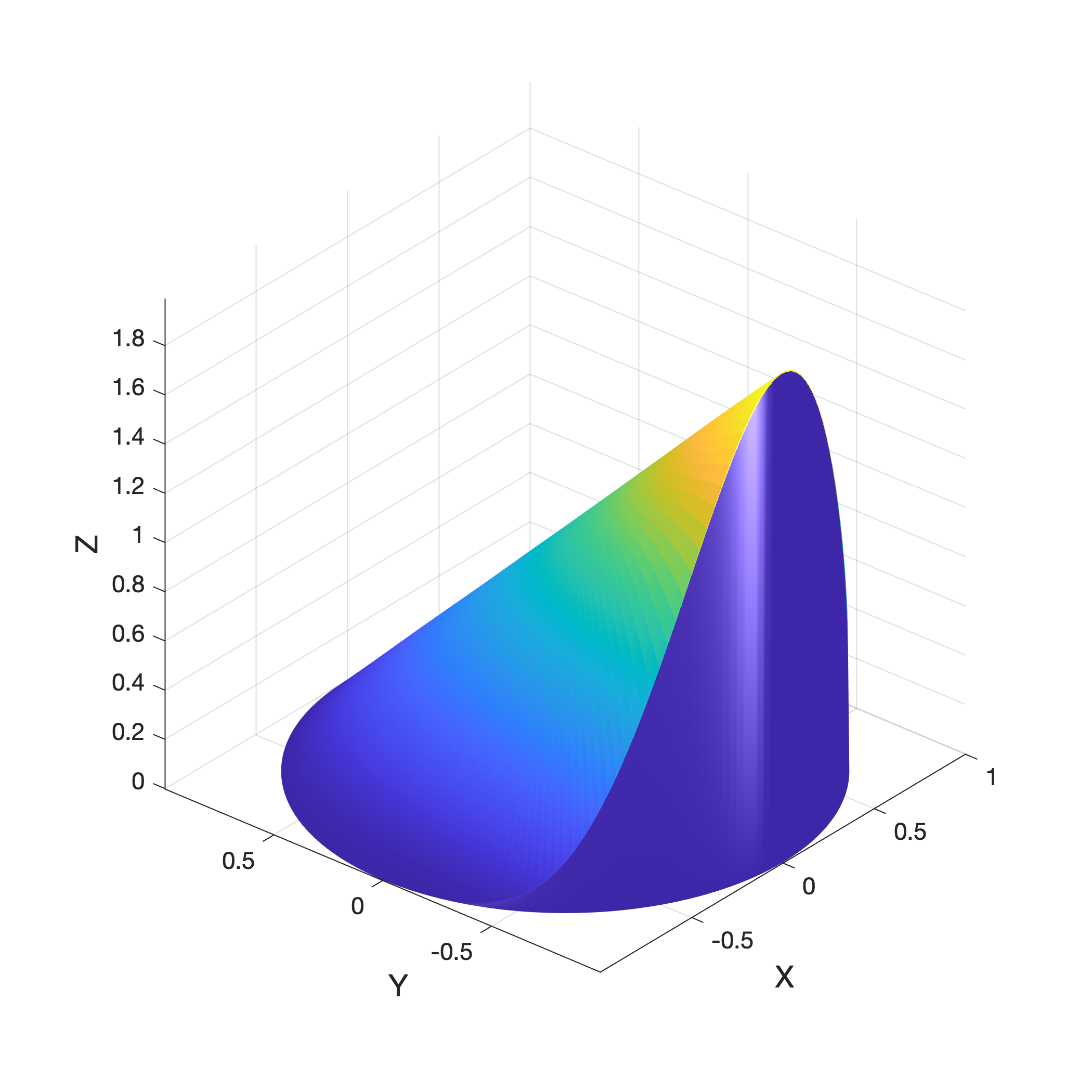}
         \caption{SL-PINN using $L^2$ training}
         %\label{fig:y equals x}
     \end{subfigure}
     %\hfill
     \begin{subfigure}[b]{0.4\textwidth}
         \centering
         \includegraphics[width=\textwidth]{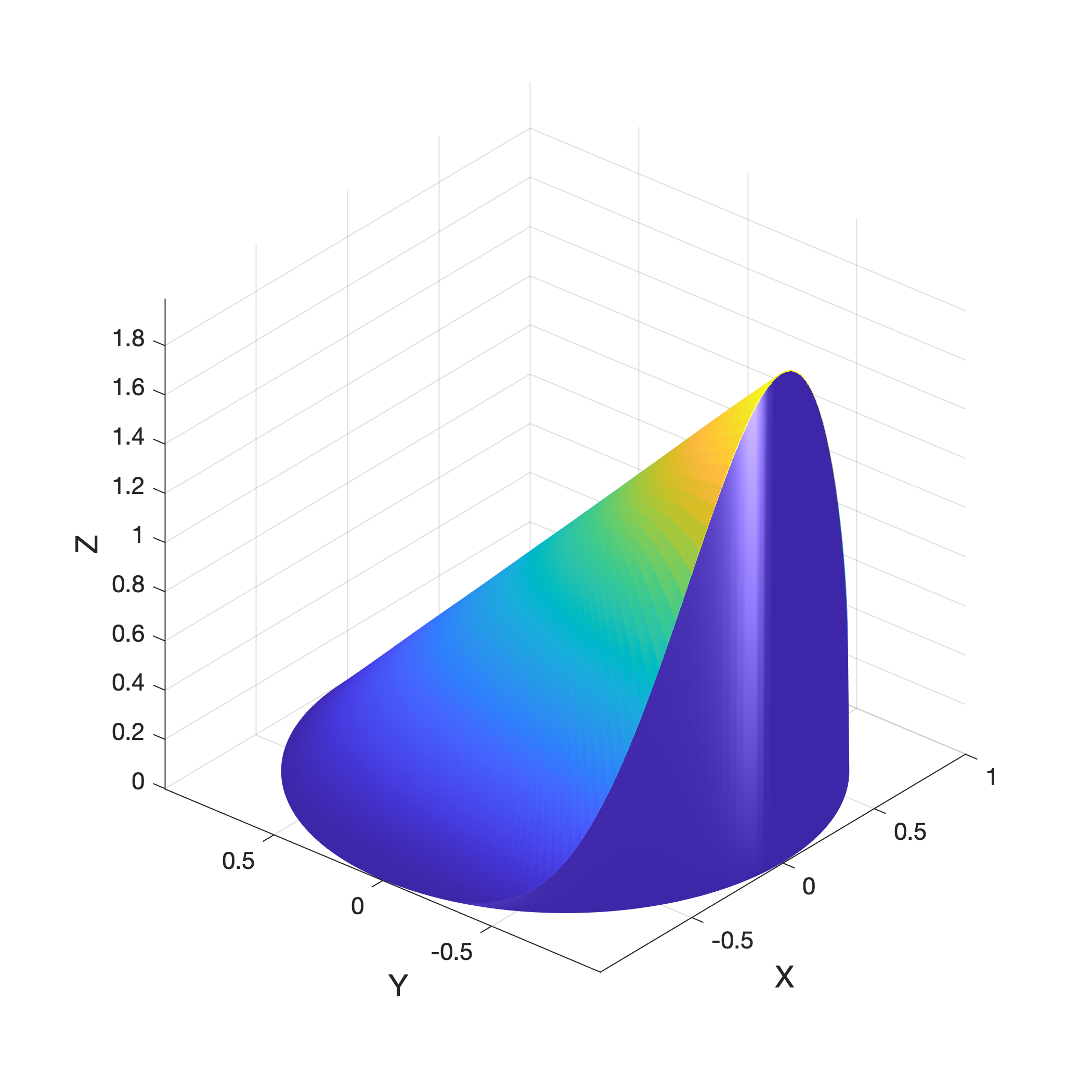}
         \caption{SL-PINN using $L^1$ training}
         %\label{fig:three sin x}
     \end{subfigure}
     %\hfill
     \begin{subfigure}[b]{0.4\textwidth}
         \centering
         \includegraphics[width=\textwidth]{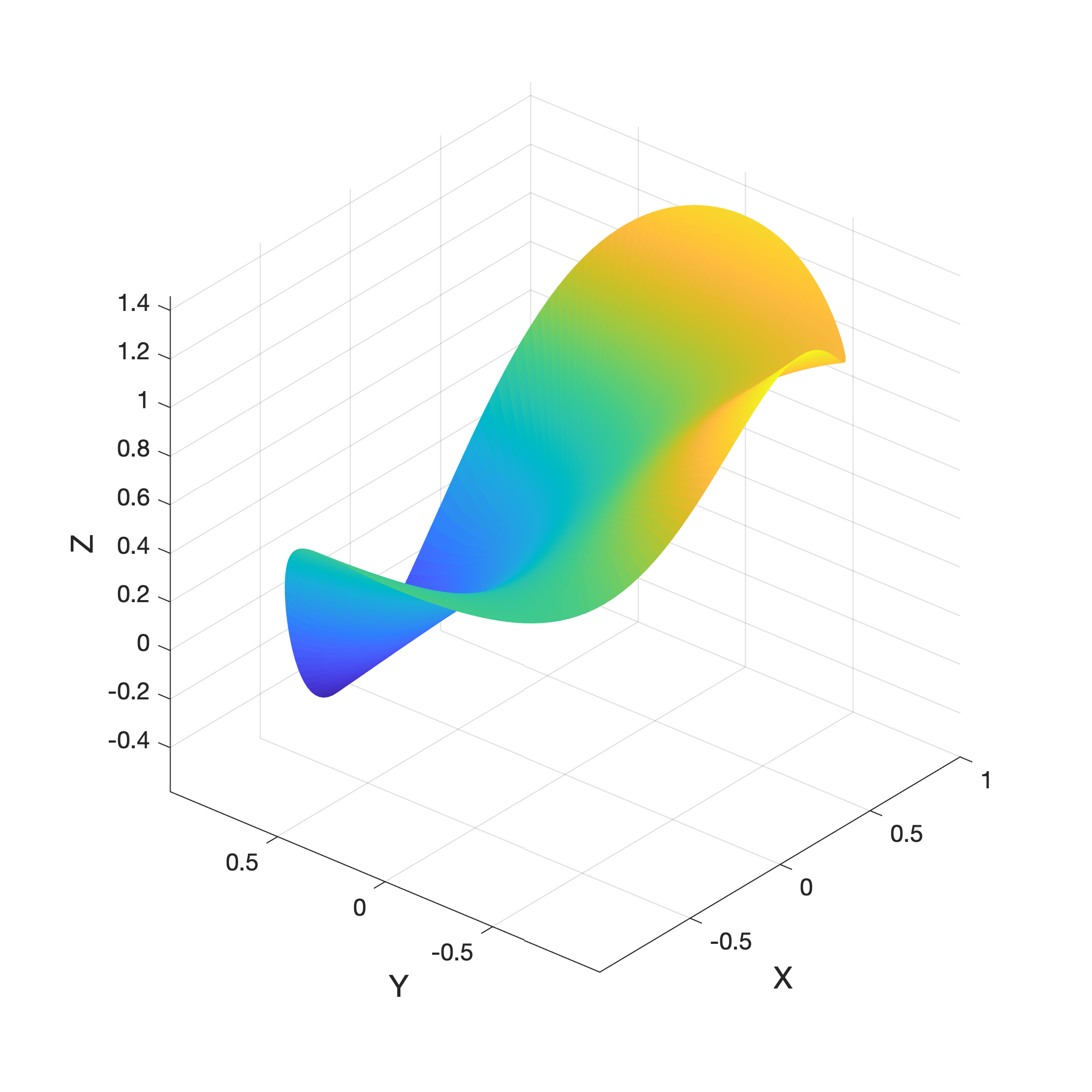}
         \caption{Conventional PINN}
         %\label{fig:three sin x}
     \end{subfigure}
        \caption{Numerical prediction of \eqref{e:circle_eq} with $\ep = 10^{-6}$ and $f=(1-x^2)^2$. For our simulations, we select a uniform grid of $50$ discretized points in each of the $\eta$ and $\tau$ directions.}\label{fig2}
\end{figure}

The performance of our new approach is demonstrated through a series of numerical experiments, as shown in Figures (\ref{fig2})-(\ref{fig4}). For these experiments, we use 50 uniformly discretized grid points in both the \(\tau\) and \(\eta\) directions.
We compare the performance of the standard five-layer PINN approximation with our SL-PINN approximation. Figure \ref{fig2} displays the numerical solutions of \eqref{e:circle_eq} with \(\epsilon = 10^{-6}\) when \(f = (1-x^2)^2\). Figure \ref{fig2} clearly illustrates the failure of the conventional PINN method in approximating the solution to the singular perturbation problem. In contrast, our SL-PINN significantly outperforms the conventional PINN method.
The numerical results depicted in Figure \ref{fig2} and Table \ref{tab:1} provide compelling evidence that the semi-analytic SL-PINN significantly outperforms the conventional PINN method. This improvement is attributed to the incorporation of a corrector function within the scheme. Our SL-PINN, enhanced with the corrector, consistently generates stable and accurate approximate solutions, regardless of the small parameter \(\ep\).
\begin{figure}%[htbp]
\centering
\includegraphics[scale=0.4]{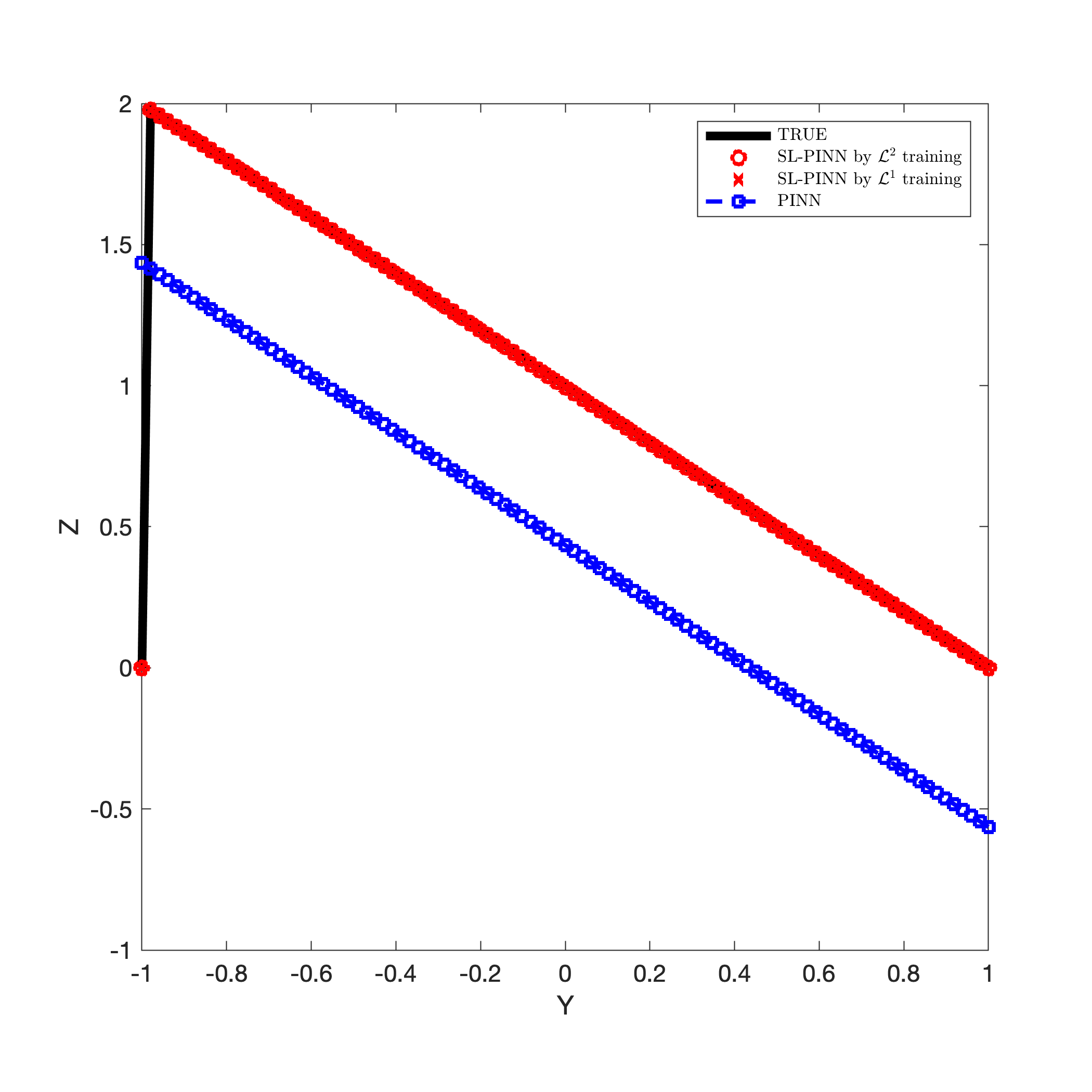}
\caption{The one-dimensional profile of predicted solutions along the line $\tau = \pi/2$.} \label{fig2-2}
\end{figure}
For a more detailed analysis, Figure \ref{fig2-2} provides a closer examination of the one-dimensional profile of predicted solutions at \(\tau = \pi/2\), enabling a clear and direct comparison. Figure \ref{fig2-2} demonstrates that both the \(L^1\) and \(L^2\) training approaches yield highly accurate approximations. However, the conventional PINN method fails to capture the sharp transition near the boundary layer.

To delve deeper into the subject, we consider a non-compatible case where \(f=1\), as shown in Figure \ref{fig3}. As mentioned earlier, this non-compatible case introduces an additional singularity when \(f\) does not vanish at the characteristic points \((\pm 1, 0)\). Analyzing the boundary layer in this scenario requires careful treatment, as discussed in \cite{JT13}. However, our SL-PINN approach effectively resolves the boundary layer behavior, overcoming the limitations posed by theoretical singularities.
We compare the performance of the standard five-layer PINN approximation with our novel SL-PINN approximation. Figure \ref{fig3} clearly demonstrates the failure of the conventional PINN method in accurately approximating the solution to the singular perturbation problem. In contrast, our SL-PINN, utilizing a two-layer neural network with a small number of neurons, significantly outperforms the conventional PINN method. The numerical results presented in Figure \ref{fig3} and Table \ref{tab:1} provide compelling evidence of the superior performance of our SL-PINN approach over the conventional PINN method.
For a closer examination, Figure \ref{fig3-3} provides a detailed one-dimensional profile of the predicted solutions at \(\tau=\pi/2\), enabling a clear and direct comparison. Figure \ref{fig3-3} shows the highly accurate approximations obtained through both the \(L^1\) and \(L^2\) training approaches. However, the conventional PINN method falls short in capturing the sharp transition near the boundary layer.

\begin{figure}[H]
     \centering
     \begin{subfigure}[b]{0.4\textwidth}
         \centering
         \includegraphics[width=\textwidth]{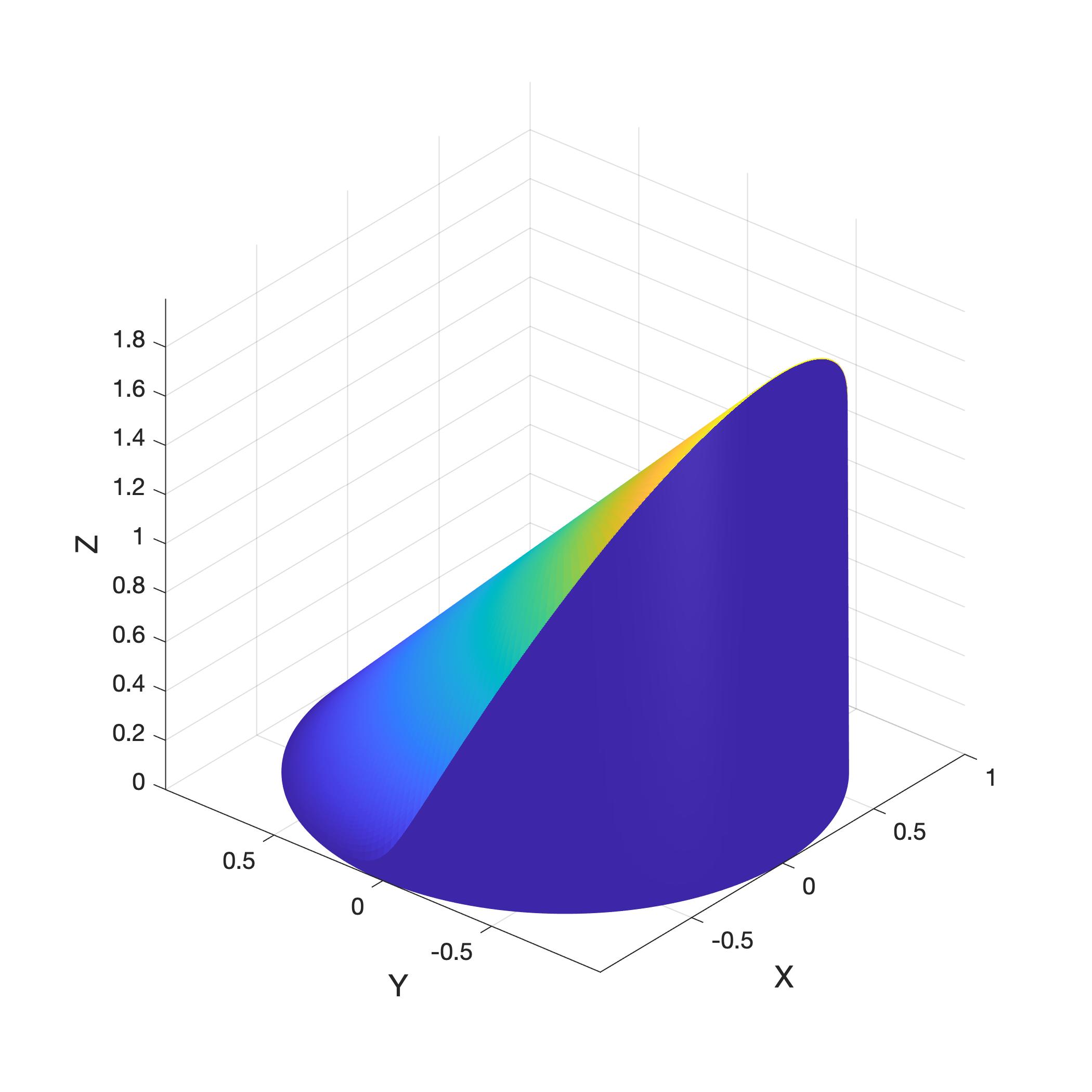}
         \caption{True solution}
         %\label{fig:three sin x}
     \end{subfigure}
     %\hfill
     \begin{subfigure}[b]{0.4\textwidth}
         \centering
         \includegraphics[width=\textwidth]{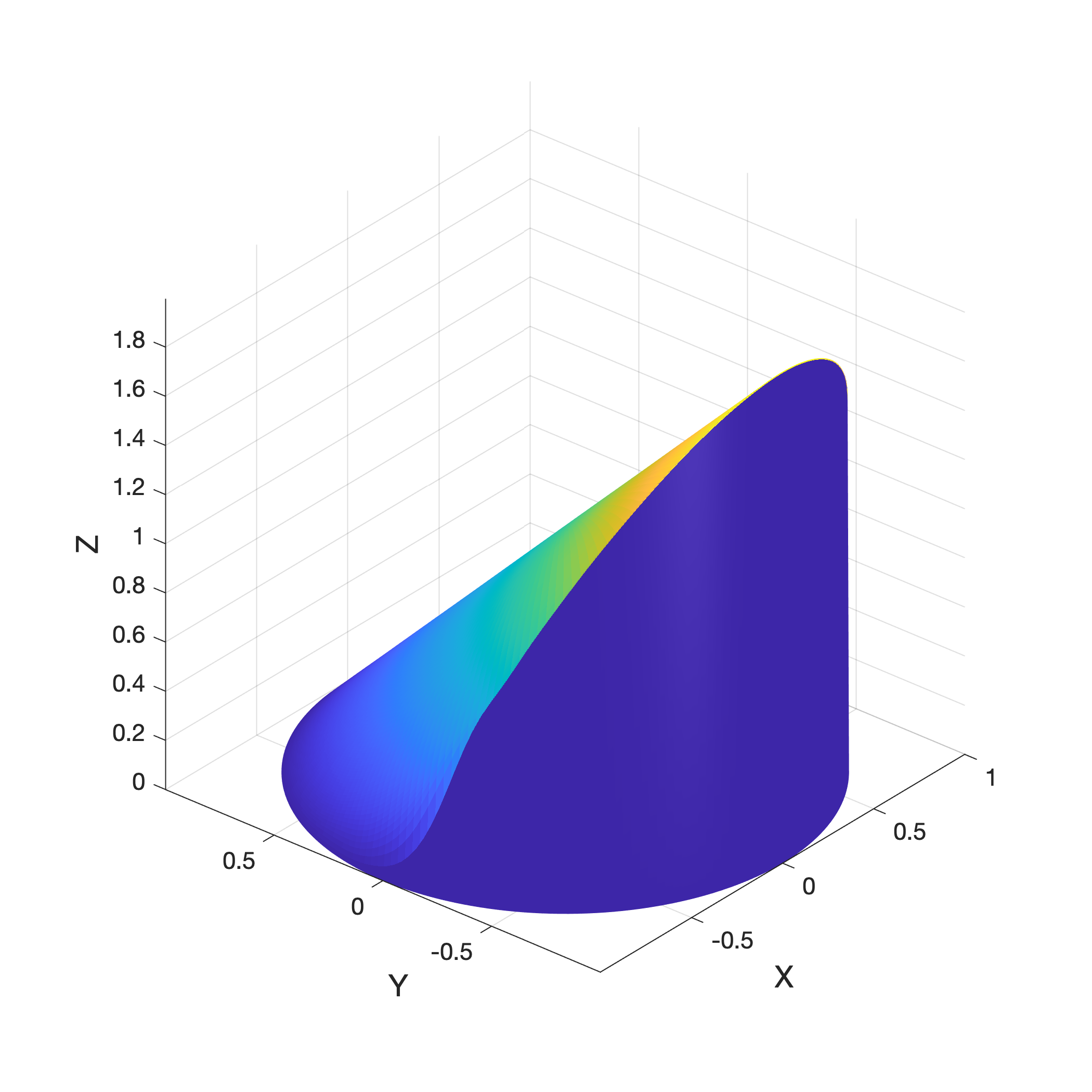}
         \caption{SL-PINN using $L^2$ training}
         %\label{fig:y equals x}
     \end{subfigure}
     %\hfill
     \begin{subfigure}[b]{0.4\textwidth}
         \centering
         \includegraphics[width=\textwidth]{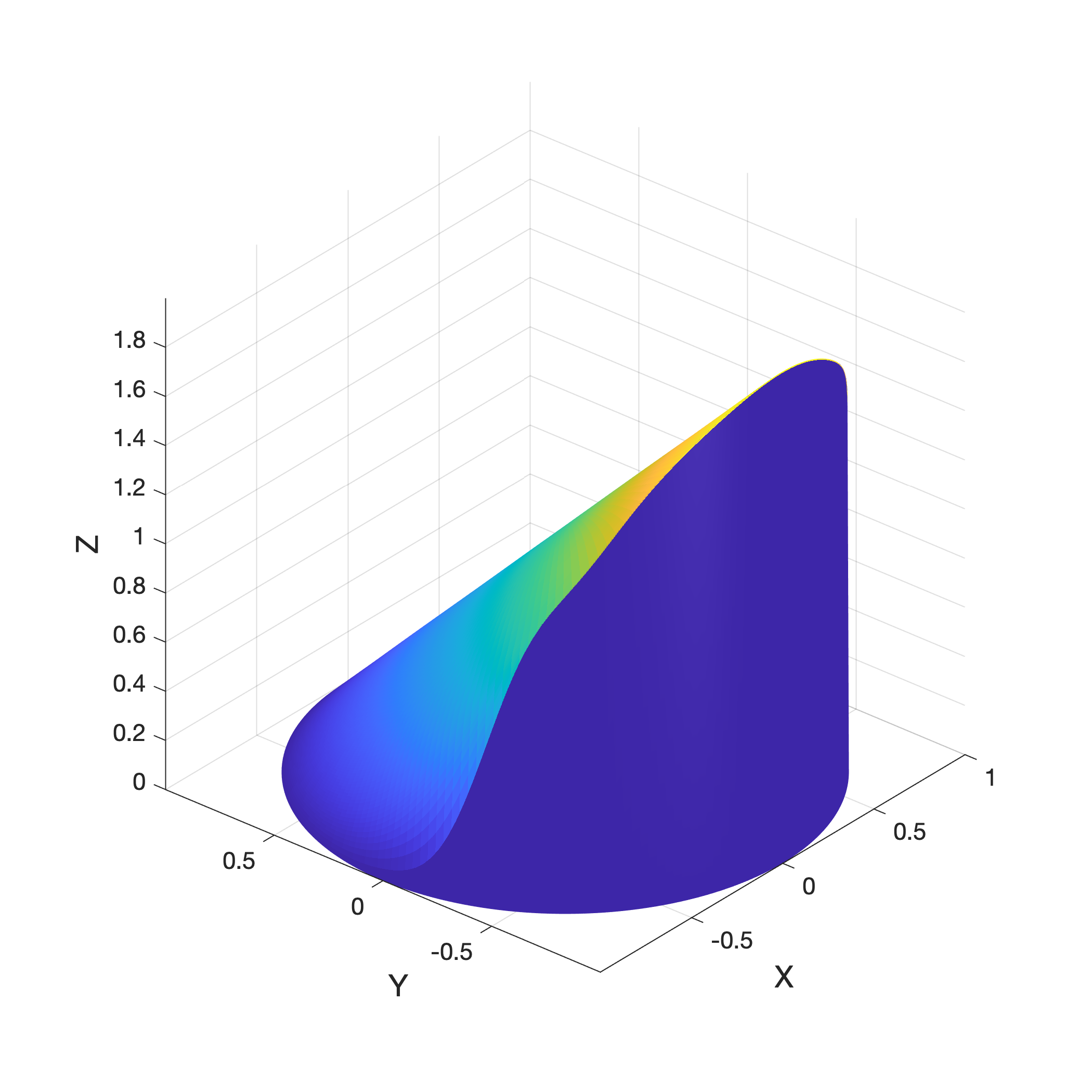}
         \caption{SL-PINN using $L^1$ training}
         %\label{fig:three sin x}
     \end{subfigure}
     %\hfill
     \begin{subfigure}[b]{0.4\textwidth}
         \centering
         \includegraphics[width=\textwidth]{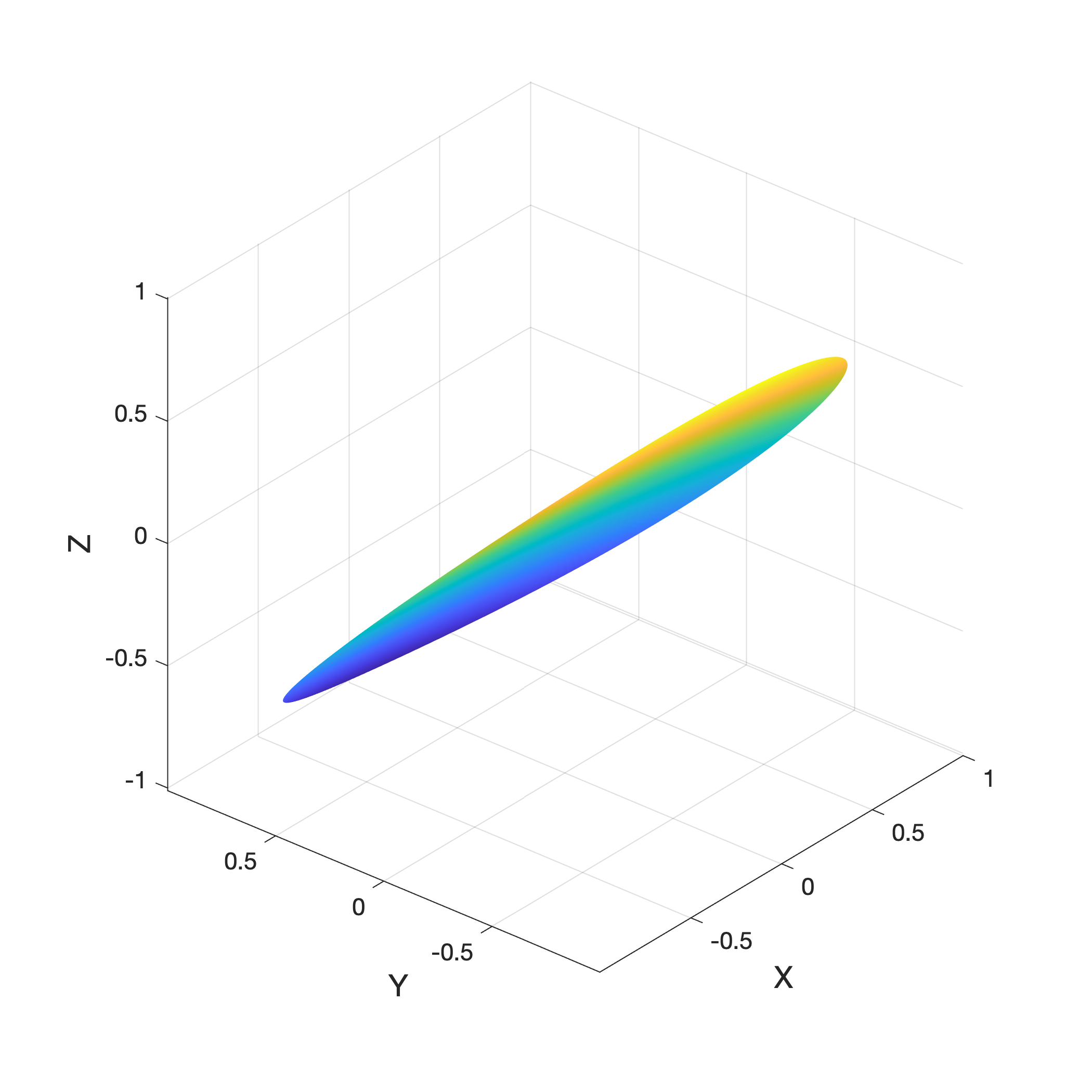}
         \caption{Conventional PINN}
         %\label{fig:three sin x}
     \end{subfigure}
        \caption{Numerical prediction of \eqref{e:circle_eq} with $\ep = 10^{-6}$ and $f=1$ (non-compatible case). For our simulations, we select a uniform grid of $50$ discretized points in each of the $\eta$ and $\tau$ directions.}\label{fig3}
\end{figure}

As an illustrative example, we apply our SL-PINN method to tackle a complex solution profile:
\be \label{e:f_oscillation}
    u^{\ep}=-A(x)(y+\sqrt{1-x^2})(1-e^{\frac{-2A(x)\sqrt{1-x^2}}{\epsilon}})+2A(x)(\sqrt{1-x^2})(1-e^{\frac{-A(x)(y+\sqrt{1-x^2})}{\epsilon}})
\ee 
In this case, the corresponding forcing term can be obtained through direct computation. The exact solution exhibits multiple humps, leading to an oscillatory numerical solution, as shown in Figure \ref{fig4}. Despite this complexity, our SL-PINN method effectively captures both the solution profile and the sharp transition near the boundary layer. We have excluded numerical experiments with the conventional PINN approach, as it consistently fails to produce an accurate solution profile.

\begin{figure}[H]
\centering
\includegraphics[scale=0.4]{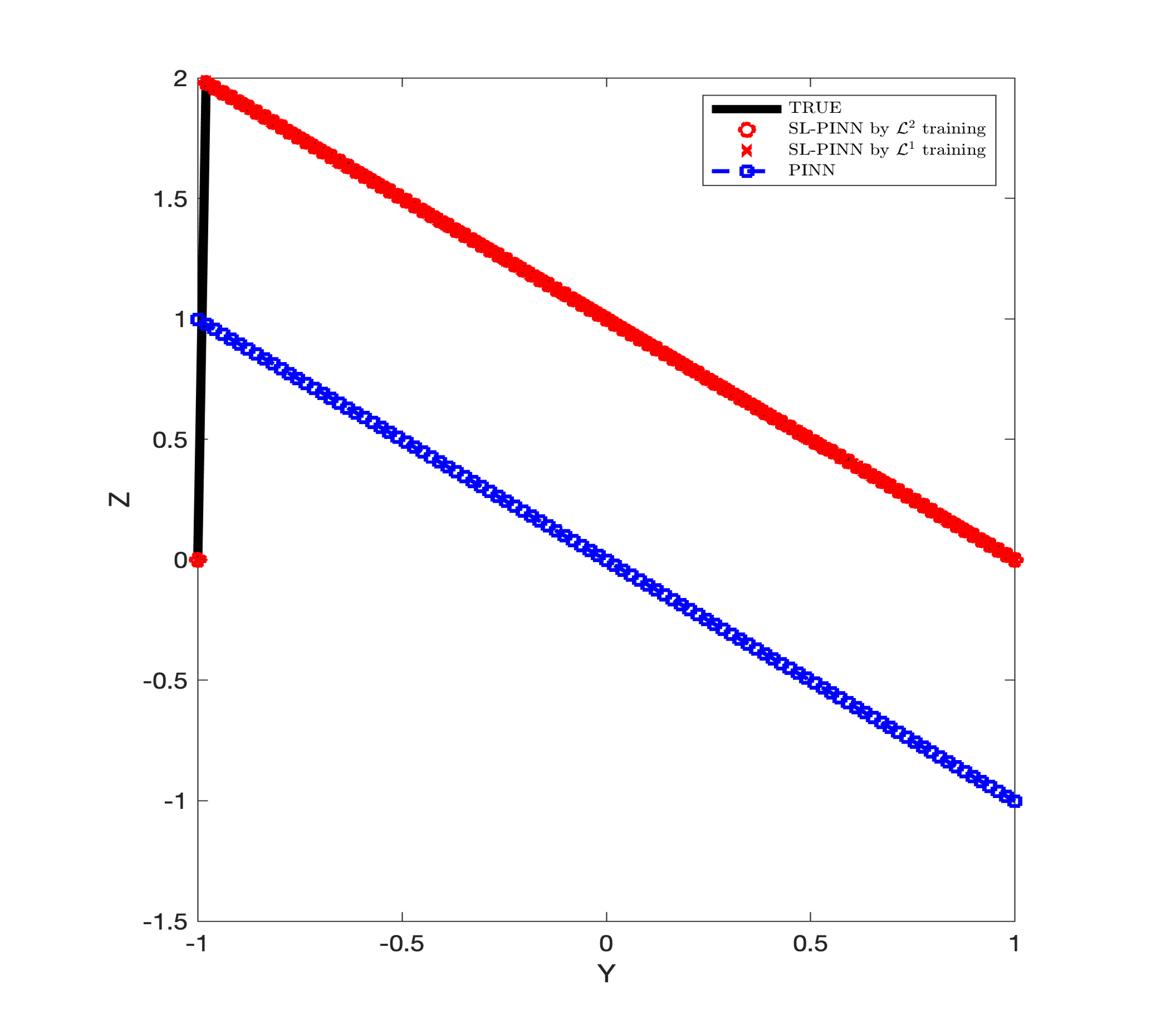}
\caption{The one-dimensional profile of predicted solutions along the line $\tau = \pi/2$.} \label{fig3-3}
\end{figure}

\begin{figure}[H]
    \centering
    \begin{subfigure}[b]{0.4\textwidth}
        \centering
        \includegraphics[width=\textwidth]{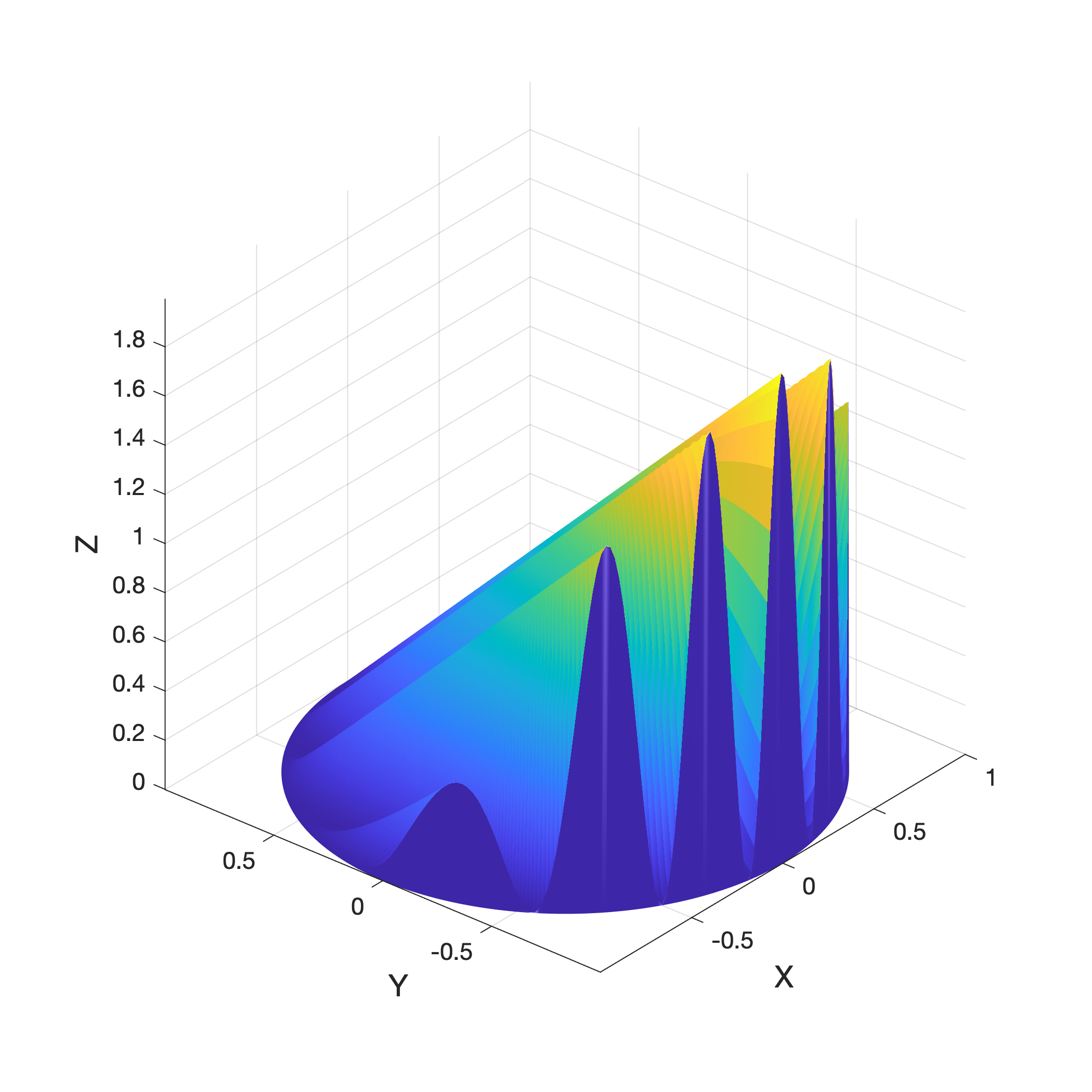}
        \caption{True solution}
         %\label{fig:y equals x}
    \end{subfigure}
    \begin{subfigure}[b]{0.4\textwidth}
        \centering
        \includegraphics[width=\textwidth]{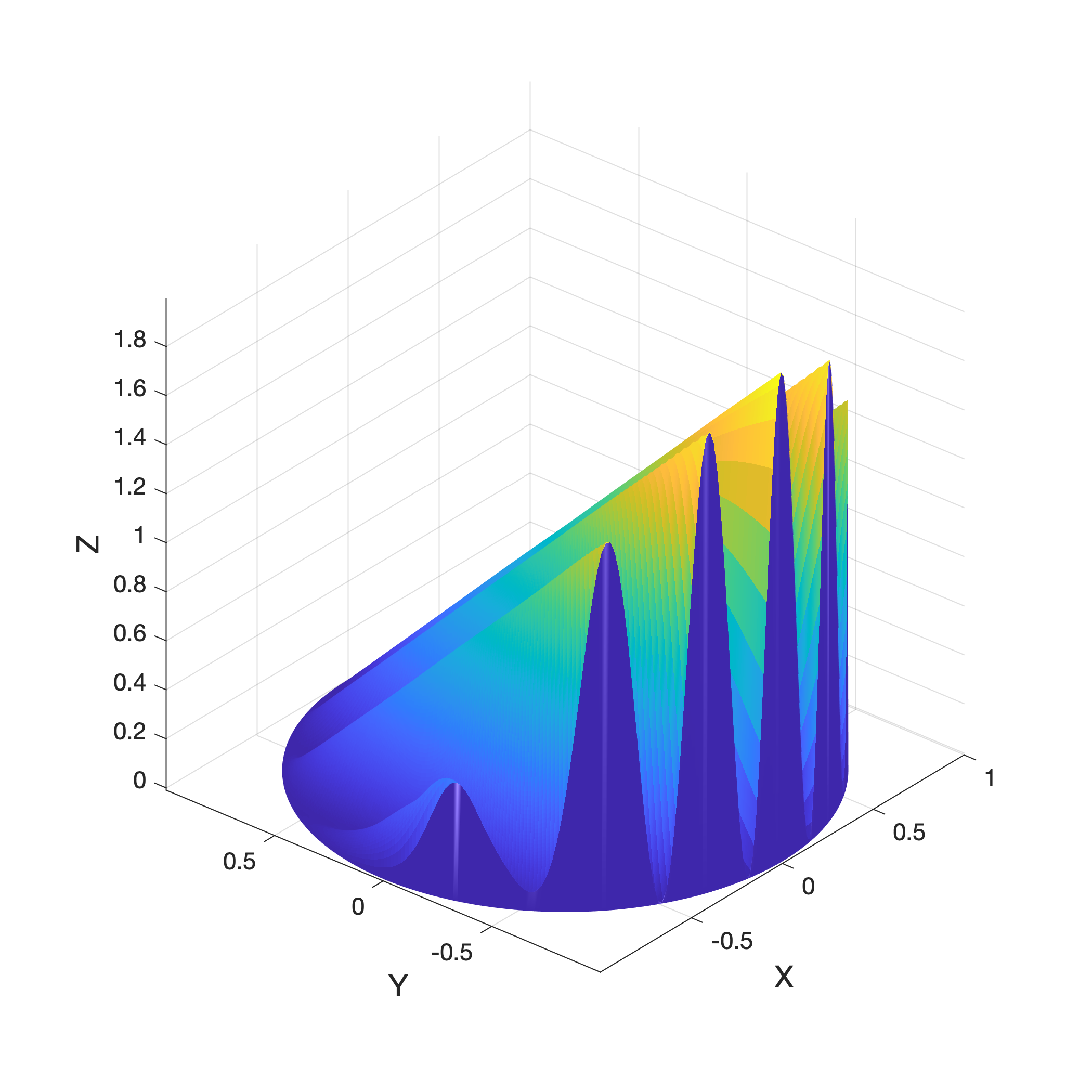}
        \caption{SL-PINN using $L^2$ training}
         %\label{fig:three sin x}
    \end{subfigure}
        \caption{Numerical prediction of \eqref{e:circle_eq} with $\ep = 10^{-6}$ and $f$ specified in \eqref{e:f_oscillation}. For our simulations, we select a uniform grid of $50$ discretized points in each of the $\eta$ and $\tau$ directions.}\label{fig4}
\end{figure}

\subsection{Numerical Simulations for Time-depedent problem } \label{subsec:time}

We extend our approach to {a} time dependent problem:
\begin{equation} \label{e:time_circle}
\begin{split}
   L_{\epsilon}u^{\ep} :u^{\ep}_{t}- \epsilon \Delta u^{\ep} - u^{\ep}_y & =f, \quad \text{ for } (x,y) \in \Omega, \quad t \in (0,T),\\
     u^{\ep}(x,y,t)  & = 0, \quad \text{  for  } (x,y) \in \partial \Omega, \quad t \in (0,T),\\
     u^{\ep}(x,y,0) & =0, \quad \text{ for } (x,y) \in \Omega, \quad \text{ at } t=0.\\
\end{split}    
\end{equation}
The analysis for time-dependent problems is similar to that for time-independent problems. For more detailed theoretical results, refer to the work by \cite{tspcircle}.
To facilitate the development of the boundary layer analysis, we employ boundary-fitted coordinates expressed as \(x = (1 - \eta) \cos \tau\) and \(y = (1 - \eta) \sin \tau\), as in the time-independent case.
By introducing the transformation $u^{\ep}(x,y,t)=v^{\ep}(\eta,\tau,t)$, we obtain that
\begin{equation}
\begin{aligned}
 P_{\epsilon}v^{\epsilon} &  :=  \ v^{\epsilon}_{t} + \frac{1}{(1-\eta)^2} ( -\epsilon ( v^{\epsilon}_{\tau \tau} 
- (1-\eta) v^{\epsilon}_{\eta} + (1-\eta)^2 v^{\epsilon}_{\eta \eta} ) \\
& \qquad + (1-\eta)^2 (\sin \tau) v^{\epsilon}_{\eta} - (1-\eta) (\cos \tau) v^{\epsilon}_{\tau} ) \\
& = f, \quad  \text{in } (\eta, \tau) \in D, \ t \in (0,T), \\
 v^{\epsilon}(0, \tau, t)  & =  \ 0, \quad  \text{at } 0 \leq \tau \leq 2\pi, \ t \in (0,T), \\
 v^{\epsilon}(\eta, \tau, 0)  & =  \ 0, \quad  \text{in } (\eta, \tau) \in D.
\end{aligned}
\end{equation}
As stated in the boundary layer analysis conducted by \cite{hong2015singular}, the sharp transition occurs solely in the spatial direction. Consequently, the solution does not exhibit a boundary layer in the temporal direction. Therefore, the time-independent problem yields the same form of the corrector equation as in the case without time dependence. Thus, we can derive the corrector equation as described by \cite{hong2015singular}:
For $t>0$,
\be
\begin{split}
     - \ep \varphi^0_{\eta \eta} +(\sin \tau) \varphi^0_{\eta}  & = 0, \quad  {\text{  for  } 0 < \eta < 1, \quad \pi < \tau < 2 \pi,}  \\
     \varphi^0  & = {- u^0(t,\cos \tau, \sin \tau)}, \quad {\text{  at  } \eta=0 },\\
     \varphi^0  & \rightarrow 0 \quad \text{ as } \eta \rightarrow 1.
\end{split}
\ee
Thus, we can find an explicit solution to the corrector equation:
\be
\varphi^0 = -u^0(t,\cos\tau, \sin\tau)\exp\left(\frac{\sin \tau}{\ep} \eta \right) \chi_{[\pi, 2\pi]}(\tau),
\ee
where $\chi$ stands for the characteristic function.
For our numerical scheme, we introduce an approximate form,
\be
\bar \varphi^0 = -u^0(t,\cos\tau, \sin\tau)\exp\left(\frac{\sin \tau}{\ep} \eta \right) \chi_{[\pi, 2\pi]}(\tau) \delta(\eta),
\ee
where $\delta(\eta)$ is a smooth cut-off function such that $\delta(\eta) = 1$ for $\eta \in [0, 1/2]$ and $=0$ for $\eta \in [3/4, 1]$.

We establish the {\it SL-PINN} based on boundary layer analysis, incorporating the profile of the corrector such that
\begin{equation}\label{e:EPINN_scheme_time}
    \widetilde{v}(t, \eta,\tau ; \, {\blds \theta}) 
            =(e^t-1)
            \left(\hat{v}(t, \eta,\tau,{\blds \theta})-\hat{v}(t, 0,\tau,{\blds \theta}){\hat{ \varphi}^0(\eta,\tau)}\right)C(\eta,\tau),
\end{equation}
{
where $\hat{ \varphi}^0$ is given by 
\begin{equation}
     \hat{ \varphi}^0=\exp\left(\frac{\sin \tau}{\ep} \eta \right) \chi_{[\pi, 2\pi]}(\tau) \delta(\eta),
\end{equation}}
and %$C(\eta,\tau)$ is given by
\begin{equation}\label{e:circle_time_C_term}
\begin{split}
    C(\eta,\tau)=\begin{cases}
    1-(1-\eta)^3 , \text{ if } 0 \leq \tau \leq \pi\\
    1-(1-\eta)^3-((1-\eta)\sin\tau)^3, \text{ if } \pi < \tau <2\pi,
    \end{cases}
\end{split}
\end{equation}
and $\hat{v}(t,\eta,\tau,{\blds \theta}) = \hat{u}(t,x,y,{\blds \theta}) = W_{4}{\sigma}(W_{1}x+W_{2}y+W_{3}t+b)$.
Then, the residual loss is defined by
\begin{equation}\label{e:circle_loss_time}
\begin{split}
    Loss=\left(\frac{1}{N}\sum_{i=0}^{N} |P_{\epsilon}\widetilde{v}((t_i,\eta_{i},\tau_{i},{\blds \theta}))-f|^{p} \right)^{1/p}  \quad  \text{ for }(t_i,\eta_{i},\tau_{i}) \in  [0,T] \times D,
\end{split}    
\end{equation}
where $p=1,2$.
Due to the boundary layer behavior near the lower semi-circle, we split the residual loss \eqref{e:circle_loss_time} into two sections: $0 < \tau < \pi$ and $\pi \leq \tau \leq 2 \pi$. For the $0 < \tau < \pi$, calculating the residual loss is simple:
\begin{equation}
\begin{split}\label{e:EPINN_circle_direct_calculation_upper}
    & P_{\epsilon}\widetilde{v}((t,1-\eta,\tau,{\blds \theta})) - f \\
    & = \left( e^{t} \hat{v} + (e^{t} - 1) \hat{v}_{t}(t,\eta,\tau,{\blds \theta}) \right) (1 - (1-\eta)^3) \\
    & \quad + (e^{t} - 1) \left( -\ep \left( \frac{1}{(1-\eta)^2} - (1-\eta) \right) \hat{v}_{\tau \tau}(t,\eta,\tau,{\blds \theta}) 
    + \left( (1-\eta)^2 - \frac{1}{(1-\eta)} \right) \cos(\tau) \hat{v}_{\tau}(t,\eta,\tau,{\blds \theta}) \right. \\
    & \quad - \ep (1 - (1-\eta)^3) \hat{v}_{\eta \eta}(t,\eta,\tau,{\blds \theta}) 
    - \left( 6\ep + \ep (1-\eta)^2 - \frac{\ep}{(1-\eta)} - \sin(\tau) (1 - (1-\eta)^3) \right) \hat{v}_{\eta}(t,\eta,\tau,{\blds \theta}) \\
    & \quad + \left. \left( 3\ep + \frac{6\ep}{(1-\eta)} + 3 (1-\eta)^2 \sin(\tau) \right) \hat{v}(t,\eta,\tau,{\blds \theta}) \right) - f 
\end{split}
\end{equation}
for $0 \leq \eta < 1 ,\quad 0 < \tau < \pi$ and$\quad 0 \leq t \leq T$.
Introducing the boundary layer element in equation \eqref{e:EPINN_scheme_time} and adding the regularizing term in equation \eqref{e:circle_time_C_term} makes the residual loss calculation more complex when 
$\tau$ ranges from $\pi$ to $2\pi$ such that

\begin{equation}\label{e:EPINN_circle_direct_calculation}
\begin{split}
    & P_{\epsilon}\widetilde{v}(t,\eta,\tau,{\blds \theta}) - f \\
    & = \left( e^{t} \hat{v}(t,\eta,\tau,{\blds \theta}) + (e^{t} - 1) \hat{v}_{t}(t,\eta,\tau,{\blds \theta}) 
    - \left( e^{t} \hat{v}(t,0,\tau,{\blds \theta}) + (e^{t} - 1) \hat{v}_{t}(t,0,\tau,{\blds \theta}) \right) \right) \\
    & \quad \times \exp\left( \frac{\sin \tau}{\ep} \eta \right) (1 - (1-\eta)^3 - (1-\eta)^3 \sin^3(\tau)) \\
    & \quad + (e^{t} - 1) \left( -\ep \left( \frac{1}{(1-\eta)^2} - (1-\eta) - (1-\eta) \sin^3(\tau) \right) \hat{v}_{\tau \tau}(t,\eta,\tau,{\blds \theta}) \right. \\
    & \quad + \left( 6 \ep (1-\eta) \sin^2(\tau) \cos(\tau) + \left( (1-\eta)^2 - \frac{1}{(1-\eta)} + (1-\eta)^2 \sin^3(\tau) \right) \cos(\tau) \right) \hat{v}_{\tau}(t,\eta,\tau,{\blds \theta}) \\
    & \quad -\ep \left( 1 - (1-\eta)^3 - (1-\eta)^3 \sin^3(\tau) \right) \hat{v}_{\eta \eta}(t,\eta,\tau,{\blds \theta}) \\
    & \quad + \left( 6\ep + \ep (1-\eta)^2 - \frac{\ep}{(1-\eta)} + 7\ep (1-\eta)^2 \sin^3(\tau) + (1-\eta)^3 \sin^3(\tau) - (1 - (1-\eta)^3) \sin(\tau) \right) \hat{v}_{\eta}(t,\eta,\tau,{\blds \theta}) \\
    & \quad + \left( 3\ep + \frac{6\ep}{(1-\eta)} + 6 \ep (1-\eta) \sin(\tau) + 6 (1-\eta)^2 \sin(\tau) \right) \hat{v}(t,\eta,\tau,{\blds \theta}) \\
    & \quad + \hat{v}(t,0,\tau,{\blds \theta}) \big[ \ep (C(\eta,\tau) \delta_{\eta \eta} + ((1-\eta) C(\eta,\tau) + 2 C_{\eta}(\eta,\tau)) \delta_{\eta} \\
    & \quad - ((1-\eta) C_{\eta}(\eta,\tau) + C_{\eta \eta}(\eta,\tau)) \delta ) - C(\eta,\tau) \sin(\tau) \delta_{\eta} + C_{\eta}(\eta,\tau) \sin(\tau) \delta \big] \exp\left( \frac{\sin \tau}{\ep} \eta \right) \\
    & \quad + \frac{\delta}{(1-\eta)^2} \big[ (\ep \hat{v}_{\tau \tau}(t,0,\tau,{\blds \theta}) + (2+\eta) \cos(\tau) \hat{v}_{\tau}(t,0,\tau,{\blds \theta}) - \eta \sin(\tau) \hat{v}(t,0,\tau,{\blds \theta})) C(\eta,\tau) \\
    & \quad + (2\ep \hat{v}_{\tau}(t,0,\tau,{\blds \theta}) + (1+\eta) \cos(\tau) \hat{v}(t,0,\tau,{\blds \theta})) C_{\tau}(\eta,\tau) + \ep \hat{v}(t,0,\tau,{\blds \theta}) C_{\tau \tau}(\eta,\tau) \big] \exp\left( \frac{\sin \tau}{\ep} \eta \right) \\
    & \quad + (e^{t} - 1) C(\eta,\tau) \hat{v}(t,0,\tau,{\blds \theta}) \frac{\delta(\eta)}{(1-\eta)^2} \left( \frac{\eta \cos^2(\tau)}{\ep} \right) \exp\left( \frac{\sin \tau}{\ep} \eta \right) - f,
\end{split}
\end{equation}

where $0\leq \eta < 1$, $\pi \leq \tau \leq 2\pi$, $0 \leq t \leq T$. 
To make computation in \eqref{e:EPINN_circle_direct_calculation} feasible, we extract the largest order term in $\ep$, which includes $O(\ep^{-1})$ such that
\begin{equation}
    \psi(t,\eta,\tau,{\blds \theta})=(e^{t}-1)(1-(1-\eta)^3-(((1-\eta)\sin\tau)^3)\hat{v}(t,0,\tau,{\blds \theta})\frac{\delta(\eta)}{(1-\eta)^2}(\frac{\eta}{\ep}\cos^2\tau)\exp\left(\frac{\sin \tau}{\ep} \eta \right) \chi_{[\pi, 2\pi]}(\tau).
\end{equation}
When $p=1$, the loss in \eqref{e:circle_loss_time} becomes, by the  triangular inequality,
\begin{equation} \label{e:bound_time_0}
 Loss  \leq \frac{1}{N}\sum_{i=0}^{N}|\psi (t_{i},\eta_{i},\tau_{i},{\blds \theta})|
 +  \frac{1}{N}\sum_{i=0}^{N} |P_{\epsilon}\widetilde{v}((t_{i},\eta_{i},\tau_{i},{\blds \theta}))-\psi (t_{i},\eta_{i},\tau_{i},{\blds \theta})-f|.
\end{equation}
The rightmost term does not include a large order term such as \(\ep^{\alpha}\) (where \(\alpha < 0\)) since \(\psi\) contains large terms like \(\ep^{-1}\). Hence, this part can be computed using conventional \(L^1\) or \(L^2\) loss. Assume that \(\boldsymbol{\theta}\) is fixed and that we select a sufficiently large number of sampling points \(N\) such that
\begin{equation}\label{e:normal_bound_time}
\begin{aligned}
    \frac{1}{N}\sum_{i=0}^{N} &\left|P_{\epsilon}\widetilde{v}(t_{i}, \eta_{i}, \tau_{i}, \boldsymbol{\theta}) - \psi(t_{i}, \eta_{i}, \tau_{i}, \boldsymbol{\theta}) - f\right| \\
    &\approx \frac{1}{T\pi}\int_{0}^{T}\int_{0}^{2\pi} \int_{0}^{1} \left|P_{\epsilon}\widetilde{v}(t, \eta, \tau, \boldsymbol{\theta}) - \psi(t, \eta, \tau, \boldsymbol{\theta}) - f\right| \, d\eta \, d\tau \, dt < \infty.
\end{aligned}
\end{equation}
Given that \(\psi\) includes \(\mathcal{O}(\epsilon^{-1})\), we handle \(\psi\) using the \(L^1\) loss with the bound \eqref{known_upper_bound}.
Assume we choose a sufficiently large number of sampling points \(N\):
\begin{equation}\label{e:bound00 time}
\begin{split}
     & \frac{1}{N}\sum_{i=0}^{N}|\psi(t_{i},\eta_{i},\tau_{i},{\blds \theta})| \approx \frac{1}{T \pi}\int^{T}_{0}\int^{2\pi}_{\pi} \int^{1}_{0} |\psi(t,\eta,\tau,{\blds \theta})|d \eta d \tau d t.\\
\end{split}
\end{equation}
Note that the right-hand side of equation (\ref{e:bound00 time}) is bounded by \(C\). Using calculations analogous to those in equations \eqref{e:bound01} through \eqref{e:bound03}, we can deduce the following result:
\begin{equation} \label{e:bound_time_1}
\begin{split}
     \frac{1}{T \pi}\int^{T}_{0}\int^{2\pi}_{\pi} \int^{1}_{0} |\psi(t,\eta,\tau,{\blds \theta})|d \eta d \tau d t
     < C+C\ep e^{-\frac{1}{2\ep}}.
\end{split}
\end{equation}
Thus, the loss \eqref{e:circle_loss_time} becomes nearly constant due to \eqref{e:bound00 time} and \eqref{e:normal_bound_time} when \(\epsilon\) is sufficiently small. This simplification allows for feasible computations in the SL-PINN approach.

The effectiveness of our new approach is demonstrated through a series of numerical experiments, as shown in Figure \ref{fig5}. For these experiments, we use a grid discretization with 50 uniformly spaced points in each direction of \(\tau\), \(\eta\), and \(t\). The function \(f\) employed in these experiments satisfies the compatibility condition mentioned in \cite{hong2015singular}:
\be \label{e:time_force}
f=(1-x^2)^2 t+(1-x^2)^{2}(-y+\sqrt{1-x^2}+\frac{\epsilon(y+\sqrt{1-x^2})}{(1-x^2)^{3/2}})+O(\epsilon).
\ee
Also, the corresponding $u$ is given by
\begin{equation}
\begin{split}
    u(t,x,y)=\begin{cases}
    t(1-x^{2})^{2}(-y+\sqrt{1-x^2}+\epsilon \frac{(y+\sqrt{1-x^2})}{(1-x^2)^{3/2}} , \text{ in } D\\
    0, \text{ on }\partial D.
    \end{cases}
\end{split}
\end{equation}
Figure \ref{fig5} presents the numerical solution obtained using SL-PINN with \(\epsilon = 10^{-6}\). The conventional PINNs are not included in the figure due to their significant deviation from the true solution. Enhanced with the corrector function, the SL-PINN consistently produces stable and accurate approximate solutions, regardless of the small parameter \(\epsilon\); see e.g, Table \ref{tab:1}.
Figure \ref{fig5}(C) shows the one-dimensional predicted solution profiles at \(\tau = \pi/2\) for various time instances, including \(t = 0, 0.25, 0.5, 0.75\), and \(1.0\), alongside the true solution. This example demonstrates that our SL-PINN method delivers accurate predictions throughout the entire time span.

\begin{figure}%[htbp]
    \centering
    \begin{subfigure}[b]{0.3\textwidth}
        \centering
        \includegraphics[width=\textwidth]{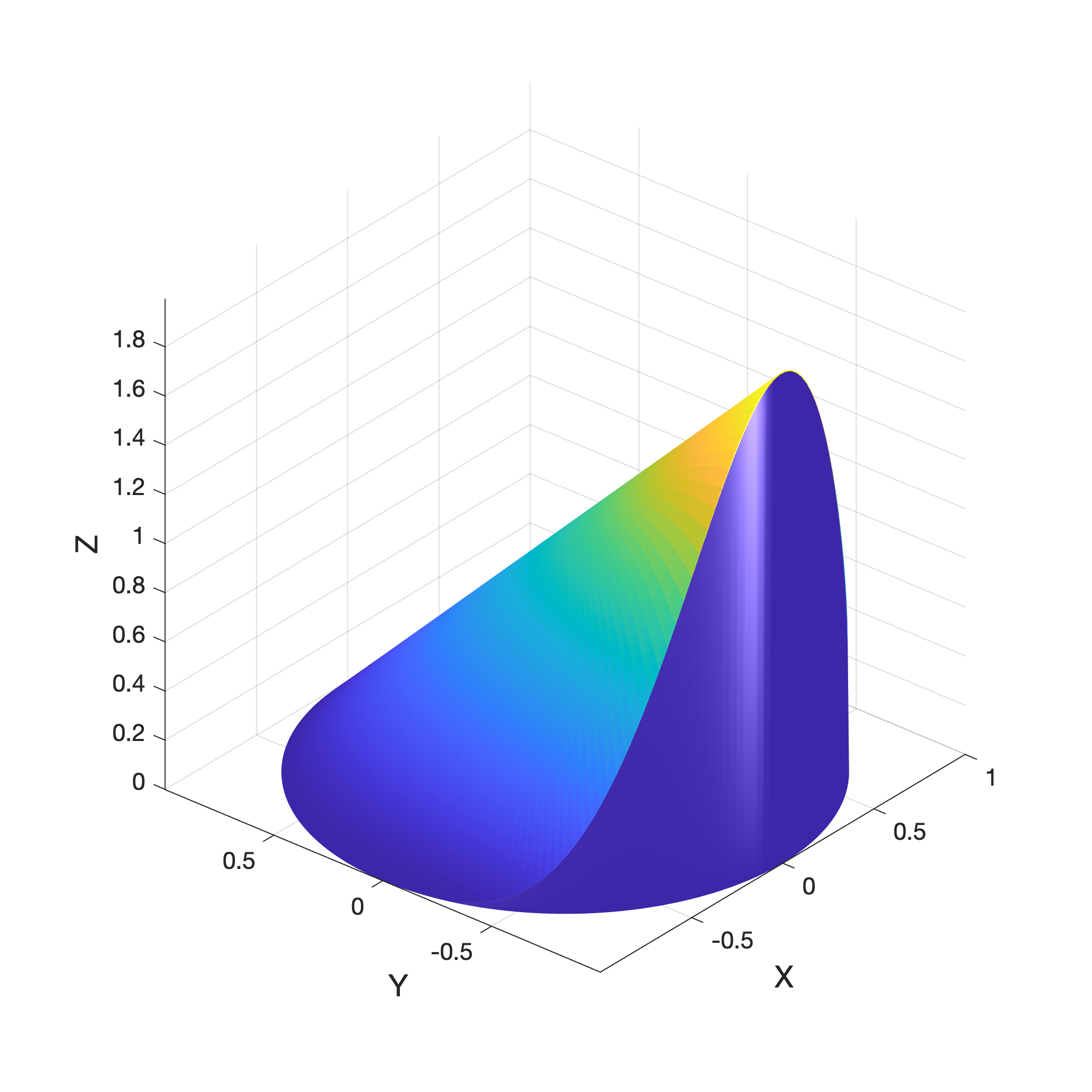}
        \caption{True soltuion at t=1}
         %\label{fig:y equals x}
    \end{subfigure}
     %\hfill
    \begin{subfigure}[b]{0.3\textwidth}
        \centering
        \includegraphics[width=\textwidth]{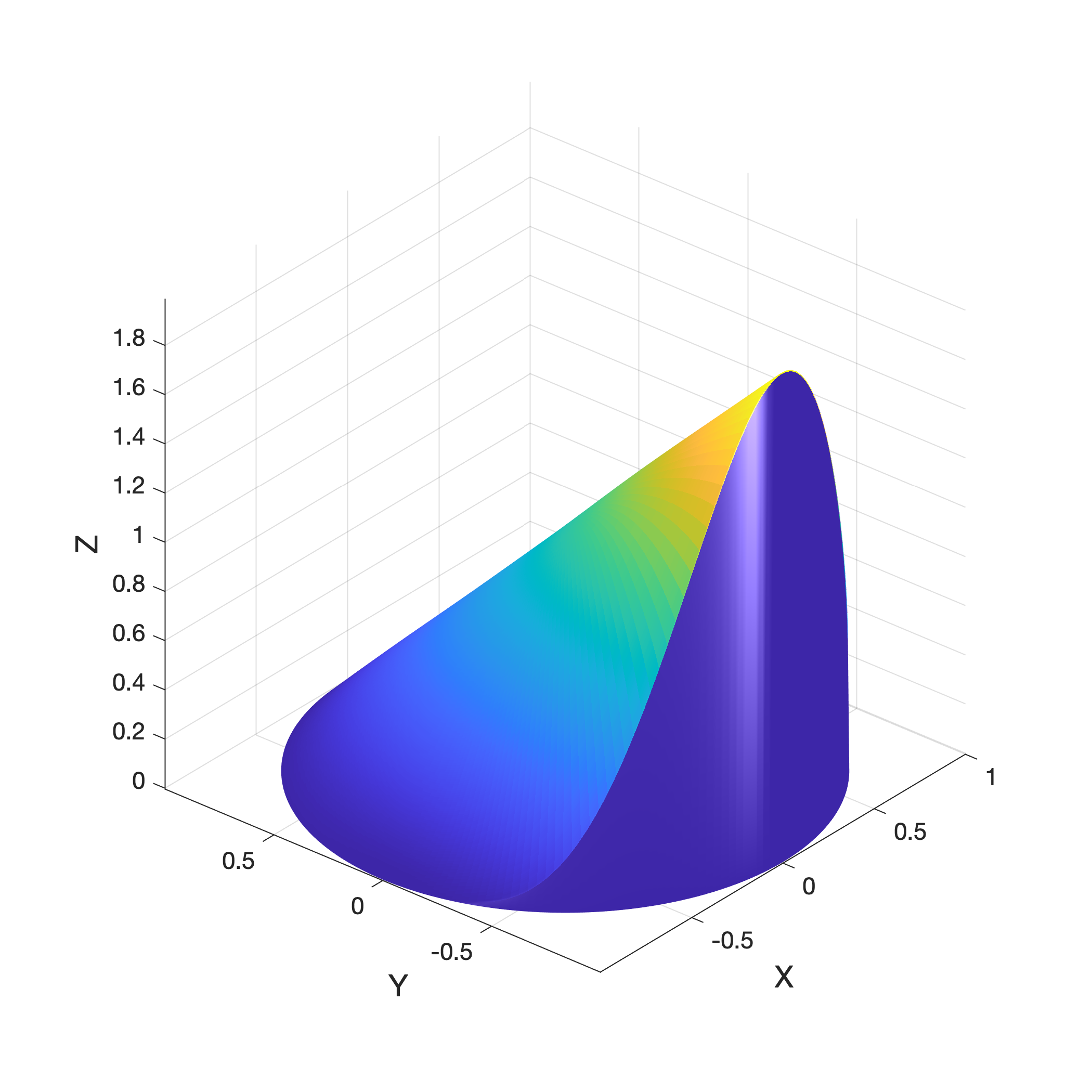}
        \caption{SL-PINN at t=1}
         %\label{fig:three sin x}
    \end{subfigure}
     %\hfill
    \begin{subfigure}[b]{0.3\textwidth}
        \centering
        \includegraphics[width=\textwidth]{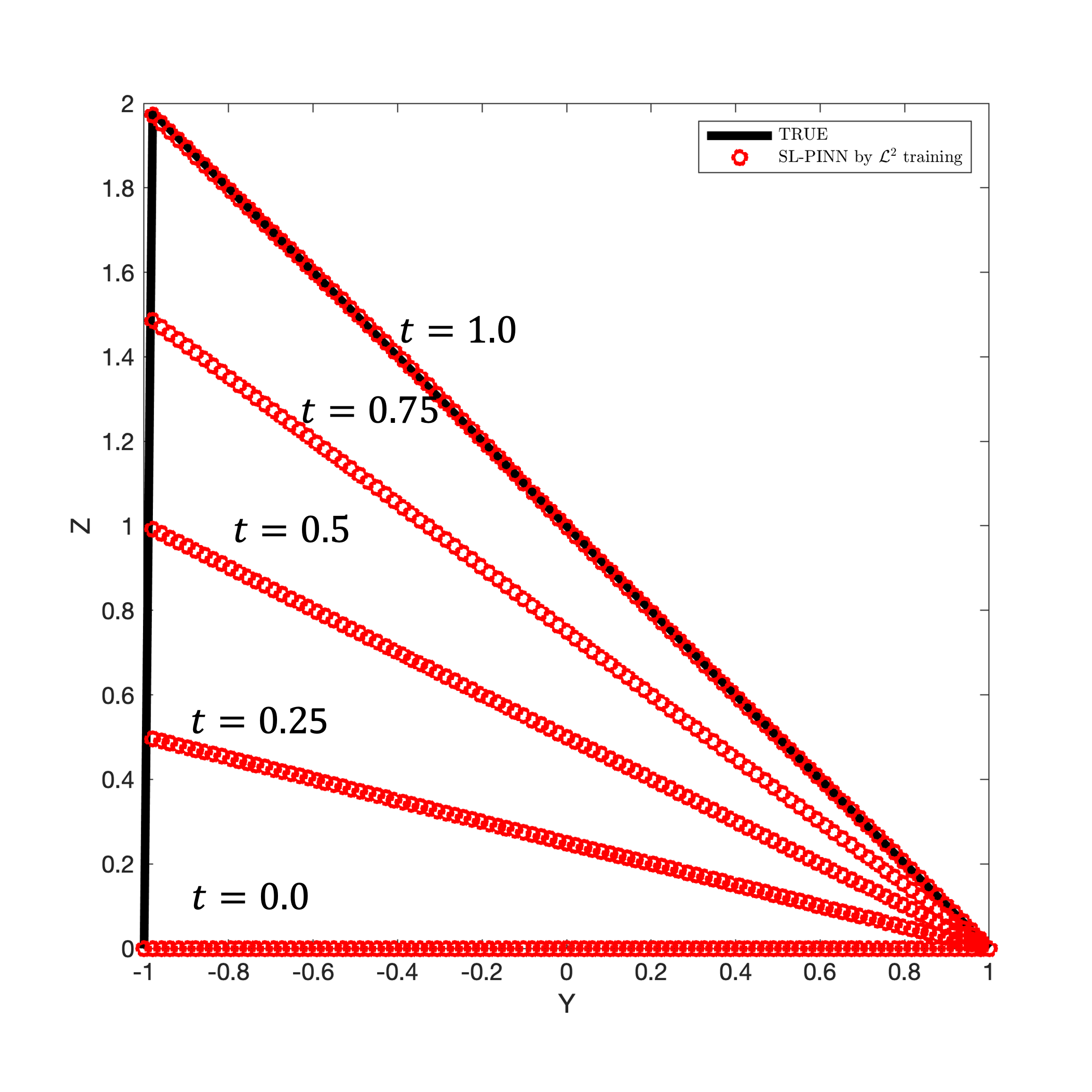}
        \caption{1D solution profiles}
         %\label{fig:five over x}
    \end{subfigure}
        \caption{Numerical prediction of \eqref{e:time_circle} with $\ep = 10^{-6}$ and $f$ specified in \eqref{e:time_force}. For our simulations, we select a uniform grid of $50$ discretized points in each of the $r$ and $\tau$ directions. In the panel (c), the one-dimensional predicted solution profiles are displayed at $\tau = \pi/2$ for various time instances, including $t =0, 0.25, 0.5, 0.75$, and $1.0$.}\label{fig5}
\end{figure}

\subsection{Numerical Simulations for Non-linear problem}
To demonstrate the flexibility of our approach, we focus on the nonlinear convection-diffusion equation in a circular domain
{
\begin{equation}
\begin{split}
   L_{\epsilon}u^{\ep} {:=}- \epsilon \Delta u^{\ep} - u^{\ep}_y+(u^{\ep})^{3} & =f, \quad \text{  in  } \Omega=\{(x,y) | x^{2}+y^{2}< 1\}\\
    u^{\ep}  & = 0, \quad \text{  at  } \partial \Omega.
\end{split}    
\end{equation}
}As in Sections \ref{subsec:plain} and \ref{subsec:time}, we employ boundary-fitted coordinates \(x = (1 - \eta) \cos \tau\) and \(y = (1 - \eta) \sin \tau\). By substituting \(u^{\ep}(x, y) = v^{\ep}(\eta, \tau)\), the transformed equation becomes:
\begin{equation}
\begin{split}
P_{\epsilon} v^{\epsilon} 
    := \ \frac{1}{(1-\eta)^2} (
    -\epsilon ( v^{\epsilon}_{\tau \tau} - (1-\eta) v^{\epsilon}_{\eta} + (1-\eta)^{2} v^{\epsilon}_{\eta \eta} ) + (1-\eta)^{2} \sin \tau v^{\epsilon}_{\eta} - (1-\eta) \cos \tau v^{\epsilon}_{\tau} &+  (1-\eta)^{2} (v^{\epsilon})^3
    ) \\
    & = f, \quad \text{in } D, \\
v^{\epsilon}(0, \tau)  = 0, \quad &\text{at } 0 \leq \tau \leq 2\pi.
\end{split}
\end{equation}
When performing boundary layer analysis, the dominant terms are the same as in the linear case described in Section \ref{subsec:plain}. Therefore, the nonlinear terms do not affect the boundary layer analysis. Consequently, we employ the same corrector equation used in the linear case without further modification
\begin{equation}\label{e:nonlinr_cor}
\begin{aligned}
    - \ep \varphi^0_{\eta \eta} +(\sin \tau) \varphi^0_{\eta}  & = 0, \quad  {\text{  for  } 0 < \eta < 1, \quad \pi < \tau < 2 \pi,}  \\
     \varphi^0  & = - u^0(\cos \tau, \sin \tau), \quad {\text{  at  } \eta=0 },\\
     \varphi^0  & \rightarrow 0 \quad \text{ as } \eta \rightarrow 1.
\end{aligned}
\end{equation}
The explicit solution of \eqref{e:nonlinr_cor} is found as
\be
\varphi^0 = -u^0(\cos\tau, \sin\tau)\exp\left(\frac{\sin \tau}{\ep} \eta \right) \chi_{[\pi, 2\pi]}(\tau),
\ee
where $\chi$ stands for the characteristic function.
To match the boundary condition in our numerical scheme, we introduce a cut-off function to derive an approximate form 
\be
\bar \varphi^0 = -u^0(\cos\tau, \sin\tau)\exp\left(\frac{\sin \tau}{\ep} \eta \right) \chi_{[\pi, 2\pi]}(\tau) \delta(\eta),
\ee
where $\delta(\eta)$ is a smooth cut-off function such that $\delta(\eta) = 1$ for $\eta \in [0, 1/2]$ and $=0$ for $r \in [3/4, 1]$.

We now establish the semi-analytic SL-PINN method as
\begin{equation}
    \widetilde{v}(\eta,\tau ; \, {\blds \theta}) 
            =(\hat{v}(\eta,\tau, {\blds \theta})-\hat{v}(0,\tau, {\blds \theta}) {\hat{ \varphi}^0})C(\eta,\tau),
\end{equation}
where $\hat{ \varphi}^0$ is given by 
\begin{equation}
     \hat{ \varphi}^0=\exp\left(\frac{\sin \tau}{\ep} \eta \right) \chi_{[\pi, 2\pi]}(\tau) \delta(\eta),
\end{equation}
and
\begin{equation} 
\begin{split}
    C(\eta,\tau)=\begin{cases}
    1-(1-\eta)^3 , \text{ if } 0 < \tau < \pi,\\
    1-(1-\eta)^3-((1-\eta)\sin\tau)^3, \text{ if } \pi \leq \tau \leq 2\pi,
    \end{cases}
\end{split}
\end{equation}
with $\hat{v}(\eta,\tau , {\blds \theta}) = \hat{u}(x,y , {\blds \theta})$.
Then, the residual loss is defined by
\begin{equation}
\begin{split}
    Loss= \left( \frac{1}{N} \sum_{i=0}^{N} \left| P_{\epsilon}\widetilde{v}(\eta_{i}, \tau_{i}; \boldsymbol{\theta}) - f \right|^{p} \right)^{1/p} \quad \text{for} \ (\eta_{i}, \tau_{i}) \in D,
\end{split}    
\end{equation}
The residual loss was calculated similarly to \eqref{e:DD1c} to \eqref{eq:circle:complex}. Therefore, we omit the detailed derivation here.
%, and \( |\psi|_{L^{1}(D)} < C \) can be derived using computations analogous to those in \eqref{e:bound01} to \eqref{e:bound03}. 
Figure \ref{fig non-li} presents the numerical solution obtained using SL-PINN with \(\epsilon = 10^{-6}\); see, e.g., Table \ref{tab:1}. Unlike conventional PINNs, which struggle to capture the boundary layer, the SL-PINN, enhanced with the corrector function, consistently produces stable and accurate approximate solutions, regardless of the small parameter \(\epsilon\).

\begin{figure}[H]
    \centering
    \begin{subfigure}[b]{0.4\textwidth}
        \centering
    \includegraphics[width=\textwidth]{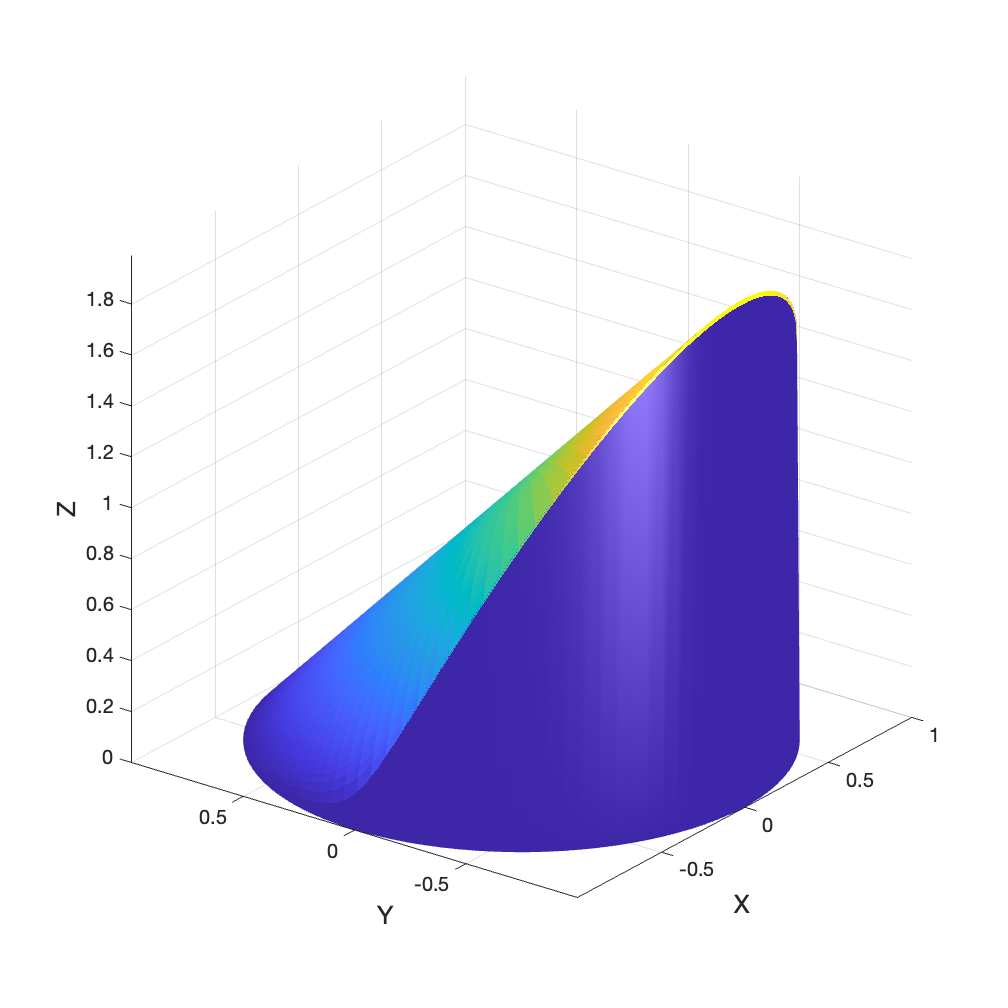}
        \caption{True solution}
         %\label{fig:y equals x}
    \end{subfigure}
     %\hfill
    \begin{subfigure}[b]{0.4\textwidth}
    \centering
    \includegraphics[width=\textwidth]{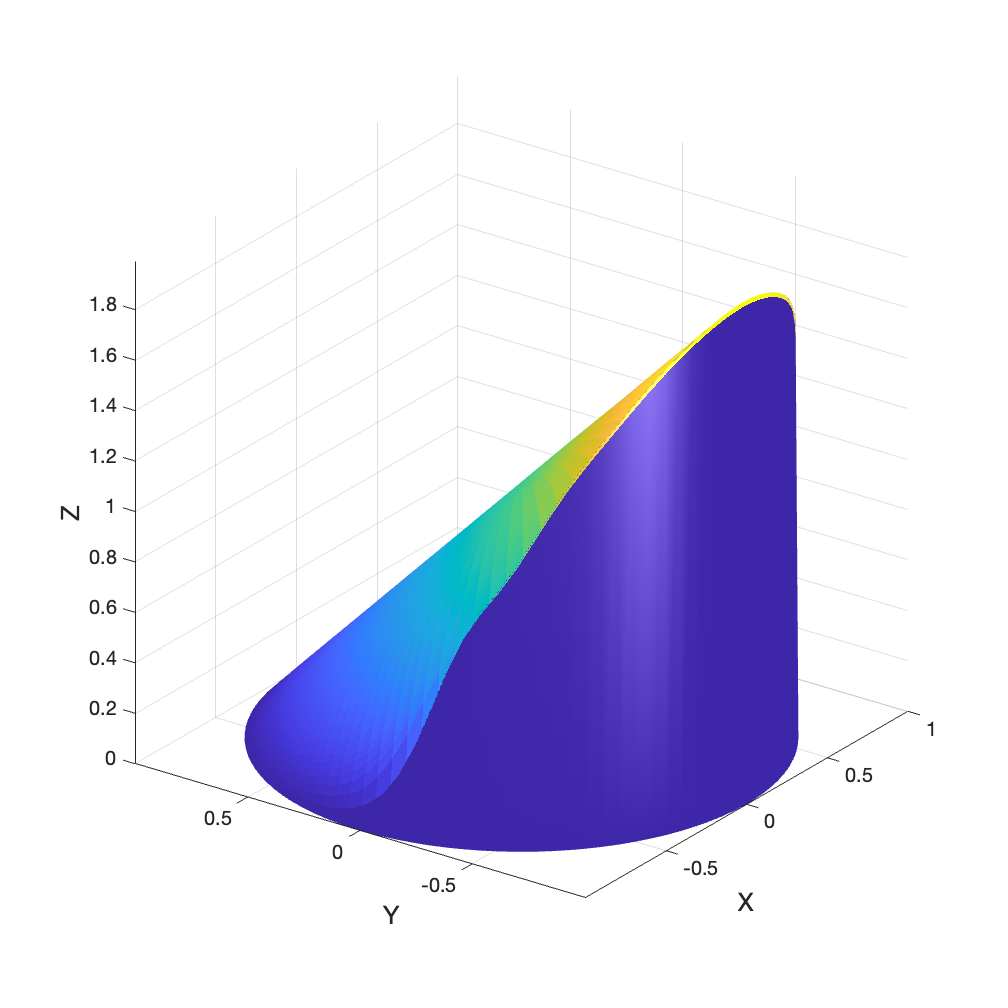}
        \caption{SL-PINN using $L^2$ training}
         %\label{fig:y equals x}
    \end{subfigure}
     %\hfill
    \begin{subfigure}[b]{0.4\textwidth}
        \centering
    \includegraphics[width=\textwidth]{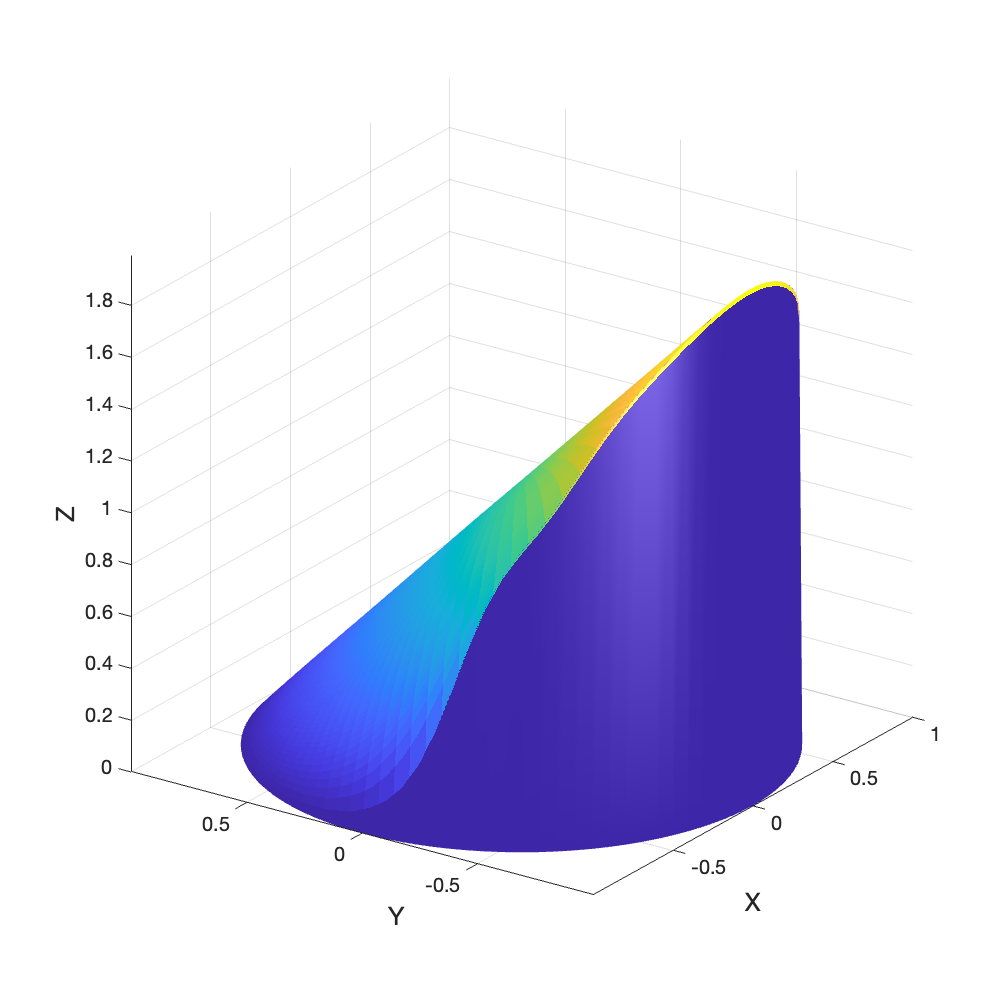}
        \caption{SL-PINN using $L^1$ training}
         %\label{fig:three sin x}
    \end{subfigure}
     %\hfill
    \begin{subfigure}[b]{0.4\textwidth}
        \centering
    \includegraphics[width=\textwidth]{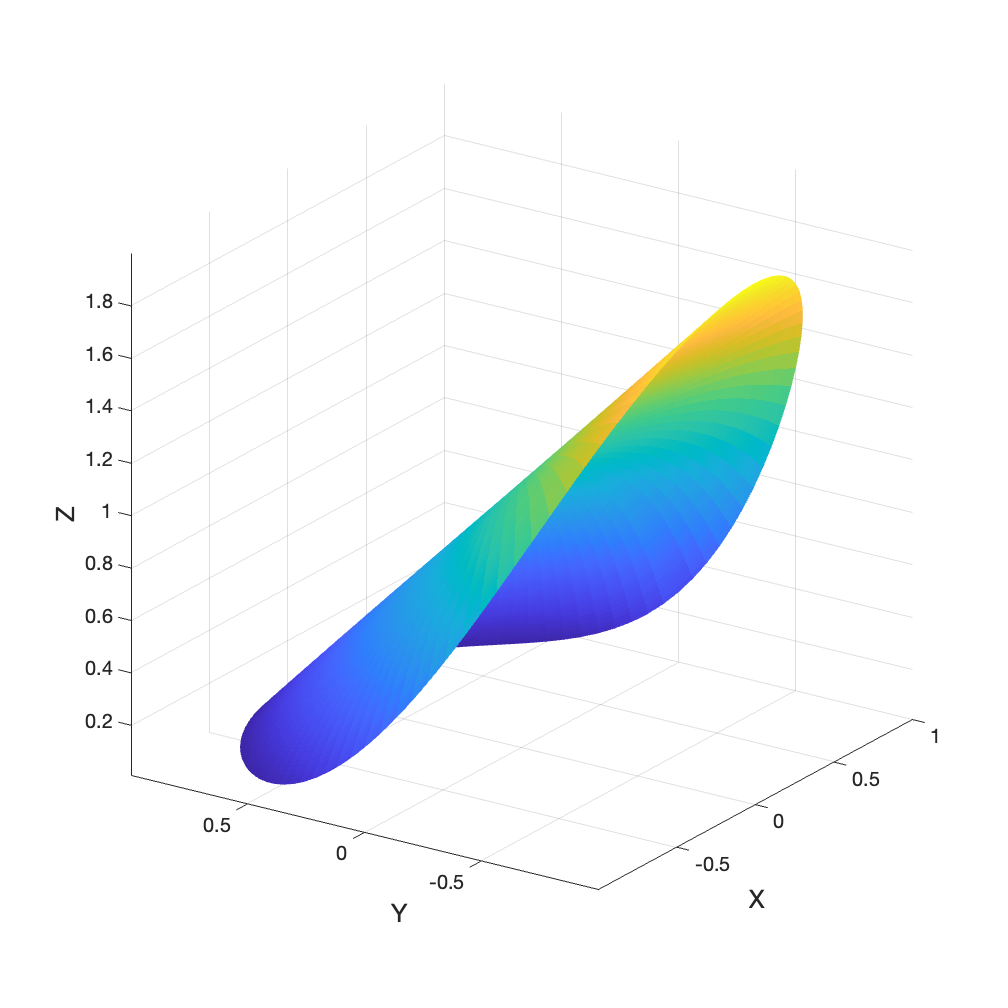}
        \caption{Conventional PINN}
         %\label{fig:three sin x}
    \end{subfigure}
        \caption{Numerical prediction of \eqref{e:circle_eq} with $\ep = 10^{-6}$. For our simulations, we select a uniform grid of $50$ discretized points in each of the $\eta$ and $\tau$ directions.}\label{fig non-li}
\end{figure}

\section{Elliptical Domain}
In this section, we investigate the boundary layer behavior of the singularly perturbed convection-diffusion problem (\ref{eq:main}) in an elliptical domain. As \(\epsilon\) becomes small, the solution to (\ref{eq:main}) exhibits rapid transitions near the lower semi-ellipse. Additionally, unlike the channel domain, the elliptical domain introduces singularities at the junctions of the upper and lower semi-ellipses. To address these issues, we perform a \textit{new and complete} boundary layer analysis and validate our approach through numerical simulations.

\subsection{Boundary Layer Analysis}
We consider the convection-diffusion equations on an elliptical domain, which represents a natural yet non-trivial extension of the problem:
\begin{equation} \label{e:ellipse}
\begin{split}
   L_{\epsilon}u^{\ep} :=- \epsilon \Delta u^{\ep} - u^{\ep}_y & =f, \quad \text{  in  } \Omega=\left\{(x,y) | {\frac{x^{2}}{A^{2}}}+\frac{y^{2}}{B^{2}}< 1\right \}\\
     u^{\ep}  & = 0, \quad \text{  at  } \partial \Omega.
\end{split}    
\end{equation}
Within the elliptical domain, we utilize the elliptic coordinate system \cite{churchill1961mathematical} and examine two separate scenarios; \lowercase\expandafter{\romannumeral1}) $A > B$, and \lowercase\expandafter{\romannumeral2}) $A < B$.
We start by introducing the expansion \( u = u^0 + \varphi^0 \), where \( u^0 \) represents the outer expansion and \(\varphi^0\) is the inner expansion that captures the behavior within the boundary layer. To derive the corrector function from (\ref{e:ellipse}), we begin by examining the limit problem, setting \(\epsilon\) to zero in the governing equation, that is,
\begin{equation} \label{e:ellipse_limit}
\begin{split}
L_{0}u^{0} :=-u^{0}_y & = f \quad \text{  in  } \Omega,\\
u^{0} & =0 \text{ at } \Gamma_{+}=\left\{(x,y) | {\frac{x^{2}}{A^{2}}}+\frac{y^{2}}{B^{2}}= 1, y > 0\right \}.
\end{split}
\end{equation}
Note that the boundary condition in \eqref{e:ellipse_limit} is imposed only on the upper semi-ellipse, where the fluid flows in. Consequently, we expect the boundary layer to form near the lower semi-ellipse, where the fluid flows out. The singularly perturbed problem described by equation \eqref{e:ellipse} requires careful treatment in boundary layer analysis due to the presence of a degenerate boundary layer near the characteristic points at \((\pm A, 0)\).
To begin with the case \(A > B\), we define the elliptic coordinates \((\eta, \tau)\) as follows:
\begin{equation}
\begin{split}
    x = a \cosh (R - \eta) \cos \tau, \quad y = a \sinh (R - \eta) \sin \tau,
\end{split}
\end{equation}
where \(0 \leq \eta < R\), \(\tau \in [0, 2 \pi]\), and \(a\) is a constant.
By setting $u^{\ep}(x,y)= v^{\ep}(\eta,\tau)$ with $a\cosh R =A$ and $a\sinh R =B$, we transform \eqref{e:ellipse} into the following form
\begin{equation}
\begin{split}
     P_{\epsilon} v^{\ep} {:=}-\epsilon(v^{\ep}_{\eta \eta}+v^{\ep}_{\tau \tau})+(a \cosh (R-\eta) \sin \tau)  v^{\ep}_{\eta}-(a\sinh (R-\eta) \cos \tau)v^{\ep}_{\tau} & = H f \quad \text{ in } D=[0,R)\times [0,2\pi]\\
      v^{\ep}(0,\tau)&=0 \text{ at } 0 \leq \tau \leq 2\pi,
\end{split}
\end{equation}
where $H=(a\sinh (R-\eta) \cos \tau )^{2}+(a \cosh (R-\eta) \sin \tau)^{2}$.
To derive the corrector equation for \(\varphi\), we use the following stretched variable such that $\overline{\eta}=\frac{\eta}{\epsilon^{\alpha}}$.
We then obtain the corrector equation with \(\alpha = 1\), which represents the thickness of the boundary layer such as

\begin{equation} \label{e:cor_elliptic_01}
\begin{split}
     -\varphi^0_{\overline{\eta \eta}}+ \left (a \left(\frac{e^{R-\epsilon\overline{\eta}}+e^{\epsilon\overline{\eta}-R}}{2} \right)\sin \tau \right) \varphi^0_{\overline{\eta}}&=0, \text{  for  } 0 < \eta < R,\quad \pi < \tau < 2 \pi,\\
     \varphi^0&=-u^{0}(B_{x}(\tau), B_{y}(\tau)) \text{ at } \overline{\eta}=0,\\
     \varphi^0 &\rightarrow 0, \quad \text{as } \overline{\eta} \rightarrow \infty,
\end{split}
\end{equation}
where
\begin{align}\notag
    B_{x}(\tau)=a\cosh R \cos \tau,\\
    B_{y}(\tau)=a\sinh R \sin \tau.
\end{align}
Rather than seeking an explicit solution for the corrector equation, we use the Taylor series expansion for computational convenience to obtain a solution profile for \eqref{e:cor_elliptic_01},
\begin{equation} \label{e:cor_elliptic_02}
\begin{split}
     -\varphi^0_{\overline{\eta \eta}}+&\left(a
     \left(\frac{e^{R}+e^{-R}}{2}+\epsilon
     \left(\frac{e^{-R}-e^{R}}{2}\right)
     \overline{\eta}+\frac{\epsilon^{2}}{2!}
     \left(\frac{e^{R}+e^{-R}}{2}\right)
     \overline{\eta}^{2}+..          \right)
     \sin \tau\right) \varphi^0_{\overline{\eta}}=0.\\
\end{split}
\end{equation}
By identifying the dominant terms in equation \eqref{e:cor_elliptic_02}, we derive the approximate corrector equations,
{
\begin{equation} 
\begin{split}
     -\varphi^0_{\overline{\eta \eta}}+ \left (a \left(\frac{e^{R}+e^{-R}}{2} \right)\sin \tau \right) \varphi^0_{\overline{\eta}}&=0, \text{  for  } 0 < \eta < R,\quad \pi < \tau < 2 \pi,\\
     \varphi^0&=-u^{0}( B_{x}(\tau), B_{y}(\tau)) \text{ at } \overline{\eta}=0,\\
     \varphi^0 &\rightarrow 0, \quad \text{as } \overline{\eta} \rightarrow \infty.
\end{split}
\end{equation}
}
An explicit solution can be calculated as
\be \label{e:elliptic_cor}
     \bar{\varphi}^0=-u^{0}( B_{x}(\tau), B_{y}(\tau)) \exp\left({\frac{A\sin\tau}{\varepsilon   }\eta}\right)\chi_{[\pi,2\pi]},
\ee
where $\chi$ stands for the characteristic function.
We introduce an approximate form to satisfy the boundary condition,
\be
\bar \varphi^0 =-u^{0}( B_{x}(\tau), B_{y}(\tau))\exp\left({\frac{A\sin\tau}{\varepsilon   }\eta}\right)\chi_{[\pi,2\pi]}\delta(\eta),
\ee
where $\delta(\eta)$ is a smooth cut-off function such that $\delta(\eta) = 1$ for $\eta \in [0, R/2]$ and $=0$ for $\eta \in [3R/4, 1]$.

A convergence analysis for the boundary layer problem is essential to ensure that the proposed corrector accurately represents boundary layer behavior, thereby justifying the construction of the SL-PINN scheme. 
We introduce the compatibility conditions and the convergence theory supporting our SL-PINN scheme. Detailed proofs for the following lemmas and theorems are provided in the appendix. In this article, we assume that the following compatibility conditions hold:
\begin{equation}\label{compat:ellpise}
\frac{\partial^{p_{1}+p_{2}} f}{\partial x^{p_{1}} \partial y^{p_{2}}} = 0 \quad \text{at} \quad (\pm a\cosh R, 0) \quad \text{for} \quad 0 \leq 2p_{1} + p_{2} \leq 2 , \quad p_{1}, p_{2} \geq 0,
\end{equation}
\begin{lemma}
There exists a positive constant \( k \) such that, for integers \( l, n, s \geq 0, m = 0, 1, 2 \), and for \( 1 \leq p \leq \infty \),
\begin{equation}
\left| (\sin \tau)^{-l} \left( \frac{\eta}{\varepsilon} \right)^n  \frac{\partial^{s+m} \varphi^{0}}{\partial \eta^s \partial \tau^m} \right|_{L_p(D)} \leq \kappa \sup_{\tau} |a_{0,h}(\tau)| \varepsilon^{\frac{1}{p}-s},
\end{equation}
where \( h = -s + m + l + n + 1 \).
The notation \( a_{0,q} \) follows the same convention as outlined in the \textit{Notation convection} of \cite{JUNG201188}.
\end{lemma}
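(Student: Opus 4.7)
The plan is to differentiate the explicit formula \eqref{e:elliptic_cor} by hand, identify the schematic form of each resulting term, and reduce the $L^p(D)$ estimate to a Gamma-type integral via a boundary-layer substitution.

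First, set $\psi(\eta,\tau) := \exp(A\sin\tau\,\eta/\varepsilon)\chi_{[\pi,2\pi]}(\tau)$ and $U(\tau) := -u^0(B_x(\tau),B_y(\tau))$, so that $\varphi^0 = U(\tau)\psi(\eta,\tau)$. Because $\psi_\eta = (A\sin\tau/\varepsilon)\psi$ and $\psi_\tau = (A\eta\cos\tau/\varepsilon)\psi$, repeated application of Leibniz shows that $\partial_\eta^s\partial_\tau^m\varphi^0$ is a finite sum of terms of the schematic form
\begin{equation*}
c_{q,j,k,\rho}\,U^{(q)}(\tau)\,(\sin\tau)^{s-k}(\cos\tau)^{\rho}\,(\eta/\varepsilon)^{j}\,\varepsilon^{-s}\,\psi,
\end{equation*}
summed over non-negative integers $q,j,k,\rho$ constrained by $q+j+k\le m$ and $\rho\le m$. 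Multiplying by $(\sin\tau)^{-l}(\eta/\varepsilon)^n$ gives the same expression with $s-k$ replaced by $s-k-l$ and $j$ by $n+j$. Each resulting product is grouped as $a_{0,h}(\tau)\,(\sin\tau)^{s-k-l}(\eta/\varepsilon)^{n+j}\varepsilon^{-s}\psi$, where $a_{0,h}$ collects the smooth factor $U^{(q)}(\tau)(\cos\tau)^{\rho}$ together with its vanishing order at $\tau=\pi,2\pi$; in the Notation convention of \cite{JUNG201188}, tracking the worst index across the sum pins down $h=-s+m+l+n+1$.

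Second, fix $\tau\in(\pi,2\pi)$ and substitute $\zeta:=-A\sin\tau\,\eta/\varepsilon$ (note $\sin\tau<0$ here), so $d\eta=\varepsilon/(A|\sin\tau|)\,d\zeta$. Setting $\alpha:=s-k-l$ and $\beta:=n+j$, the inner $\eta$-integral becomes
\begin{equation*}
\int_0^R (\eta/\varepsilon)^{\beta p}\,e^{pA\sin\tau\,\eta/\varepsilon}\,d\eta
=\frac{\varepsilon}{(A|\sin\tau|)^{\beta p+1}}\int_0^{AR|\sin\tau|/\varepsilon}\zeta^{\beta p}e^{-p\zeta}\,d\zeta
\le \frac{\kappa\,\varepsilon}{|\sin\tau|^{\beta p+1}},
\end{equation*}
by bounding the incomplete Gamma integral by $\Gamma(\beta p+1)/p^{\beta p+1}$. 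The outer $\tau$-integral then combines $|\sin\tau|^{\alpha p}$ with $|\sin\tau|^{-(\beta p+1)}$ to yield $|\sin\tau|^{(\alpha-\beta)p-1}$; the would-be divergence at $\tau=\pi,2\pi$ is absorbed by the vanishing of $a_{0,h}$ guaranteed by \eqref{compat:ellpise}, giving $\int_\pi^{2\pi}|a_{0,h}(\tau)|^p|\sin\tau|^{(\alpha-\beta)p-1}\,d\tau\le \kappa\,\sup_\tau|a_{0,h}(\tau)|^p$. Taking the $1/p$-th power and combining with the prefactor $\varepsilon^{-s}$ produces exactly $\kappa\,\sup_\tau|a_{0,h}(\tau)|\,\varepsilon^{1/p-s}$. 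The case $p=\infty$ follows from the same substitution together with the elementary bound $\sup_{x\ge 0}x^\beta e^{-x}\le\beta^\beta e^{-\beta}$.

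The main obstacle is combinatorial rather than analytic: one must verify that across every branch of the Leibniz expansion the index on the governing coefficient collapses to the single value $h=-s+m+l+n+1$ instead of a mix of smaller or larger indices, which requires careful bookkeeping of the $(q,j,k,\rho)$-contributions and systematic use of the convention in \cite{JUNG201188} whereby extra vanishing orders at the characteristic points are absorbed into a higher-index $a_{0,\cdot}$. Once that bookkeeping is in place, the remaining estimate is the Laplace-type integral displayed above, and the constant $\kappa$ depends only on $A$, $R$, $s$, $m$, $l$, $n$, $p$.
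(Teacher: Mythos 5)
Your overall strategy---differentiate the explicit corrector, reduce the $\eta$-integral to a Laplace/Gamma-type integral, and control the degeneracy at the characteristic points through the $a_{0,q}$ convention---is the same as the paper's, but your bookkeeping of the powers of $\sin\tau$ is inconsistent at exactly the point where the index $h=-s+m+l+n+1$ is decided, and your final displayed $\tau$-integral bound is false as written. You keep $(\sin\tau)^{s-k-l}$ and $(\eta/\varepsilon)^{n+j}$ outside the coefficient while asserting that the coefficient already carries the index $h$; in the convention of \cite{JUNG201188} that index counts the $\tau$-derivatives of $v^0$ \emph{plus} the negative powers of $\sin\tau$ folded into $a_{0,\cdot}$, so a coefficient consisting only of $U^{(q)}(\cos\tau)^{\rho}$ with $q\le m$ has index at most $m$, not $h$. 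Then, after your substitution $\zeta=-A\sin\tau\,\eta/\varepsilon$ turns the $\eta$-integral into $\kappa\varepsilon/|\sin\tau|^{\beta p+1}$, you are left with $\int_{\pi}^{2\pi}|a_{0,h}|^{p}|\sin\tau|^{(\alpha-\beta)p-1}\,d\tau$ with $\alpha-\beta=s-k-l-n-j$, and the claimed bound by $\kappa\sup_{\tau}|a_{0,h}|^{p}$ fails whenever $\alpha\le\beta$ (e.g.\ $l$ or $n$ large), since the weight is then non-integrable; rescuing it by ``the vanishing of $a_{0,h}$'' would produce the sup of a \emph{higher}-index coefficient and destroy the stated form of the estimate. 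Note also that the paper's lemma invokes no compatibility condition: that hypothesis enters only in the next lemma, to show $\sup_{\tau}|a_{0,h}|<\infty$.

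The repair, which is what the paper does, is to normalize before integrating: write the polynomial part in the boundary-layer variable $(-\sin\tau)\bar\eta$ rather than $\bar\eta=\eta/\varepsilon$, i.e.
\begin{equation*}
(\sin\tau)^{-l}\,\bar\eta^{\,n}\,\frac{\partial^{s+m}\varphi^{0}}{\partial\eta^{s}\partial\tau^{m}}
= a_{0,-s+m}(\tau)\,(\sin\tau)^{-1-l-n}\sum_{k=0}^{m}(-\sin\tau)\bigl((-\sin\tau)\bar\eta\bigr)^{k+n}\varepsilon^{-s}\exp\bigl(A\sin\tau\,\bar\eta\bigr)\chi_{[\pi,2\pi]}(\tau),
\end{equation*}
absorb the whole factor $(\sin\tau)^{-1-l-n}$ into the coefficient (this is precisely what raises the index from $-s+m$ to $h=-s+m+l+n+1$), and reserve exactly one positive power $(-\sin\tau)$ outside. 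Since $x^{k+n}e^{-x}\le\kappa e^{-cx}$, the remaining integral is $\int_{\pi}^{2\pi}\int_{0}^{R}(-\sin\tau)^{p}\exp\bigl(cpA\sin\tau\,\eta/\varepsilon\bigr)\,d\eta\,d\tau\le\kappa\varepsilon$, with no singularity at $\tau=\pi,2\pi$: the $\eta$-integration now costs only a single factor $|\sin\tau|^{-1}$, which the reserved $(-\sin\tau)^{p}$ with $p\ge1$ absorbs. With that rearrangement your Gamma-integral computation and your treatment of the case $p=\infty$ go through essentially verbatim.
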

\begin{lemma}
If \( q \leq 1 \) or if the compatibility conditions (\ref{e:append_comp_cond}) hold for \( 2 + 3n \geq -2 + q \geq 0 \), then \( a_{0,q}(\tau) \) is bounded for \( \tau \in [\pi, 2\pi] \).
\end{lemma}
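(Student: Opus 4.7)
The plan is to analyze the explicit form of $a_{0,q}(\tau)$ coming from the asymptotic expansion of the corrector near the lower semi-ellipse. Working from the corrector equation \eqref{e:cor_elliptic_01} and its approximation \eqref{e:cor_elliptic_02}, successive corrections at higher order in $\ep$ involve repeated application of the operator $\pa_{\bar\eta}^{-2}$ followed by multiplication by $\sin\tau$-dependent coefficients from the Taylor expansion of $a\cosh(R-\ep\bar\eta)\sin\tau$. Integrating these formulas against the boundary data $-u^0(B_x(\tau),B_y(\tau))$ produces the coefficients $a_{0,q}(\tau)$ as finite linear combinations of tangential derivatives $\pa_\tau^j u^0(B_x(\tau),B_y(\tau))$ divided by powers of $A\sin\tau$. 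Concretely, I would first write down the recursion defining the $a_{0,q}$'s and extract the worst negative power of $\sin\tau$ appearing at level $q$; a direct inspection of \eqref{e:cor_elliptic_02} suggests that this worst power is controlled by $(\sin\tau)^{-(q-1)}$ for $q\ge 1$, so for $q\leq 1$ no singular factor appears and boundedness on $[\pi,2\pi]$ is immediate from the smoothness of $u^0$ restricted to $\partial\Omega$.

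For $q > 1$, the key step is to show that the apparent singularity at $\tau=\pi$ and $\tau=2\pi$, where $\sin\tau=0$ and the curve meets the characteristic points $(\pm A,0)$, is removable. I would convert the tangential derivatives $\pa_\tau^j u^0(B_x(\tau),B_y(\tau))$ into Cartesian derivatives of $u^0$ evaluated at the boundary via the chain rule using $B_x'(\tau) = -A\sin\tau$ and $B_y'(\tau) = B\cos\tau$; each factor of $\sin\tau$ in the denominator can then be absorbed by a vanishing Cartesian derivative of $u^0$ at $(\pm A,0)$ provided the compatibility conditions \eqref{compat:ellpise} enforce the cancellation. Counting the orders, I expect that exactly the hypothesis $2+3n\geq -2+q\geq 0$ matches the number of mixed Cartesian derivatives that must vanish to cancel all singular factors arising in $a_{0,q}$, which is how the stated range of $q$ is obtained.

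The proof then reduces to two verifications. First, a Taylor expansion of the numerator $\pa_\tau^j u^0$ at $\tau=\pi$ (and symmetrically at $\tau=2\pi$) in powers of $\sin\tau$, using the compatibility of $f$ at $(\pm A,0)$ together with the transport equation \eqref{e:ellipse_limit} to propagate the vanishing from the forcing to $u^0$. Second, a matching count showing that the leading nonzero Taylor coefficient of the numerator is of order at least $(\sin\tau)^{q-1}$, so that the ratio remains bounded (in fact continuous) on $[\pi,2\pi]$. A short L'Hospital-style argument near the endpoints completes the boundedness claim.

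The main obstacle will be the bookkeeping: tracking precisely how many factors of $(\sin\tau)^{-1}$ are generated at each recursion step in \eqref{e:cor_elliptic_02}, and matching these against the order of vanishing produced by the compatibility conditions \eqref{compat:ellpise}. This is the combinatorial heart of the argument, and it is where the specific bound $2+3n \ge -2+q \ge 0$ must be reconciled with the recursion; an induction on $q$ organized by the structure of the Taylor expansion of $\cosh(R-\ep\bar\eta)$ appears to be the cleanest way to carry it out, with the base case $q\leq 1$ supplying the trivial bound and the inductive step consuming one additional compatibility condition per three unit increase in $q$, consistent with the $2+3n$ term in the hypothesis.
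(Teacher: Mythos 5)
Your plan follows essentially the same route as the paper's proof: reduce the claim to bounding $(\sin\tau)^{-(q-r)}\,d^{r}v^{0}/d\tau^{r}$ for $v^{0}(\tau)=u^{0}(B_{x}(\tau),B_{y}(\tau))$, convert tangential into Cartesian derivatives of $u^{0}$ by the chain rule, push the vanishing of $f$ at the characteristic points through the explicit transport solution $u^{0}(x,y)=\int_{y}^{C_{u}(x)}f(x,s)\,ds$ to cancel the $(\sin\tau)^{-1}$ factors, and close the recursion by induction. One small correction: for $q=1$ the term $v^{0}/\sin\tau$ does carry a singular factor, and its boundedness comes not from smoothness of $u^{0}$ restricted to $\partial\Omega$ but from the collapse of the integration interval in the explicit formula for $v^{0}$, which vanishes like $\sin\tau$ at $\tau=\pi,2\pi$ with no compatibility condition needed.
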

\begin{theorem}\label{conver_thm_ellipse}
With the compatibility condition (\ref{compat:ellpise}), The following estimate holds:
\begin{equation}
|u^\varepsilon - u^0 - \bar{\varphi}^0|_{L^2(\Omega)} + \sqrt{\varepsilon} |u^\varepsilon - u^0 - \bar{\varphi}^0|_{H^1(\Omega)} \leq \kappa \sqrt{\varepsilon}.
\end{equation}
where \(u^\varepsilon\), \(u^0\) is the solution of (\ref{e:ellipse}), (\ref{e:ellipse_limit}), and \(\bar{\varphi}^0\) is the corrector in (\ref{e:cor_elliptic_01}).
\end{theorem}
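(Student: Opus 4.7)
The plan is to control the remainder \(w^\varepsilon := u^\varepsilon - u^0 - \bar\varphi^0\) through an energy estimate for the singularly perturbed equation it satisfies, with small right-hand side and homogeneous boundary data. The strategy parallels the circular-domain proof of \cite{hong2014numerical}, adapted to elliptic coordinates.

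First, I would derive the governing equation for \(w^\varepsilon\). Using \(L_\varepsilon u^\varepsilon = f\) together with the limit identity \(-u^0_y = f\), one obtains
\begin{equation*}
L_\varepsilon w^\varepsilon = \varepsilon\Delta u^0 - L_\varepsilon \bar\varphi^0 =: R^\varepsilon.
\end{equation*}
The boundary data are homogeneous: on \(\Gamma_+\) the factor \(\chi_{[\pi,2\pi]}\) forces \(\bar\varphi^0 = 0\), while on \(\Gamma_-\) one has \(\bar\varphi^0 = -u^0\) because \(\delta(0) = 1\) and \(\chi = 1\); the cut-off on the inner boundary \(\eta = R\) leaves only an exponentially small correction.

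Second, I would bound \(R^\varepsilon\) in \(L^2(\Omega)\). The smooth piece \(\varepsilon\Delta u^0\) is directly \(O(\varepsilon)\) under the regularity implied by \eqref{compat:ellpise}. For the corrector piece, I would pass to the elliptic coordinates \((\eta,\tau)\), where \(L_\varepsilon\) becomes \(H^{-1}P_\varepsilon\), and expand the coefficients \(a\cosh(R-\eta)\sin\tau\) and \(a\sinh(R-\eta)\cos\tau\) in Taylor series about \(\eta = 0\) as in \eqref{e:cor_elliptic_02}. By construction of the approximate corrector equation, the leading-order contributions of \(P_\varepsilon \bar\varphi^0\) cancel; the surviving residual is a finite sum of terms of the form \((\sin\tau)^{-l}(\eta/\varepsilon)^n\partial^{s+m}_{\eta^s\tau^m}\varphi^0\), each controlled by Lemma 4.1. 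The coefficients \(a_{0,q}(\tau)\) appearing through the boundary values \(u^0(B_x(\tau), B_y(\tau))\) remain bounded on \([\pi,2\pi]\) by Lemma 4.2 and the compatibility condition \eqref{compat:ellpise} enforcing the vanishing of \(f\) at \((\pm A, 0)\). Contributions involving derivatives of the cut-off \(\delta\) are supported on \([R/2, 3R/4]\), where \(\bar\varphi^0\) has decayed exponentially. Summing the resulting powers of \(\varepsilon\) gives \(\|R^\varepsilon\|_{L^2(\Omega)} \leq \kappa\sqrt\varepsilon\).

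Third, I would close the argument by testing \(L_\varepsilon w^\varepsilon = R^\varepsilon\) against \(w^\varepsilon\). The convection term \(\int w^\varepsilon_y\, w^\varepsilon\) vanishes due to the homogeneous boundary data, leaving \(\varepsilon\|\nabla w^\varepsilon\|_{L^2}^2 = (R^\varepsilon, w^\varepsilon)\). Splitting \(R^\varepsilon\) into its boundary-layer piece (supported in a strip of width \(O(\varepsilon)\) near \(\Gamma_-\)) and its outer piece, and using the strip inequality \(\|w^\varepsilon\|_{L^2(\mathrm{strip})} \leq C\varepsilon\|\nabla w^\varepsilon\|_{L^2}\) (a consequence of \(w^\varepsilon = 0\) on \(\eta = 0\)) together with Cauchy--Schwarz and Poincar\'e on the outer part, one obtains \((R^\varepsilon, w^\varepsilon) \leq \kappa\varepsilon\|\nabla w^\varepsilon\|_{L^2}\). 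This yields \(\|\nabla w^\varepsilon\|_{L^2} \leq \kappa\), hence \(\sqrt\varepsilon\,|w^\varepsilon|_{H^1} \leq \kappa\sqrt\varepsilon\). The sharp \(L^2\) estimate \(\|w^\varepsilon\|_{L^2} \leq \kappa\sqrt\varepsilon\) then follows by a refined bound exploiting the small support of \(R^\varepsilon_{\mathrm{lay}}\) and the smallness of \(R^\varepsilon_{\mathrm{out}}\) jointly, along the lines of \cite{hong2014numerical}.

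The main obstacle will be the bookkeeping in the second step. Unlike the circular case, where a single parameter governs the geometry, the elliptic case involves the two distinct quantities \(a\cosh R\) and \(a\sinh R\), producing extra surviving terms in the Taylor expansion of the coefficients of \(P_\varepsilon\); each of these must be matched to the template of Lemma 4.1. The degeneracy at the characteristic points \(\tau = \pi,\, 2\pi\), where \(\sin\tau \to 0\) and the exponential profile loses its decay, is what drives the \(\sqrt\varepsilon\) rate (rather than \(\varepsilon\), as in the channel case) and is precisely where the compatibility condition \eqref{compat:ellpise} is needed to keep \(a_{0,q}(\tau)\) bounded uniformly in \(\tau\).
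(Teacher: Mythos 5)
Your skeleton matches the paper's: set $w=u^\varepsilon-u^0-\bar\varphi^0$, observe $w\in H^1_0(\Omega)$, write the residual as $\varepsilon\Delta u^0 - L_\varepsilon\varphi^0 + L_\varepsilon(\varphi^0-\bar\varphi^0)$, and control the corrector contributions in elliptic coordinates through the pointwise/$L^p$ estimates of Lemma 4.1 (the $(\sin\tau)^{-l}(\eta/\varepsilon)^n\partial^{s+m}\varphi^0$ bounds), with the compatibility condition feeding into the boundedness of $a_{0,q}$ near the characteristic points and the cut-off terms being exponentially small. Your step 2 and your closing remarks about where the $\sqrt\varepsilon$ rate comes from are consistent with what the appendix actually does, in particular with the residual terms $-\varepsilon\varphi^0_{\tau\tau}$, $-a\sin\tau(\cosh R-\cosh(R-\eta))\varphi^0_\eta$, and $-a\sinh\eta\cos\tau\,\varphi^0_\tau$ that survive after the approximate corrector equation cancels the leading order.

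The genuine gap is in step 3. By testing against $w^\varepsilon$ alone you make the convection term vanish, which leaves only $\varepsilon\|\nabla w\|_{L^2}^2=(R^\varepsilon,w)$ and hence at best $\|\nabla w\|_{L^2}\le\kappa$; Poincar\'e then gives $\|w\|_{L^2}\le\kappa$, not $\kappa\sqrt\varepsilon$. The zeroth-order smallness is exactly the content of the theorem, and your proposed recovery (``a refined bound exploiting the small support of $R^\varepsilon_{\mathrm{lay}}$'') is not an argument: the effective support of the layer residual has width $O(\varepsilon/|\sin\tau|)$, which degenerates to the whole domain as $\tau\to\pi,2\pi$, so the uniform strip inequality you invoke does not apply where it is most needed. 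The paper avoids this entirely by taking the scalar product with $e^y w$ rather than $w$: integration by parts in the convection term then produces a positive multiple of $\int e^y w^2$, so the left-hand side becomes $\varepsilon|w|^2_{H^1}+|w|^2_{L^2}$ up to absorbable terms, and both estimates follow simultaneously once each piece of the right-hand side is bounded by $\kappa\sqrt\varepsilon\,(|w|_{L^2}+\sqrt\varepsilon|w|_{H^1})$. You need this weighted test function (or an equivalent device such as the substitution $w=e^{-y/2}\tilde w$) to close the $L^2$ part of the estimate; without it the stated theorem is not proved.
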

Under the compatibility conditions, Theorem (\ref{conver_thm_ellipse}) demonstrates that the solution of \eqref{e:ellipse} converges to the limit solution \eqref{e:ellipse_limit} in the \(L^2\) norm as \(\epsilon \rightarrow 0\). Building on these results, we ensure that the corrector function effectively represents the boundary layer profile. Consequently, this corrector function can be seamlessly integrated into our neural network scheme, specifically the SL-PINN.

\subsection{Numerical Simulations for Major Axis Parallel to \texorpdfstring{$x$-Axis}{x-Axis}}
We now introduce our SL-PINN of the form,
\begin{equation}\label{e:EPINN_scheme_ellipse01}
    \widetilde{v}(\eta,\tau ; \, {\blds \theta}) 
            =
            \left(\hat{v}(\eta,\tau,{\blds \theta})-\hat{v}(0,\tau,{\blds \theta})
            {\hat{ \varphi}^0(\eta,\tau)}\right)C(\eta,\tau),
\end{equation}
where $\hat{ \varphi}^0$ is given by 
\begin{equation}
     \hat{ \varphi}^0=\exp\left(\frac{A\sin \tau}{\ep} \eta \right) \chi_{[\pi, 2\pi]}(\tau) \delta(\eta),
\end{equation}
and $C(\eta,\tau)$ is the regularizing term given by
\begin{equation} \label{e:ellipse_reg01}
\begin{split}
    C(\eta,\tau)=\begin{cases}
    1-\left(\frac{R-\eta}{R}\right)^{3} , \text{ if } 0 < \tau < \pi\\
    1-\left(\frac{R-\eta}{R}\right)^{3}-\left(\frac{R-\eta}{R}\sin\tau \right)^3, \text{ if } \pi \leq \tau \leq 2\pi.
    \end{cases}
\end{split}
\end{equation} 
Then, the residual loss is defined by
\begin{equation} \label{e:ellipse_loss_first}
\begin{split}
    Loss=\left(\frac{1}{N}\sum_{i=0}^{N}
    |P_{\epsilon}\widetilde{v}((\eta_{i},\tau_{i},{\blds \theta}))-f |^{p} \right)^{1/p} \quad  \text{ for }(\eta_{i},\tau_{i}) \in D,
\end{split}    
\end{equation} 
where $p=1,2$. 
To account for the boundary layer behavior occurring near the boundary where \(\pi \leq \tau \leq 2 \pi\), we divide the residual loss \eqref{e:ellipse_loss_first} into two sections: \(0 < \tau < \pi\) and \(\pi \leq \tau \leq 2 \pi\). Calculating the residual loss is straightforward for the range \(0 < \tau < \pi\)
\begin{equation} \label{x_upper_elipse_cal}
\begin{split}
    &P_{\epsilon}\widetilde{v}(\eta,\tau,{\blds \theta}) - f \\
    & = -\ep \left( \hat{v}_{\eta \eta}(\eta,\tau,{\blds \theta}) \left(1 - \left(\frac{R-\eta}{R}\right)^3\right) - 2\hat{v}_{\eta}(\eta,\tau,{\blds \theta}) \left(-3 \frac{(R-\eta)^2}{R^3}\right) + \hat{v}(\eta,\tau,{\blds \theta}) \left(-6 \frac{R-\eta}{R^3}\right) \right. \\
    & \quad + \left. \hat{v}_{\tau \tau}(\eta,\tau,{\blds \theta}) \left(1 - \left(\frac{R-\eta}{R}\right)^3\right) \right) \\
    & \quad - (a \cosh (R-\eta) \sin \tau) \left( \hat{v}_{\eta}(\eta,\tau,{\blds \theta}) \left(1 - \left(\frac{R-\eta}{R}\right)^3\right) + \hat{v}(\eta,\tau,{\blds \theta}) \left(-3 \frac{(R-\eta)^2}{R^3}\right) \right) \\
    & \quad - (a \sinh (R-\eta) \cos \tau) \hat{v}_{\tau}(\eta,\tau,{\blds \theta}) \left(1 - \left(\frac{R-\eta}{R}\right)^3\right) - f,
\end{split}
\end{equation}
for $0 < \eta \leq R \;, 0 < \tau < \pi$.
When considering \(\pi \leq \tau \leq 2 \pi\), incorporating the boundary layer element in \eqref{e:EPINN_scheme_ellipse01} and including the regularizing term in \eqref{e:ellipse_reg01} result in a more intricate form of the residual loss, as shown below
\begin{equation} \label{x_lower_elipse_cal}
\begin{split}
    &P_{\epsilon}\widetilde{v}(\eta,\tau,{\blds \theta}) - f = \\
    &-\ep \left( \hat{v}_{\eta \eta}(\eta,\tau,{\blds \theta})C + 2\hat{v}_{\eta}(\eta,\tau,{\blds \theta})C_{\eta} + \hat{v}(\eta,\tau,{\blds \theta})C_{\eta \eta} + \hat{v}_{\tau \tau}(\eta,\tau,{\blds \theta})C + 2\hat{v}_{\tau}(\eta,\tau,{\blds \theta})C_{\tau} + \hat{v}(\eta,\tau,{\blds \theta})C_{\tau \tau} \right) \\
    &+(a \cosh (R-\eta) \sin \tau) \left( \hat{v}_{\eta}(\eta,\tau,{\blds \theta})C + \hat{v}(\eta,\tau,{\blds \theta})C_{\eta} \right) \\
    &-(a \sinh (R-\eta) \cos \tau) \left( \hat{v}_{\tau}(\eta,\tau,{\blds \theta})C + \hat{v}(\eta,\tau,{\blds \theta})C_{\tau} \right) \\
    &+\ep \hat{v}(0,\tau,{\blds \theta}) \left[ \delta_{\eta \eta}C + 2 \delta_{\eta}C_{\eta} + \delta C_{\eta \eta} \right] \exp\left( \frac{A \sin \tau}{\ep} \eta \right) \\
    &+\hat{v}(0,\tau,{\blds \theta}) \left[ 2A \sin \tau \delta_{\eta}C + 2A \sin \tau \delta C_{\eta} \right] \exp\left( \frac{A \sin \tau}{\ep} \eta \right) \\
    &+a (\cosh (R-\eta) \sin \tau) \hat{v}(0,\tau,{\blds \theta}) \left[ \delta_{\eta} C + \delta C_{\eta} \right] \exp\left( \frac{A \sin \tau}{\ep} \eta \right) \\
    &+\ep \delta \left[ \hat{v}_{\tau \tau}(\eta,\tau,{\blds \theta})C + 2\hat{v}_{\tau}(\eta,\tau,{\blds \theta})C_{\tau} + \hat{v}(\eta,\tau,{\blds \theta})C_{\tau \tau} \right] \exp\left( \frac{A \sin \tau}{\ep} \eta \right) \\
    &-\delta \left[ A \eta \left( (\sin \tau \hat{v}(\eta,\tau,{\blds \theta}) - 2\cos \tau \hat{v}_{\tau}(\eta,\tau,{\blds \theta}))C - 2(\cos \tau \hat{v}(\eta,\tau,{\blds \theta}))C_{\tau} \right) \right] \exp\left( \frac{A \sin \tau}{\ep} \eta \right) \\
    &+\frac{\hat{v}(0,\tau,{\blds \theta})}{\ep} \delta C \left[ (A \sin \tau)^{2} - a \cosh (R-\eta) \sin^2 \tau A + (A\eta \cos \tau)^{2} + a \sinh (R-\eta) \cos^2 \tau A \eta \right]\\ 
    &\times \exp\left( \frac{A \sin \tau}{\ep} \eta \right) - f,
\end{split}
\end{equation}
for $0 < \eta \leq R$ and $\pi \leq \tau \leq 2\pi$.
Similar to the circular domain case, calculating the residual loss in \eqref{x_lower_elipse_cal} involves a large term, such as \(\epsilon^{-1}\), which can negatively impact the loss optimization process.

To make the computation in equation \eqref{x_lower_elipse_cal} feasible, we extract the dominant term in \(\epsilon\), specifically the term of \(\mathcal{O}(\epsilon^{-1})\). Hence, we set the dominant term $\psi(\eta,\tau,{\blds \theta})$ such that
\begin{equation}
\begin{split} 
    \psi(\eta,\tau,{\blds \theta}):=\frac{\hat{v}(0,\tau,{\blds \theta})}{\ep}\delta(\eta) (1-(\frac{R-\eta}{R})^{3}-(\frac{R-\eta}{R}\sin\tau)^3)[(A\sin\tau)^{2}-a\cosh (R-\eta)\sin^2\tau A\\
    +(A\eta\cos\tau)^{2}+a\sinh (R-\eta) \cos^2\tau A\eta]
    \exp\left( \frac{A\sin \tau}{\ep} \eta \right){\chi_{[\pi, 2\pi]}(\tau)}.
\end{split}
\end{equation}
When $p=1$, by the triangular inequality, the loss in \eqref{e:ellipse_loss_first} bounds
\begin{equation}\label{e:bound_ellipse00}
 Loss  \leq \frac{1}{N}\sum_{i=0}^{N}|\psi (\eta_{i},\tau_{i},{\blds \theta})|
 +  \frac{1}{N}\sum_{i=0}^{N} |P_{\epsilon}\widetilde{v}((\eta_{i},\tau_{i},{\blds \theta}))-\psi (\eta_{i},\tau_{i},{\blds \theta})-f|.
\end{equation}
The rightmost term does not involve a high-order term, such as \(\epsilon^{\alpha}\) where \(\alpha < 0\), because \(\psi\) already contains large terms like \(\epsilon^{-1}\). Therefore, this part can be computed using a conventional \(L^1\) or \(L^2\) loss. Assume that \(\boldsymbol{\theta}\) is fixed and that we select a sufficiently large number of sampling points \(N\) such as 
\begin{equation}\label{e:normal_bound_ellipse_x}
 \frac{1}{N}\sum_{i=0}^{N} \left|P_{\epsilon}\widetilde{v}(\eta_{i}, \tau_{i}, \boldsymbol{\theta}) - \psi(\eta_{i}, \tau_{i}, \boldsymbol{\theta}) - f\right| \approx \frac{1}{\pi}\int_{0}^{2\pi} \int_{0}^{1} \left|P_{\epsilon}\widetilde{v}(\eta, \tau, \boldsymbol{\theta}) - \psi(\eta, \tau, \boldsymbol{\theta}) - f\right| \, d\eta \, d\tau < \infty.
\end{equation}
Considering that \(\psi\) contains terms of \(\mathcal{O}(\epsilon^{-1})\), we utilize an \(L^1\) loss for handling \(\psi\).
For precise computations, we choose a sufficiently large number of sampling points, \(N\), such as
\begin{equation}\label{e:bound_ellipse01}
\begin{split}
     & \frac{1}{N}\sum_{i=0}^{N} \left|\psi(\eta_{i},\tau_{i},{\blds \theta}) \right| \approx \frac{1}{R\pi}\int^{2\pi}_{\pi} \int^{R}_{0} |\psi(\eta,\tau,{\blds \theta})|d \eta d \tau.
\end{split}
\end{equation}
\begin{comment}
    Note that, $\delta(r)$ is a smooth cut-off function such that $\delta(r) = 1$ for $r \in [3R/4 , R]$ and $=0$ for $r \in [0, R/2]$.
\end{comment}
The right-hand side of equation (\ref{e:bound_ellipse01}) is bounded by \(C\), as shown in Section \ref{subsec:plain}. Specifically, we obtain that
\begin{equation}\label{e:bound_ellipse02}
\begin{split}
    \frac{1}{R\pi}\int^{2\pi}_{\pi}\int^{R}_{0} |P_{\epsilon}\psi(\eta,\tau,{\blds \theta})|d \eta d \tau
     \leq C.
\end{split}
\end{equation}
Therefore, the loss \eqref{e:ellipse_loss_first} becomes nearly constant due to \eqref{e:normal_bound_ellipse_x} and \eqref{e:bound_ellipse02} when \(\epsilon\) is sufficiently small. This simplification facilitates feasible computations in the SL-PINN approach.

\begin{figure}[htbp]
     \centering
     \begin{subfigure}[b]{0.4\textwidth}
         \centering
         \includegraphics[width=\textwidth]{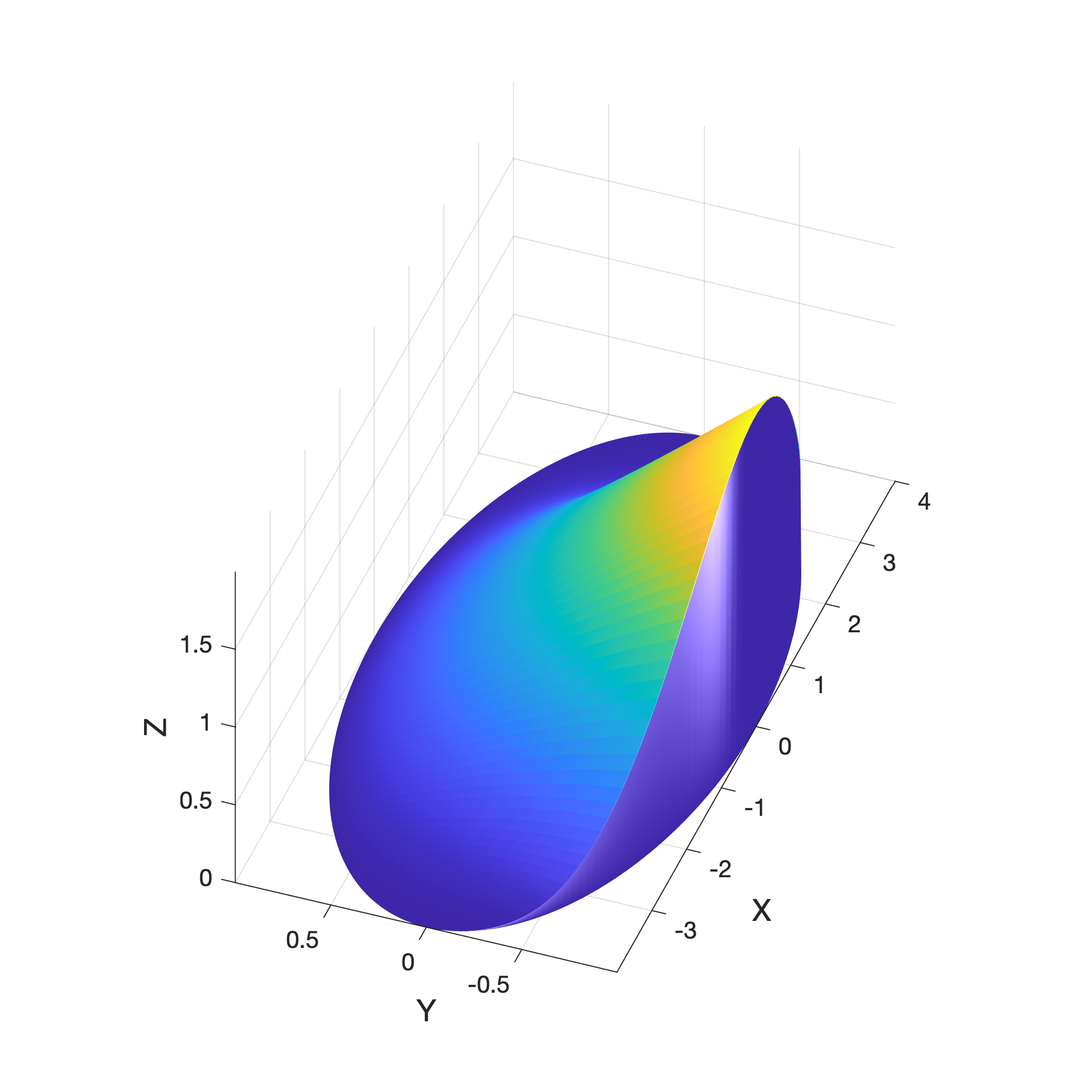}
         \caption{True solution}
         %\label{fig:y equals x}
     \end{subfigure}
     %\hfill
     \begin{subfigure}[b]{0.4\textwidth}
         \centering
         \includegraphics[width=\textwidth]{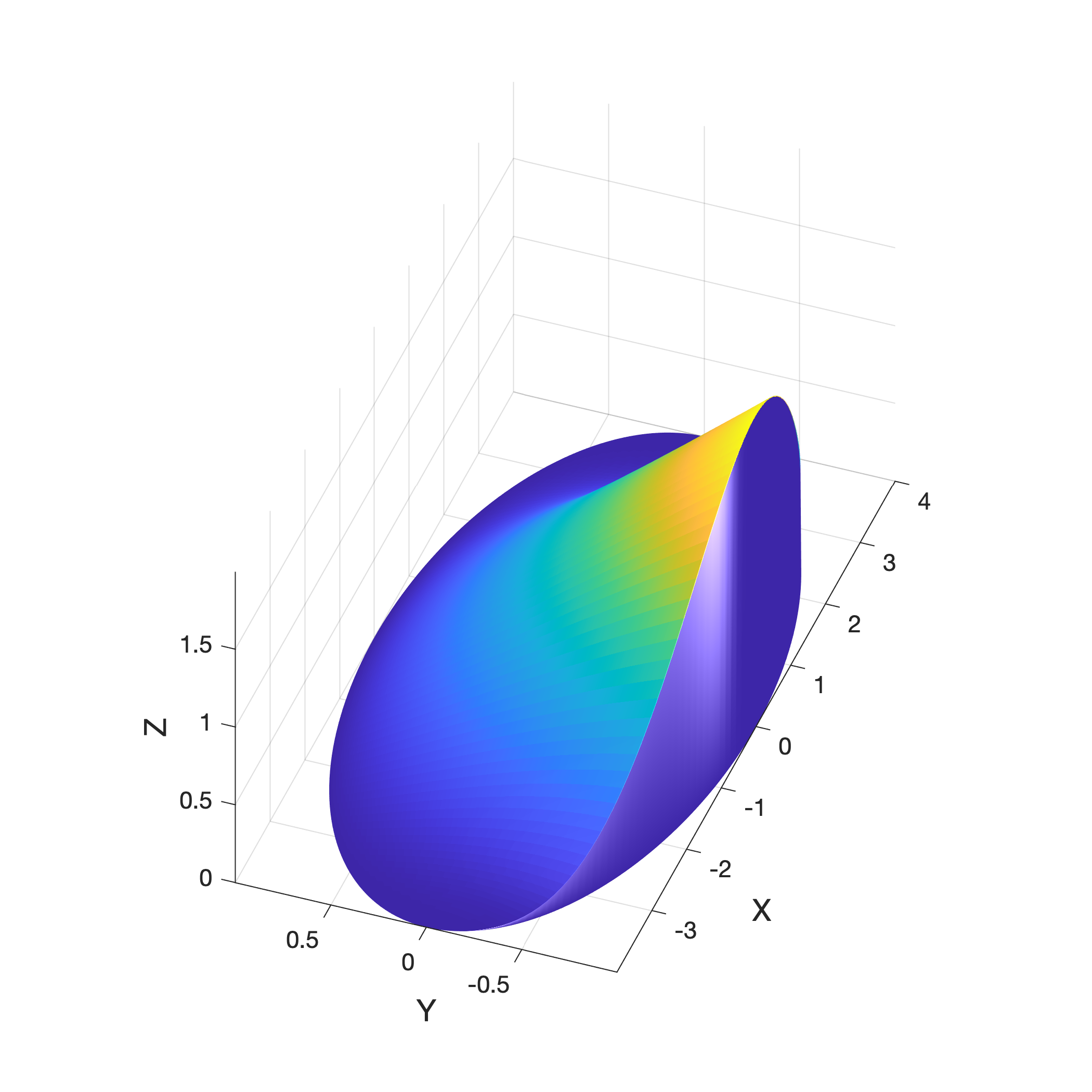}
         \caption{SL-PINN using $L^2$ training}
         %\label{fig:three sin x}
     \end{subfigure}
     %\hfill
     \begin{subfigure}[b]{0.4\textwidth}
         \centering
         \includegraphics[width=\textwidth]{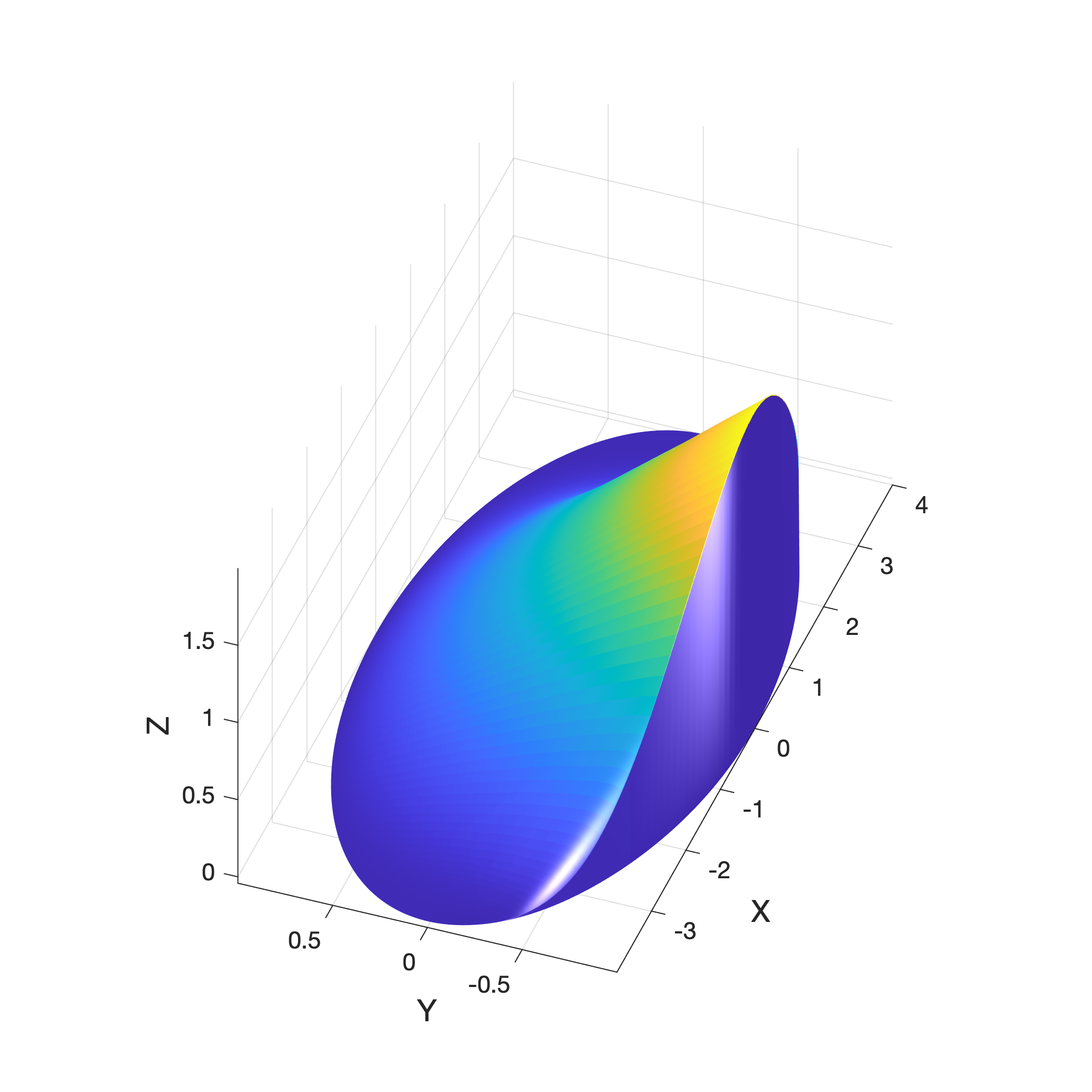}
         \caption{SL-PINN using $L^1$ training}
         %\label{fig:three sin x}
     \end{subfigure}
     %\hfill
     \begin{subfigure}[b]{0.4\textwidth}
         \centering
         \includegraphics[width=\textwidth]{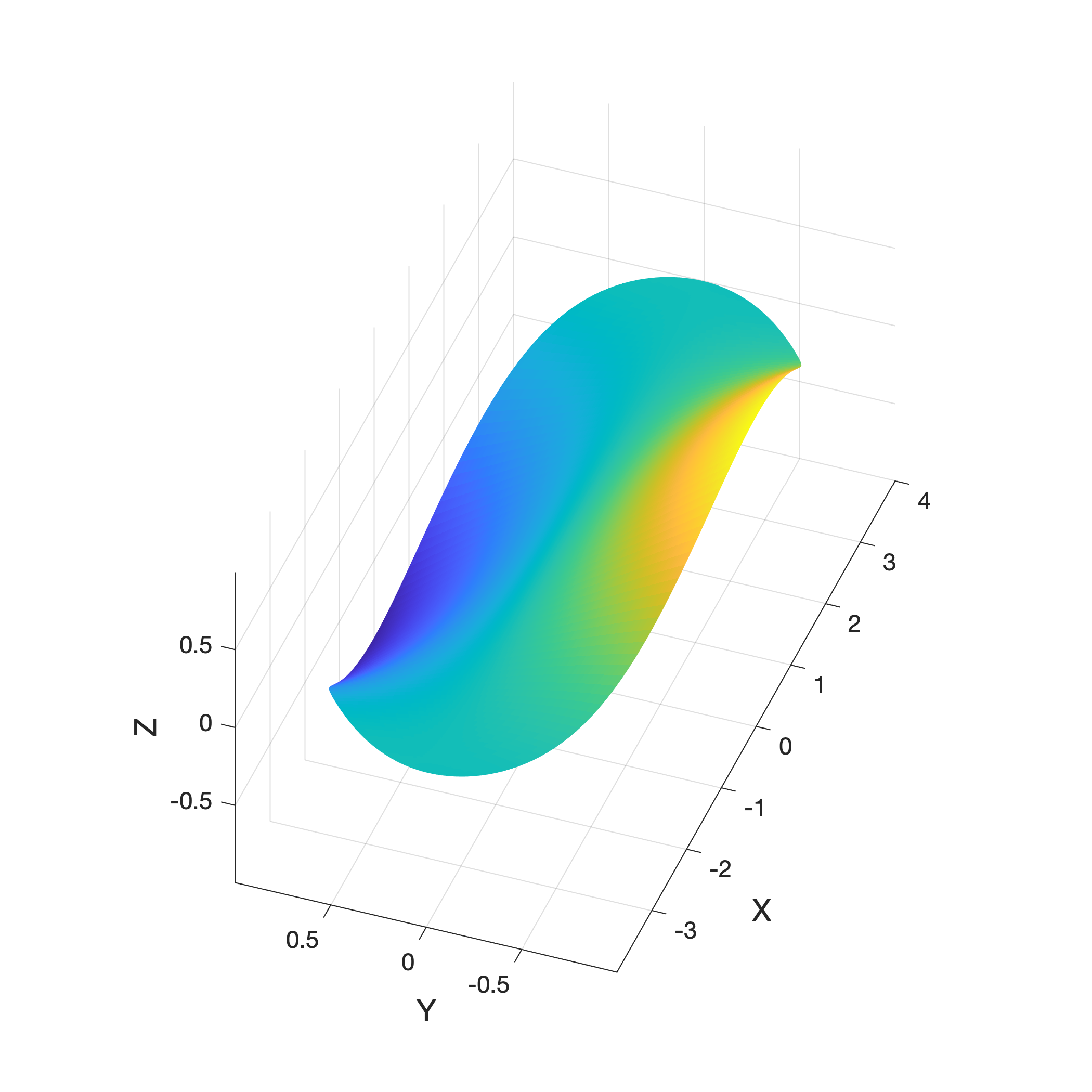}
         \caption{Conventional PINN}
         %\label{fig:three sin x}
     \end{subfigure}
     \caption{Numerical prediction of \eqref{e:ellipse} with $(A,B)=(4,1)$ and $\ep = 10^{-6}$ when $f={(1-({x}/{A})^2)^2}/{B}$. For our simulations, we select a uniform grid of $50$ discretized points in each of the $r$ and $\tau$ directions.} \label{fig6}
\end{figure}

\begin{figure}[htbp]
    \centering
    \begin{subfigure}[b]{0.45\textwidth}
        \centering
        \includegraphics[width=\textwidth]{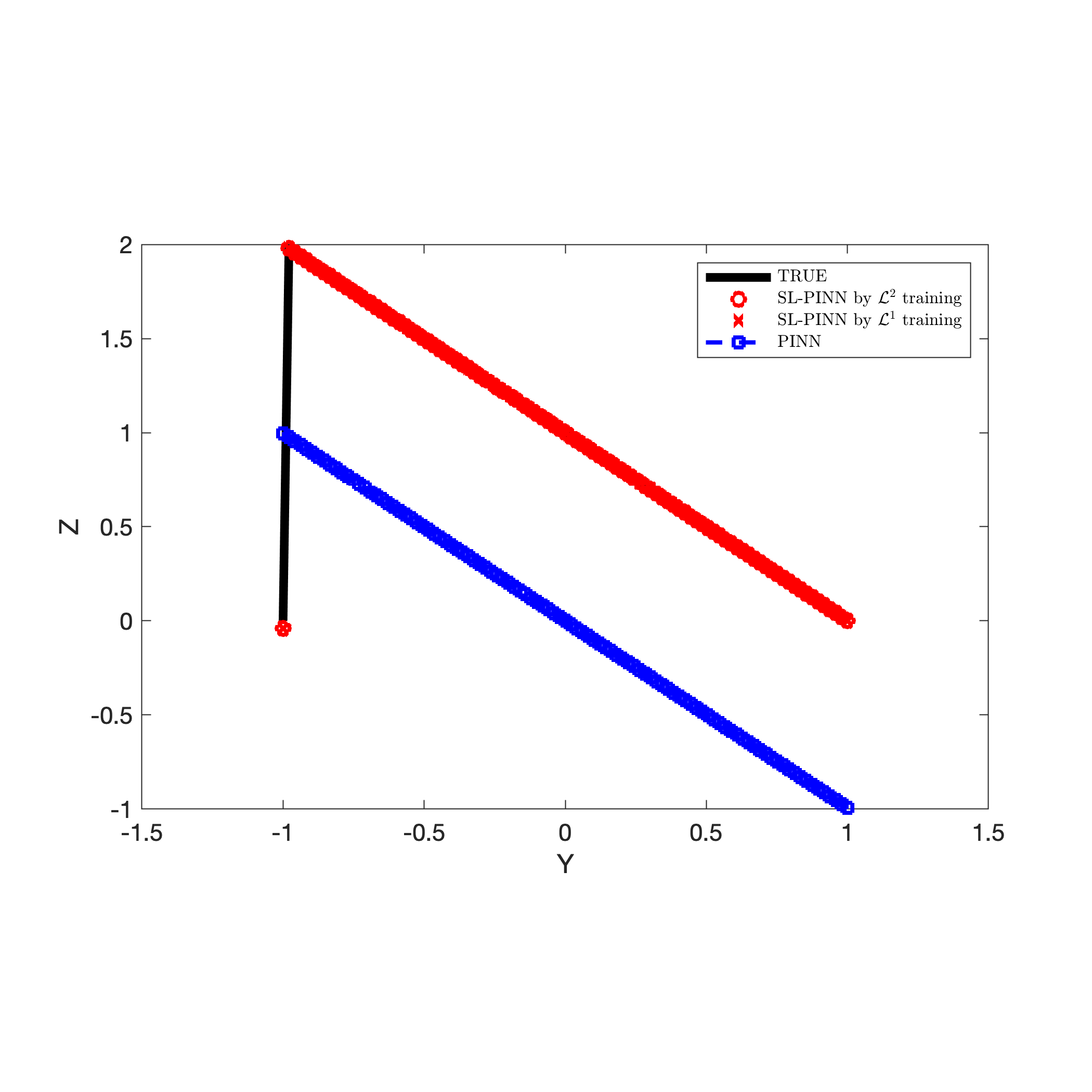}
        \caption{True soltuion at t=1}
         %\label{fig:y equals x}
    \end{subfigure}
     %\hfill
    \begin{subfigure}[b]{0.45\textwidth}
        \centering
        \includegraphics[width=\textwidth]{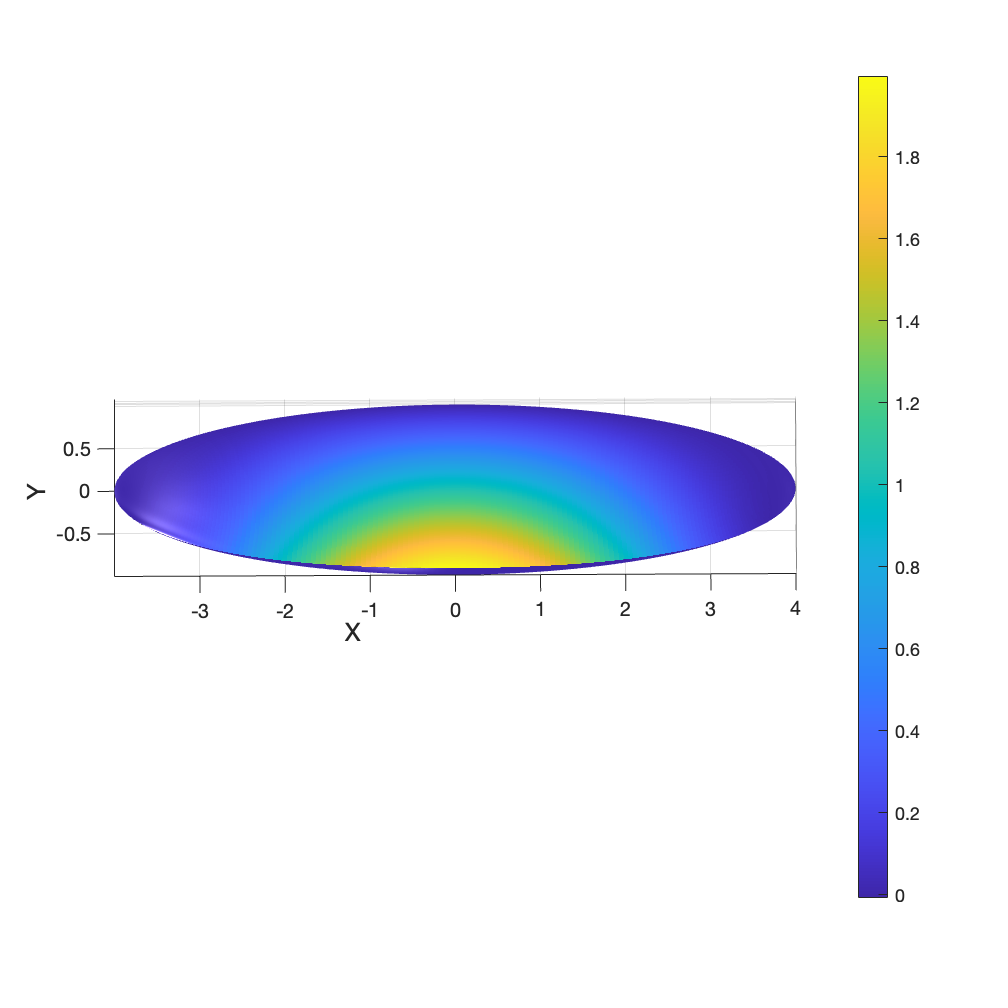}
        \caption{SL-PINN}
         %\label{fig:three sin x}
    \end{subfigure}
    \caption{The one-dimensional profile of predicted solutions in Figure \ref{fig6} along the line $\tau = \pi/2$ and solution surface projection profile.}\label{fig6-6}
\end{figure}

The effectiveness of our new approach is demonstrated through a series of numerical experiments presented in Figures \ref{fig6} and \ref{fig6-6}. In these experiments, we utilize a grid discretization consisting of 50 uniformly spaced points in both the \(\tau\) and \(r\) directions. 
Similar to the circular domain, when \(f\) does not vanish at the characteristic points \((\pm A, 0)\), the function \(f(x, y)\) becomes incompatible due to the appearance of singularities in its derivatives at these points. To maintain simplicity in this paper, we focus on the first compatibility condition.
We compare the performance of the standard five-layer PINN approximation with our SL-PINN approximation. Figure \ref{fig6} showcases the numerical solutions of \eqref{e:ellipse} with \((A,B) = (4,1)\) and \(\epsilon = 10^{-6}\) when \(f = \frac{(1-(x/A)^2)^2}{B}\). Figure \ref{fig6} clearly illustrates the conventional PINN method's failure to accurately approximate the solution to the singular perturbation problem.
In contrast, our new scheme exhibits significant superiority over the conventional PINN method. The numerical results presented in Figure \ref{fig6} and Table \ref{tab:1} provide compelling evidence of the substantial performance improvement achieved by our SL-PINN approach. Our semi-analytic SL-PINN, enriched with the corrector, consistently produces stable and accurate approximate solutions, regardless of the small parameter \(\epsilon\).
Figure \ref{fig6-6} offers a closer examination of the one-dimensional profile of predicted solutions at \(\tau = \pi/2\), facilitating a clear and direct comparison. Figure \ref{fig6-6} demonstrates the high accuracy of both the \(L^1\) and \(L^2\) training approaches in generating approximations. However, the conventional PINN method falls short in capturing the sharp transition near the boundary layer.

\subsection{Numerical Simulations for Major Axis Parallel to \texorpdfstring{$y$-Axis}{y-Axis}}
To begin with the case \(A < B\), we define the elliptic coordinates as $(\eta, \tau)$ as follows:
\begin{equation}
\begin{split}
    x=a\sinh (R-\eta) \cos \tau ,y=a\cosh (R-\eta) \sin \tau,
\end{split}
\end{equation}
where  \(0 \leq \eta < R\) and $\tau \in [0, 2 \pi]$.
By setting $u^{\ep}(x,y)= v^{\ep}(\eta,\tau)$ with  $a\sinh R =A$ and $a\cosh R =B$, we transform \eqref{e:ellipse} into the following form
\begin{equation}
\begin{split}
     P_{\epsilon} v^{\ep} :=-\epsilon(v^{\ep}_{\eta \eta}+v^{\ep}_{\tau \tau})+(a \sinh (R-\eta) \sin \tau)  v^{\ep}_{\eta}-(a\cosh (R-\eta) \cos \tau)v^{\ep}_{\tau} & = H f \quad \text{ in } D=[0,R)\times [0,2\pi]\\
      v^{\ep}(0,\tau)&=0 \text{ at } 0 \leq \tau \leq 2\pi,
\end{split}
\end{equation}
where $H=(a\sinh (R-\eta) \sin \tau )^{2}+(a \cosh (R-\eta) \cos \tau)^{2}$.
To derive the corrector equation of $\varphi$, we use the following stretched variable,
\begin{equation}
\begin{split}
    \overline{\eta}=&\frac{\eta}{\epsilon^{\alpha}}.
\end{split}
\end{equation}
We then obtain the corrector equation with $\alpha = 1$, which represents the thickness of the boundary layer, 
\begin{equation}\label{e:cor_elliptic}
\begin{split}
     -\varphi^0_{\overline{\eta \eta}}+\left (a \left(\frac{e^{R-\epsilon\overline{\eta}}-e^{\epsilon\overline{\eta}-R}}{2} \right)\sin \tau \right) \varphi^0_{\overline{\eta}}&=0, \text{  for  } 0 < \eta < R,\quad \pi < \tau < 2 \pi,\\
     \varphi^0&=-u^{0}(B_{x}(\tau),B_{y}(\tau)) \text{ at } \overline{\eta}=0,\\
     \varphi^0 &\rightarrow 0, \quad \text{as } \overline{\eta} \rightarrow \infty,
\end{split}
\end{equation}
where
\begin{align}\notag
    B_{x}(\tau)=a\sinh R \cos \tau,\\
    B_{y}(\tau)=a\cosh R \sin \tau.
\end{align}
Instead of seeking an explicit solution for the corrector equation, we use a Taylor series expansion for computational convenience to obtain a solution profile for 
\eqref{e:cor_elliptic},
\begin{equation}\label{e:cor_elliptic_02_y}
\begin{split}
     -\varphi^0_{\overline{\eta \eta}}+&\left(a
     \left(\frac{e^{R}-e^{-R}}{2}-\epsilon
     \left(\frac{e^{-R}+e^{R}}{2}\right)
     \overline{\eta}+\frac{\epsilon^{2}}{2!}
     \left(\frac{e^{R}-e^{-R}}{2}\right)
     \overline{\eta}^{2}-..          \right)
     \sin \tau\right) \varphi^0_{\overline{\eta}}=0.\\
\end{split}
\end{equation}
By identifying the dominant terms in equation \eqref{e:cor_elliptic_02_y}, we derive the approximate corrector equations,
\begin{equation}\label{e:ellipse_y_approximation}
\begin{split}
     -\varphi^0_{\overline{\eta \eta}}+\left (a \left(\frac{e^{R}-e^{-R}}{2} \right)\sin \tau \right) \varphi^0_{\overline{\eta}}&=0, \text{  for  } 0 < \eta < R,\quad \pi < \tau < 2 \pi,\\
     \varphi^0&=-u^{0}(B_{x}(\tau),B_{y}(\tau))\text{ at } \overline{\eta}=0,\\
     \varphi^0 &\rightarrow 0, \quad \text{as } \overline{\eta} \rightarrow \infty.
\end{split}
\end{equation}
The explicit solution of \eqref{e:ellipse_y_approximation} is found as
\be
\bar \varphi^0 =-u^{0}( B_{x}(\tau), B_{y}(\tau))\exp\left({\frac{A\sin\tau}{\varepsilon   }\eta}\right)\chi_{[\pi,2\pi]}\delta(\eta).
\ee
where $\chi$ stands for the characteristic function.
To match the boundary condition in our numerical scheme, we introduce a cut-off function to derive an approximate form of
\be
\bar \varphi^0 =-u^{0}( B_{x}(\tau), B_{y}(\tau))\exp\left(\frac{\sin \tau}{\ep} \eta \right) \chi_{[\pi, 2\pi]}(\tau) \delta(\eta),
\ee
where $\delta(\eta)$ is a smooth cut-off function such that $\delta(\eta) = 1$ for $\eta \in [0, R/2]$ and $=0$ for $\eta \in [3R/4, 1]$.

We now introduce our {\it SL-PINN} of the form,
\begin{equation}\label{e:EPINN_scheme_ellipse02}
    \widetilde{v}(\eta,\tau ; \, {\blds \theta}) 
            =
            \left(\hat{v}(\eta,\tau,{\blds \theta})-\hat{v}(0,\tau,{\blds \theta})
            {\hat {\varphi}^0}\right)C(\eta,\tau),
\end{equation}
where $\hat{ \varphi}^0$ is given by 
\begin{equation}
     \hat{ \varphi}^0=\exp\left(\frac{A\sin \tau}{\ep} \eta \right) \chi_{[\pi, 2\pi]}(\tau) \delta(\eta),
\end{equation}
and $C(\eta,\tau)$ is the boundary regularizing term given by
\begin{equation} \label{e:ellipse_reg02}
\begin{split}
    C(\eta,\tau)=\begin{cases}
    1-(\frac{R-\eta}{R})^{3} , \text{ if } 0 \leq \tau \leq \pi,\\
    1-(\frac{R-\eta}{R})^{3}-(\frac{R-\eta}{R}\sin\tau)^3, \text{ if } \pi < \tau <2\pi.
    \end{cases}
\end{split}
\end{equation}
Then, the residual loss is defined by
\begin{equation}\label{e:ellipse_loss_second}
\begin{split}
    Loss=\left(\frac{1}{N}\sum_{i=0}^{N}|P_{\epsilon}\widetilde{v}((\eta_{i},\tau_{i},{\blds \theta}))-f |^{p}\right)^{1/p} \quad  \text{ for }(\eta_{i},\tau_{i}) \in D,
\end{split}    
\end{equation}
where $p=1,2$. 
Considering the boundary layer behavior near $\pi \leq \tau \leq 2 \pi$, we split the residual loss  \eqref{e:ellipse_loss_second} into two sections: $0 < \tau < \pi$ and $\pi \leq \tau \leq 2 \pi$. 
The residual loss calculation for $0 < \tau < \pi$ is relatively simple:

\begin{equation}\label{y-upper-elipse}
\begin{split}
    &P_{\epsilon}\widetilde{v}((R-\eta,\tau,{\blds \theta})) - f = \\
    &-\ep \left( \hat{v}_{\eta \eta}(\eta,\tau,{\blds \theta}) \left(1 - \left(\frac{R-\eta}{R}\right)^3\right) - 2\hat{v}_{\eta}(\eta,\tau,{\blds \theta}) \left(-3 \frac{(R-\eta)^2}{R^3}\right) + \hat{v}(\eta,\tau,{\blds \theta}) \left(-6 \frac{R-\eta}{R^3}\right) \right. \\
    & \quad + \left. \hat{v}_{\tau \tau}(\eta,\tau,{\blds \theta}) \left(1 - \left(\frac{R-\eta}{R}\right)^3\right) \right) \\
    &+(a \sinh (R-\eta) \sin \tau) \left( \hat{v}_{\eta}(\eta,\tau,{\blds \theta}) \left(1 - \left(\frac{R-\eta}{R}\right)^3\right) + \hat{v}(\eta,\tau,{\blds \theta}) \left(3 \frac{(R-\eta)^2}{R^3}\right) \right) \\
    &-(a \cosh (R-\eta) \cos \tau) \hat{v}_{\tau}(\eta,\tau,{\blds \theta}) \left(1 - \left(\frac{R-\eta}{R}\right)^3\right) - f,
\end{split}
\end{equation}
for $0 \leq \eta < R \;, 0 \leq \tau \leq \pi$.

However, when considering $\pi < \tau < 2 \pi$, incorporating the boundary layer element in \eqref{e:EPINN_scheme_ellipse02} and including the regularizing term in \eqref{e:ellipse_reg02} result in a more intricate form of the residual loss, as shown below:

\begin{equation}\label{y-lower-elipse}
\begin{split}
    &P_{\epsilon}\widetilde{v}((R-\eta,\tau,{\blds \theta})) - f = \\
    &-\ep \left( \hat{v}_{\eta \eta}(\eta,\tau,{\blds \theta})C + 2\hat{v}_{\eta}(\eta,\tau,{\blds \theta})C_{\eta} + \hat{v}(\eta,\tau,{\blds \theta})C_{\eta \eta} + \hat{v}_{\tau \tau}(\eta,\tau,{\blds \theta})C + 2\hat{v}_{\tau}(\eta,\tau,{\blds \theta})C_{\tau} + \hat{v}(\eta,\tau,{\blds \theta})C_{\tau \tau} \right) \\
    &+(a \sinh (R-\eta) \sin \tau) \left( \hat{v}_{\eta}(\eta,\tau,{\blds \theta})C + \hat{v}(\eta,\tau,{\blds \theta})C_{\eta} \right) \\
    &-(a \cosh (R-\eta) \cos \tau) \left( \hat{v}_{\tau}(\eta,\tau,{\blds \theta})C + \hat{v}(\eta,\tau,{\blds \theta})C_{\tau} \right) \\
    &+\ep \hat{v}(0,\tau,{\blds \theta}) \left[ \delta_{\eta \eta}C + 2 \delta_{\eta}C_{\eta} + \delta C_{\eta \eta} \right] \exp\left( \frac{A\sin \tau}{\ep} \eta \right) \\
    &+\hat{v}(0,\tau,{\blds \theta}) \left[ 2A \sin \tau \delta_{\eta}C + 2A \sin \tau \delta C_{\eta} \right] \exp\left( \frac{A\sin \tau}{\ep} \eta \right) \\
    &-a (\cosh (R-\eta) \sin \tau) \hat{v}(0,\tau,{\blds \theta}) \left[ \delta_{\eta} C + \delta C_{\eta} \right] \exp\left( \frac{A \sin \tau}{\ep} \eta \right) \\
    &+\ep \delta \left[ \hat{v}_{\tau \tau}(\eta,\tau,{\blds \theta})C + 2\hat{v}_{\tau}(\eta,\tau,{\blds \theta})C_{\tau} + \hat{v}(\eta,\tau,{\blds \theta})C_{\tau \tau} \right] \exp\left( \frac{A \sin \tau}{\ep} \eta \right) \\
    &-\delta \left[ A \eta \left( (\sin \tau \hat{v}(\eta,\tau,{\blds \theta}) - 2\cos \tau \hat{v}_{\tau}(\eta,\tau,{\blds \theta}))C - 2(\cos \tau \hat{v}(\eta,\tau,{\blds \theta}))C_{\tau} \right) \right] \exp\left( \frac{A \sin \tau}{\ep} \eta \right) \\
    &+\frac{\hat{v}(0,\tau,{\blds \theta})}{\ep} \delta C \left[ (A \sin \tau)^{2} - a \sinh (R-\eta) \sin^2 \tau A + (A\eta \cos \tau)^{2} + a \cosh (R-\eta) \cos^2 \tau A\eta \right]\\
    &\times \exp\left( \frac{A \sin \tau}{\ep} \eta \right) - f,
\end{split}
\end{equation}
for $0 \leq \eta < R \;, \pi \leq \tau \leq
2\pi$.

Similar to the circular domain, the residual loss indicated in \eqref{x_lower_elipse_cal} encompasses a substantial term that may potentially misdirect our loss optimization process.
In order to make the computation in \eqref{x_lower_elipse_cal} feasible, we extract the dominant term in $\epsilon$, specifically the term of $\mathcal{O}(\epsilon^{-1})$.
Hence, we define
\begin{equation}
\begin{split} 
    \psi(\eta,\tau,{\blds \theta}):=\frac{\hat{v}(0,\tau,{\blds \theta})}{\ep}\delta(\eta) (1-(\frac{R-\eta}{R})^{3}-(\frac{R-\eta}{R}\sin\tau)^3)[(A\sin\tau)^{2}-a\sinh (R-\eta)\sin^2\tau A\\
    +(A\eta\cos\tau)^{2}+a\cosh (R-\eta) \cos^2\tau A\eta]
    \exp\left( \frac{A\sin \tau}{\ep} \eta \right)\chi_{[\pi, 2\pi]}(\tau).
\end{split}
\end{equation}
By the triangular inequality, the loss in \eqref{e:ellipse_loss_second} with $p=1$ bounds
\begin{equation}\label{e:bound_ellipse000}
 Loss  \leq \frac{1}{N}\sum_{i=0}^{N}|\psi (\eta_{i},\tau_{i},{\blds \theta})|
 +  \frac{1}{N}\sum_{i=0}^{N} |P_{\epsilon}\widetilde{v}((\eta_{i},\tau_{i},{\blds \theta}))-\psi (\eta_{i},\tau_{i},{\blds \theta})-f|.
\end{equation}
Since $\psi$ absorbs large terms such as $\epsilon^{-1}$, the rightmost term does not involve high-order terms such as ${\epsilon^{\alpha}}$ where $\alpha < 0$. 
Assume that \(\boldsymbol{\theta}\) is fixed and that we select a sufficiently large number of sampling points \(N\) such as
\begin{equation}\label{e:normal_bound_ellipse_y}
 \frac{1}{N}\sum_{i=0}^{N} \left|P_{\epsilon}\widetilde{v}(\eta_{i}, \tau_{i}, \boldsymbol{\theta}) - \psi(\eta_{i}, \tau_{i}, \boldsymbol{\theta}) - f\right| \approx \frac{1}{\pi}\int_{0}^{2\pi} \int_{0}^{1} \left|P_{\epsilon}\widetilde{v}(\eta, \tau, \boldsymbol{\theta}) - \psi(\eta, \tau, \boldsymbol{\theta}) - f\right| \, d\eta \, d\tau < \infty.
\end{equation}

Considering that \(\psi\) contains terms of \(\mathcal{O}(\epsilon^{-1})\), we utilize an \(L^1\) loss for handling \(\psi\).
For precise computations, we choose a sufficiently large number of sampling points, \(N\), such as
\begin{equation}\label{e:bound_ellipse_y_01}
\begin{split}
     & \frac{1}{N}\sum_{i=0}^{N}|\psi(\eta_{i},\tau_{i},{\blds \theta})| \approx \frac{1}{R\pi}\int^{2\pi}_{\pi} \int^{R}_{0} |\psi(\eta,\tau,{\blds \theta})|d \eta d \tau.
\end{split}
\end{equation}

The right-hand side of equation (\ref{e:bound_ellipse01}) is bounded by \(C\), as shown in Section \ref{subsec:plain}. Specifically, we obtain that
\begin{equation}\label{e:bound_ellipse020}
\begin{split}
     \frac{1}{R\pi}\int^{2\pi}_{\pi} \int^{R}_{0} |\psi(\eta,\tau,{\blds \theta})|d \eta d \tau
    \leq C.
\end{split}
\end{equation}

\begin{figure}[H]
     \centering
     \begin{subfigure}[b]{0.4\textwidth}
         \centering
         \includegraphics[width=\textwidth]{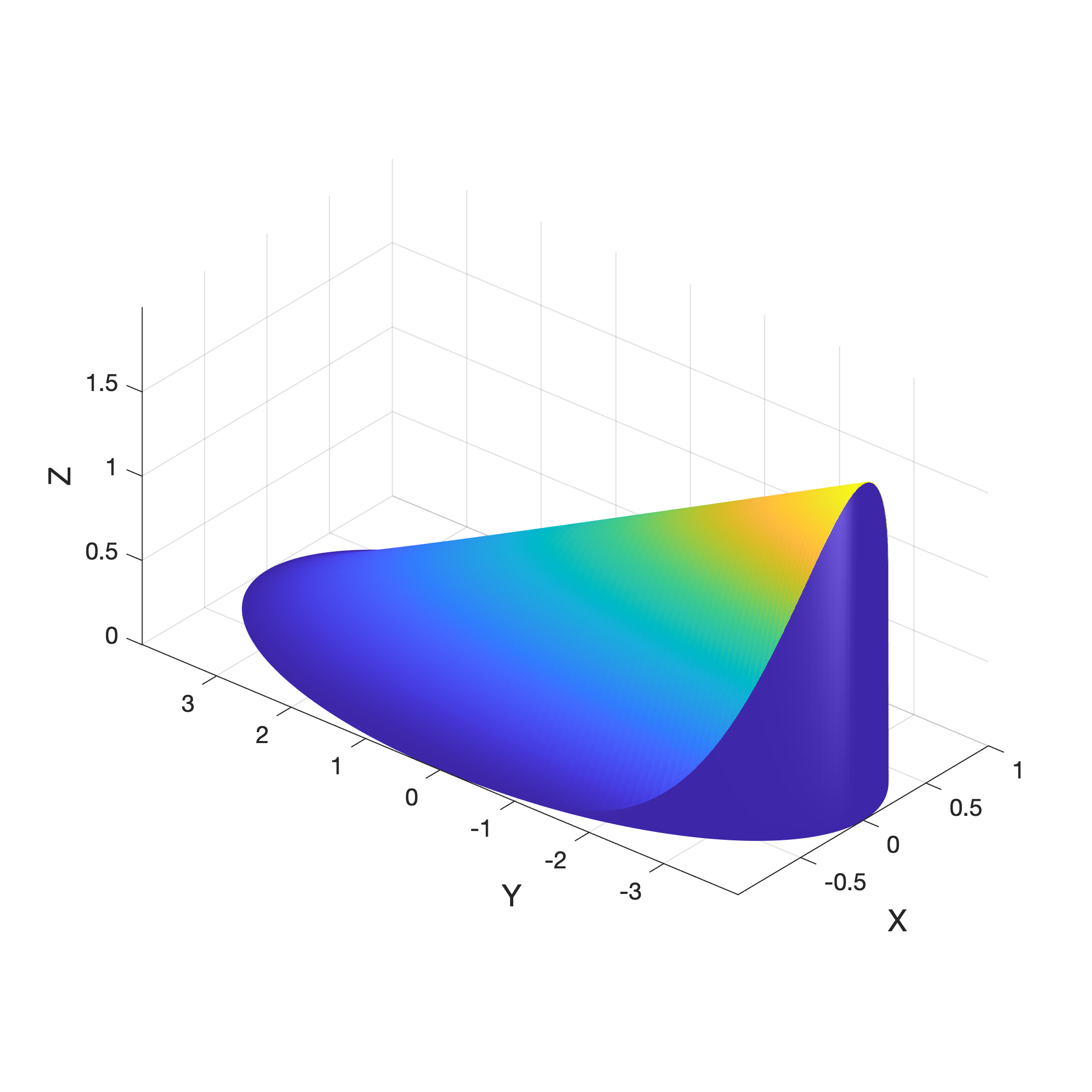}
         \caption{True solution}
         %\label{fig:y equals x}
     \end{subfigure}
     %\hfill
     \begin{subfigure}[b]{0.4\textwidth}
         \centering
         \includegraphics[width=\textwidth]{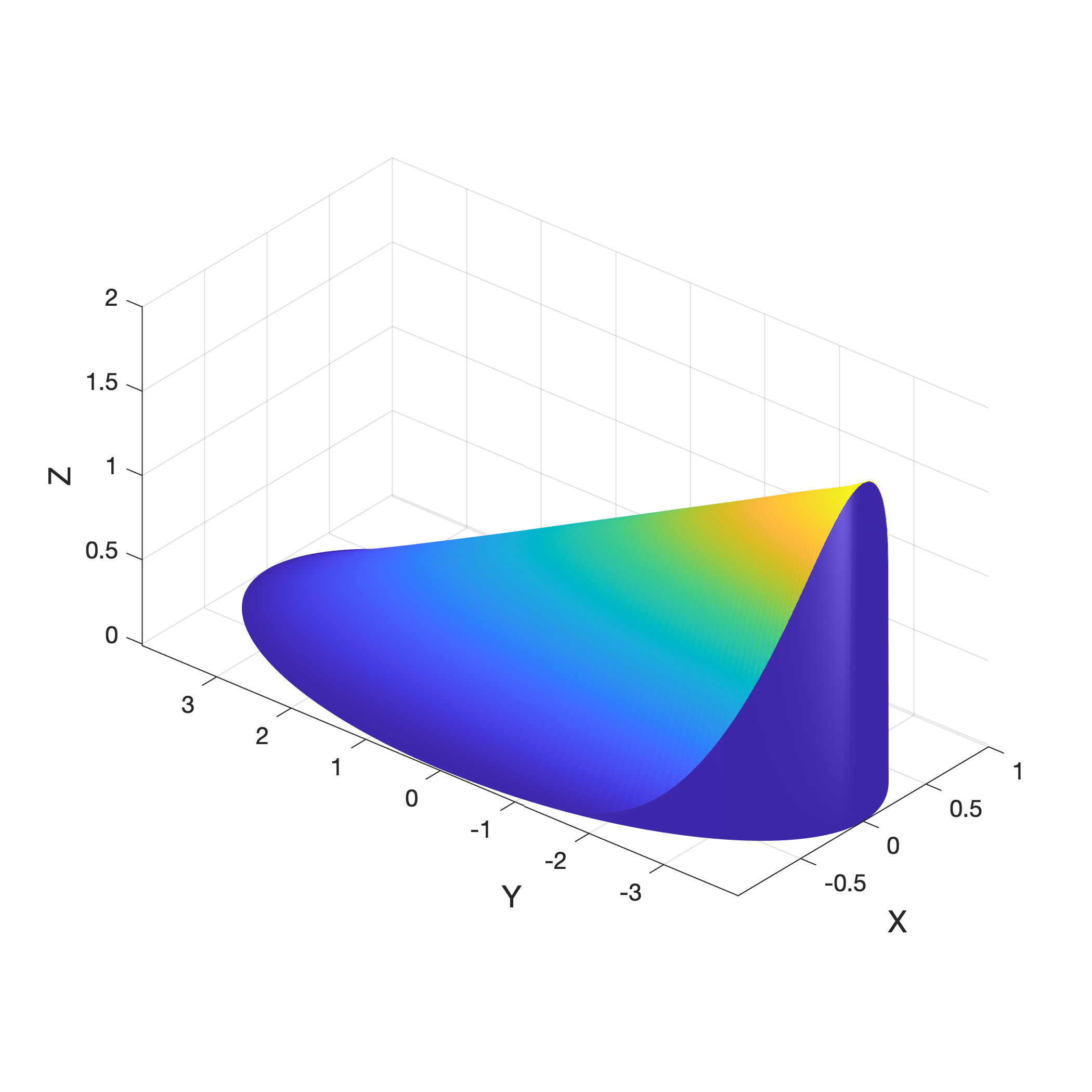}
         \caption{SL-PINN using $L^2$ training}
         %\label{fig:y equals x}
     \end{subfigure}
     %\hfill
     \begin{subfigure}[b]{0.4\textwidth}
         \centering
         \includegraphics[width=\textwidth]{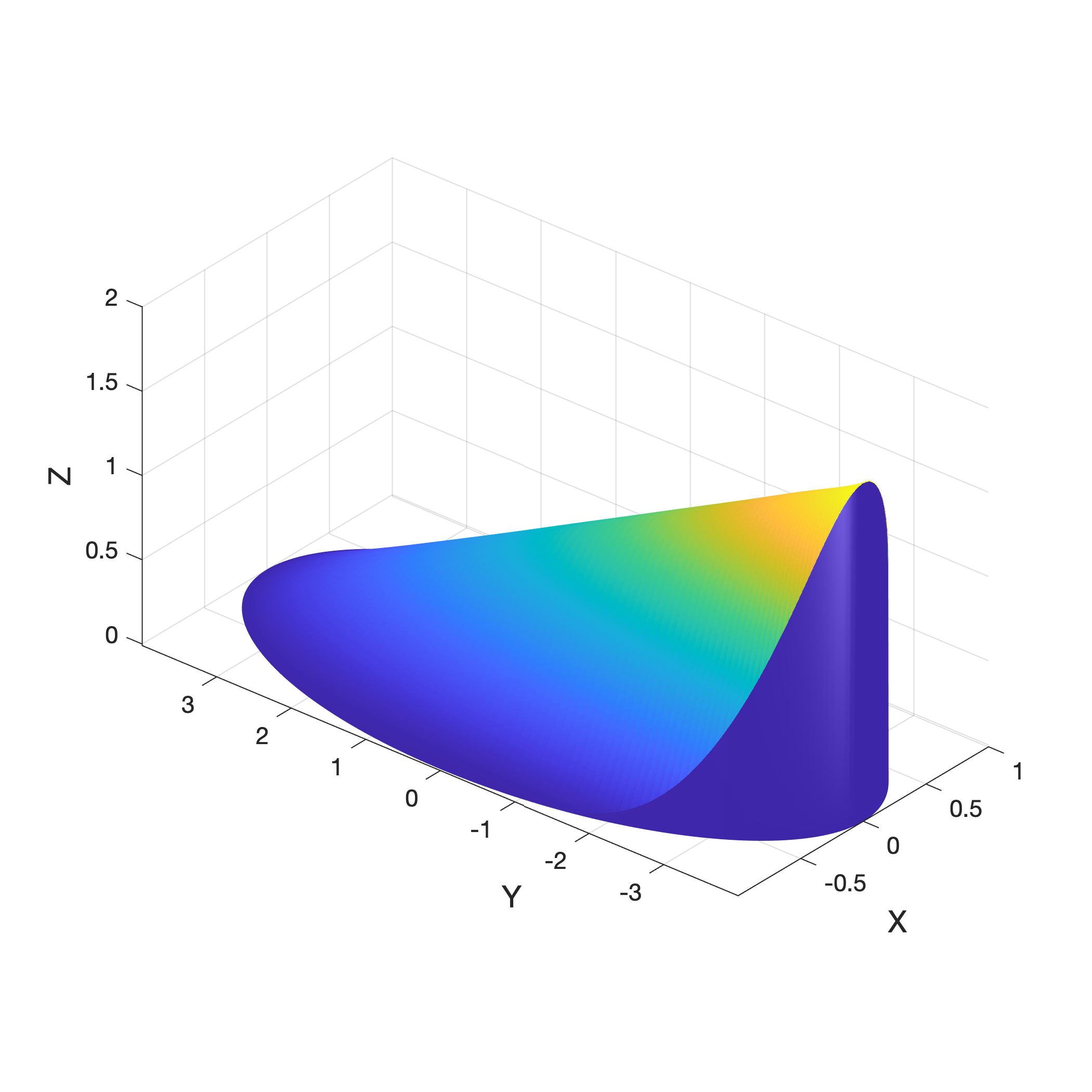}
         \caption{SL-PINN using $L^1$ training}
         %\label{fig:y equals x}
     \end{subfigure}
     %\hfill
     \begin{subfigure}[b]{0.4\textwidth}
         \centering
         \includegraphics[width=\textwidth]{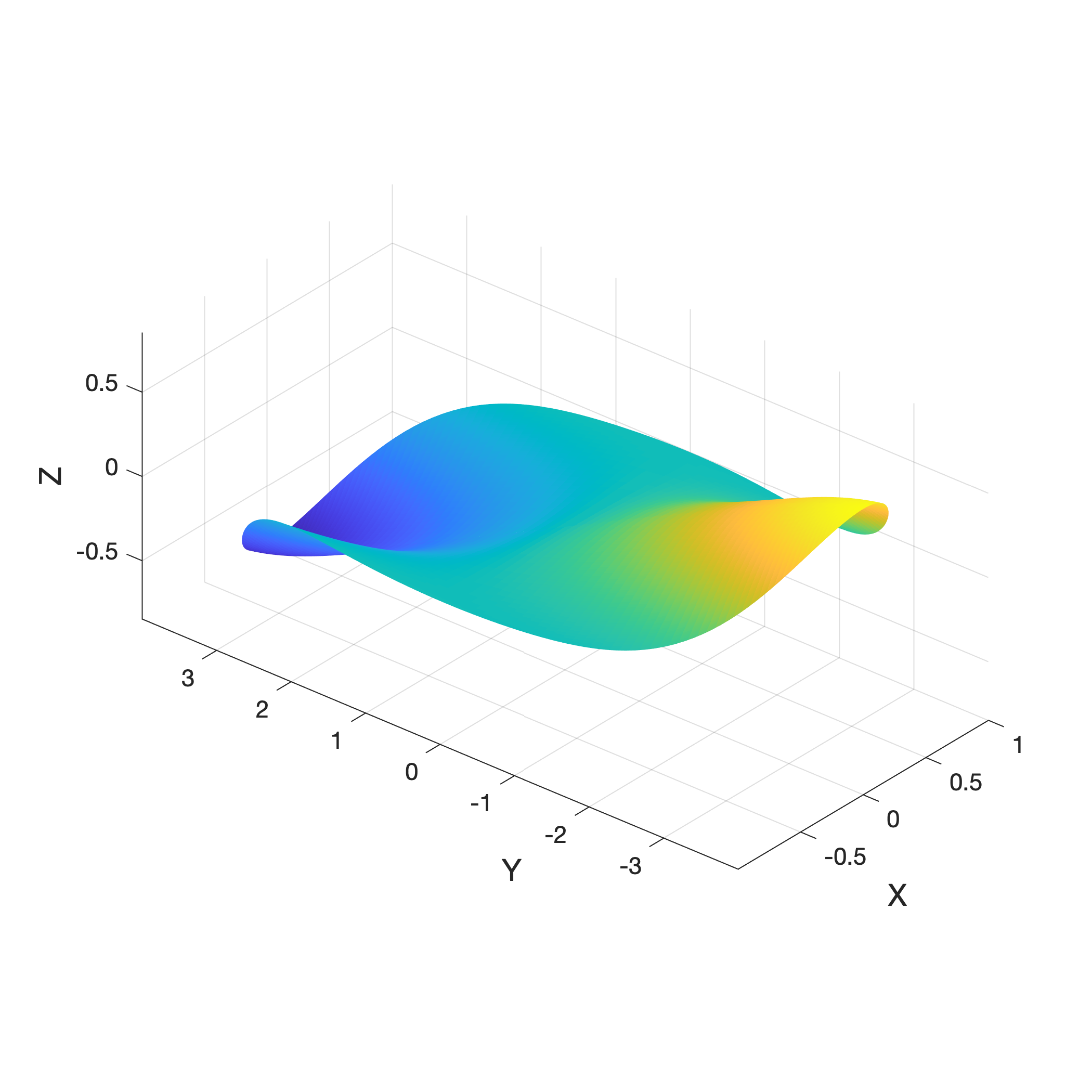}
         \caption{Conventional PINN}
         %\label{fig:three sin x}
     \end{subfigure}
     %\hfill
     \caption{Numerical prediction of \eqref{e:ellipse} with $(A,B)=(1,4)$ and $\ep = 10^{-6}$ when $f={(1-({x}/{A})^2)^2}/{B}$. For our simulations, we select a uniform grid of $50$ discretized points in each of the $r$ and $\tau$ directions.}\label{fig7-0}
\end{figure}

\begin{figure}%[htbp]
    \centering
    \begin{subfigure}[b]{0.45\textwidth}
        \centering
        \includegraphics[width=\textwidth]{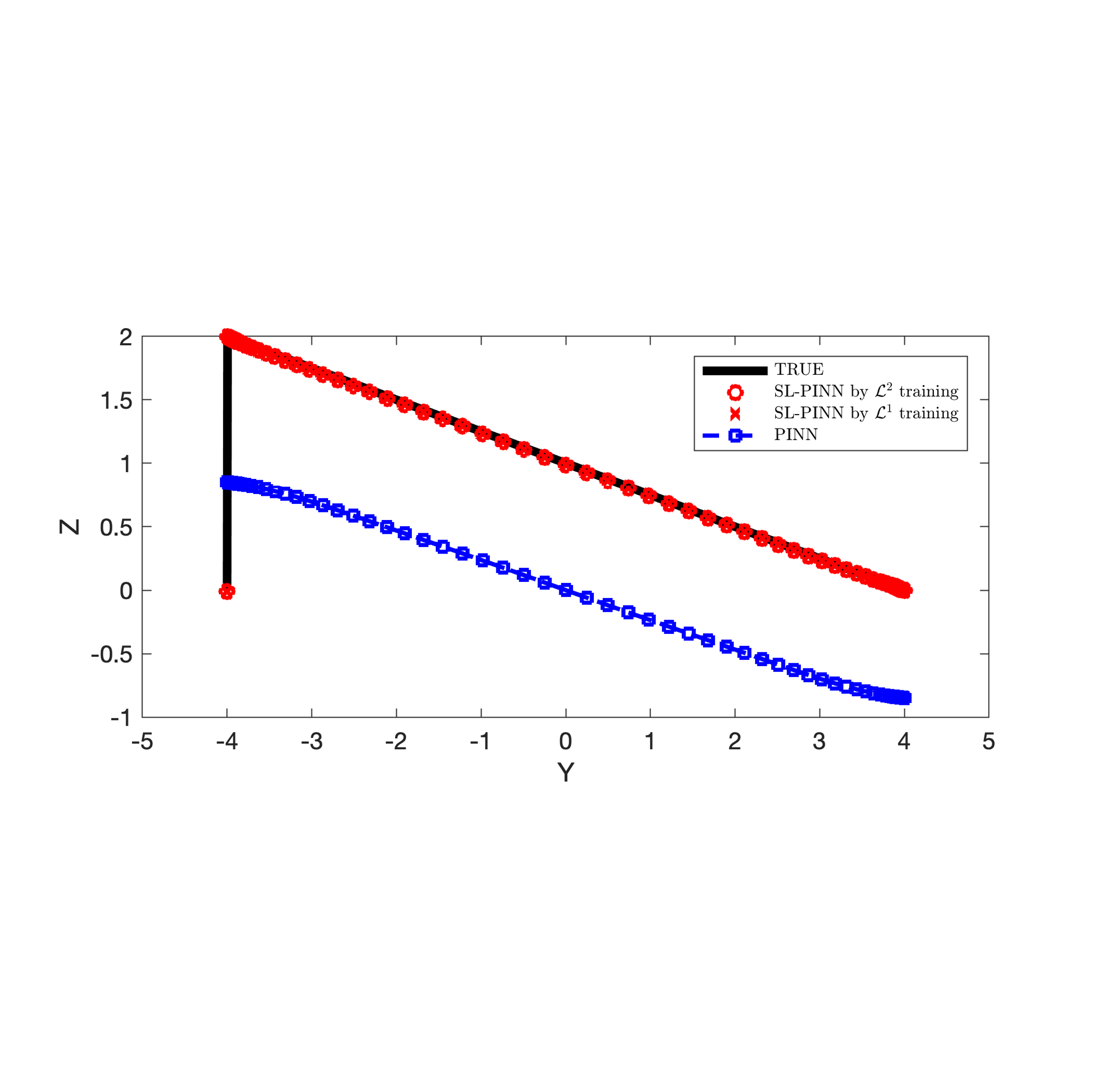}
        \caption{True soltuion at t=1}
         %\label{fig:y equals x}
    \end{subfigure}
     %\hfill
    \begin{subfigure}[b]{0.45\textwidth}
        \centering
        \includegraphics[width=\textwidth]{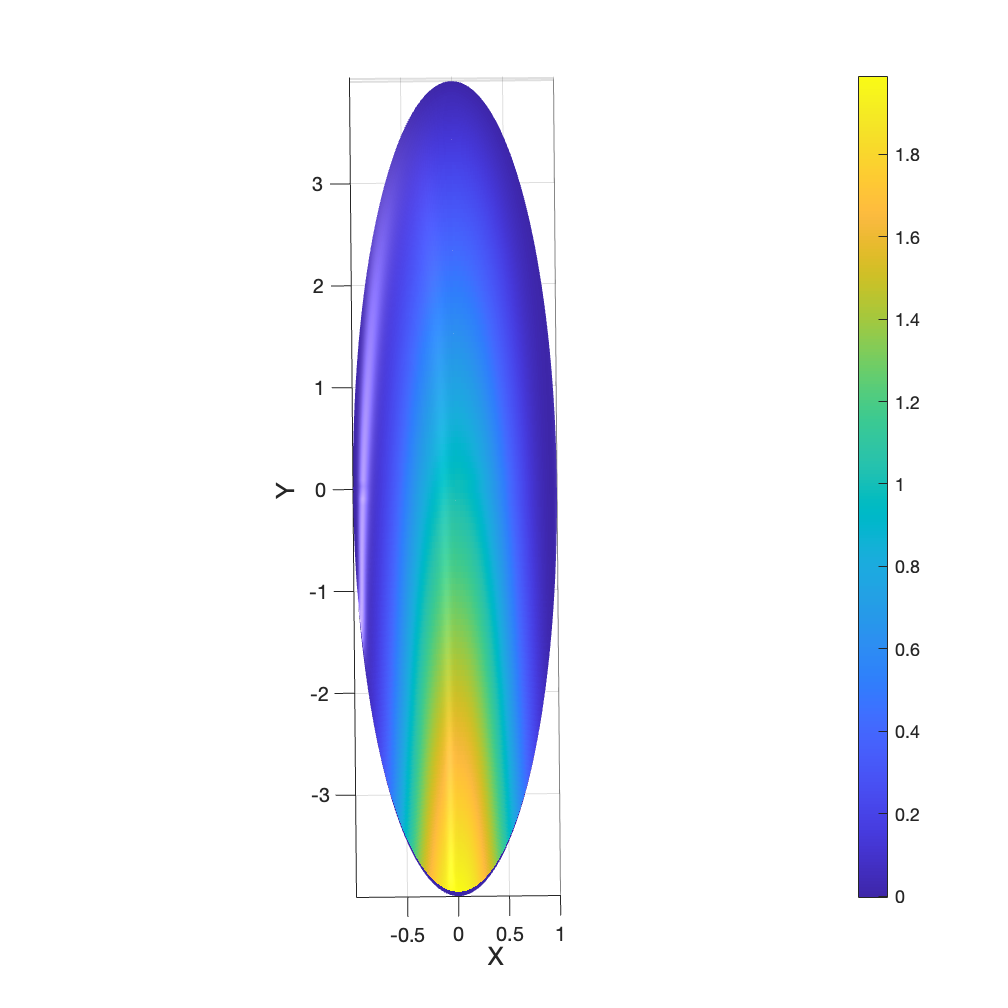}
        \caption{SL-PINN}
         %\label{fig:three sin x}
    \end{subfigure}
    \caption{The one-dimensional profile of predicted solutions in Figure \ref{fig7-0} along the line $\tau = \pi/2$ and solution surface projection profile.}\label{fig7}
\end{figure}

\begin{comment}

\begin{figure}%[htbp]
         \centering
         \includegraphics[trim={1cm 4cm 1cm 4cm},clip,scale=0.5]{fig/true_vs_pred_ellipse_x_at_0.5pi.png}
        \caption{The one-dimensional profile of predicted solutions in Figure \ref{fig7-0} along the line $\tau = \pi/2$.}\label{fig7}
\end{figure}
\end{comment}

\begin{table}[htbp]

\begin{tabular}{ |p{1.8cm}|p{1.8cm}|p{1.8cm}|p{1.8cm}|p{1.8cm}|p{1.8cm}|p{1.8cm}|p{1.8cm}| }
 \hline
 \multicolumn{8}{|c|}{Relative $L^2$ error by the $L^2$ training (PINN)} \\
 \hline
    & square (Fig. \ref{fig1})& circle with non compatible (Fig. \ref{fig3}) & circle with compatible (Fig. \ref{fig2}) & ellipse 4:1 (Fig. \ref{fig6})& ellipse 1:4 (Fig. \ref{fig7-0})& oscillation (Fig. \ref{fig4}) & non-linear case (Fig. \ref{fig non-li}) \\
 \hline
 $\epsilon=10^{-4}$   & $9.55\times10^{-1}$    &$9.20\times10^{-1}$ &  $9.27\times10^{-1}$&  $8.55\times10^{-1}$&  $7.84\times10^{-1}$&  $8.76\times10^{-1}$&  $2.90\times10^{-1}$ \\
 $\epsilon=10^{-6}$&   $9.55\times10^{-1}$  & $9.21\times10^{-1}$   &$9.27\times10^{-1}$ &$8.65\times10^{-1}$&  $7.03\times10^{-1}$&  $9.12\times10^{-1}$&  $6.11\times10^{-1}$ \\
  $\epsilon=10^{-8}$ & $9.55\times10^{-1}$&$9.15\times10^{-1}$ &$9.20\times10^{-1}$ &  $8.62\times10^{-1}$ &  $7.85\times10^{-1}$ &  $9.23\times10^{-1}$ &  $4.13\times10^{-1}$ \\
 \hline
\end{tabular}

\begin{tabular}{ |p{1.8cm}|p{1.8cm}|p{1.8cm}|p{1.8cm}|p{1.8cm}|p{1.8cm}|p{1.8cm}|p{1.8cm}| }
 \hline
 \multicolumn{8}{|c|}{Relative $L^2$ error by $L^2$ training (SL-PINN)} \\
 \hline
 & square (Fig. \ref{fig1})& circle with non compatible (Fig. \ref{fig3}) & circle with compatible (Fig. \ref{fig2}) & ellipse 4:1 (Fig. \ref{fig6})& ellipse 1:4 (Fig. \ref{fig7-0})& oscillation (Fig. \ref{fig4})& non-linear case (Fig. \ref{fig non-li}) \\
 \hline
 $\epsilon=10^{-4}$   & $2.5\times10^{-3}$    &$3.2\times10^{-3}$ &  $1.6\times10^{-3}$&  $5.1\times10^{-3}$&  $2.6\times10^{-3}$&  $6.5\times10^{-3}$&  $4.2\times10^{-3}$ \\
 $\epsilon=10^{-6}$&   $2.0\times10^{-3}$  & $2.8\times10^{-3}$   &$1.5\times10^{-3}$ &$3.9\times10^{-3}$&  $1.7\times10^{-3}$&  $7.6\times10^{-3}$&  $3.5\times10^{-3}$ \\
  $\epsilon=10^{-8}$ & $2.0\times10^{-3}$   &$4.7\times10^{-3}$ &$2.3\times10^{-3}$ &  $5.8\times10^{-3}$ &  $2.8\times10^{-3}$ &  $9.5\times10^{-3}$&  $5.0\times10^{-3}$ \\
 \hline
\end{tabular}

\begin{tabular}{ |p{1.8cm}|p{1.8cm}|p{1.8cm}|p{1.8cm}|p{1.8cm}|p{1.8cm}|p{1.8cm}|p{1.8cm}|  }
 \hline
 \multicolumn{8}{|c|}{Relative $L^2$ error by $L^1$ training (SL-PINN)} \\
 \hline
 & square (Fig. \ref{fig1})& circle with non compatible (Fig. \ref{fig3}) & circle with compatible (Fig. \ref{fig2}) & ellipse 4:1 (Fig. \ref{fig6})& ellipse 1:4 (Fig. \ref{fig7-0})& oscillation (Fig. \ref{fig4})& non-linear case (Fig. \ref{fig non-li}) \\
 \hline
 $\epsilon=10^{-4}$   & $9.5\times10^{-5}$    &$4.2\times10^{-3}$ &  $2.6\times10^{-3}$&  $2.3\times10^{-3}$&  $3.4\times10^{-3}$&  $4.0\times10^{-2}$&  $3.5\times10^{-3}$ \\
 $\epsilon=10^{-6}$&   $1.4\times10^{-4}$  & $3.6\times10^{-3}$   &$4.6\times10^{-3}$ &$5.6\times10^{-3}$&  $4.2\times10^{-3}$&  $5.2\times10^{-2}$&  $4.0\times10^{-3}$ \\
  $\epsilon=10^{-8}$ & $3.9\times10^{-4}$   &$4.3\times10^{-3}$ &$1.8\times10^{-3}$ &  $3.4\times10^{-3}$ &  $4.8\times10^{-3}$ &  $6.2\times10^{-2}$ &  $5.0\times10^{-3}$ \\
 \hline
\end{tabular}

\caption{Comparison of relative $L^2$ errors for equations \eqref{e:square}, \eqref{e:circle_eq}, and \eqref{e:ellipse} utilizing standard 5-layer PINN, SL-PINN employing $L^2$ training, and SL-PINN employing $L^1$ training.} \label{tab:1}
\end{table}
Therefore, the loss \eqref{e:ellipse_loss_second} becomes nearly constant due to \eqref{e:normal_bound_ellipse_y} and \eqref{e:bound_ellipse020} when \(\epsilon\) is sufficiently small. This simplification facilitates feasible computations in the SL-PINN approach. The effectiveness of our new approach is demonstrated through a series of numerical experiments presented in Figures \ref{fig7-0} and \ref{fig7}.
We employed a grid comprising 50 uniformly spaced points for these experiments in both the $\tau$ and $\eta$ directions.
We compare the standard five-layer PINN with our SL-PINN. 
Figure \ref{fig7-0} shows the numerical solutions of \eqref{e:ellipse} with $(A,B)=(1,4)$ and $\ep = 10^{-6}$ when $f={(1-({x}/{A})^2)^2}/{B}$. 
The figure illustrates the inaccurate predicted solution computed by the conventional PINN for the singular perturbation problem. In contrast, our novel approach demonstrates superior performance; see e.g., Table \ref{tab:1} for more details.
Figure \ref{fig7} offers a detailed view of the one-dimensional profile of predicted solutions at $\tau=\pi/2$. This demonstrates the high accuracy of both the $L^1$ and $L^2$ training approaches in predicting solutions. However, the traditional PINN method falls short of capturing the sharp transition near the boundary layer.

\section{Conclusion}

In this study, we have introduced a semi-analytic approach to enhance the numerical performance of PINNs in addressing a range of singularly perturbed boundary value problems and convection-dominated equations on rectangular, circular, and elliptical domains. For each singular perturbation problem examined, we derived an analytic approximation known as the corrector function, which captures the behavior of the fast (stiff) component of the solution within the boundary layer. Additionally, we conducted a detailed boundary layer analysis to better understand the dynamics within these regions. 
By integrating these corrector functions into a 2-layer PINN framework with hard constraints, we successfully addressed the stiffness inherent in the approximate solutions, resulting in our novel semi-analytic SL-PINNs enriched with the correctors. Through the incorporation of these corrector functions, we effectively overcame the stiffness challenge associated with approximate solutions, leading to the development of our improved SL-PINNs.

For future work, we aim to extend our approach to more complex geometries, which present significant challenges. Addressing these complexities may require a greater reliance on machine learning predictions to accurately capture the intricate boundary layer dynamics. Additionally, we plan to apply our semi-analytic SL-PINN methodology to Burgers' equations in one and two dimensions. This extension will allow us to further test and refine our approach, demonstrating its versatility and robustness in handling a broader range of nonlinear convection-diffusion problems.

%\section{Conclusion}

\section{Appendix}
We consider the convection-diffusion equations in an elliptical domain, 
\begin{equation}\label{original pde}
\left\{
\begin{aligned}
L_\varepsilon u^\varepsilon &:= -\varepsilon \Delta u^\varepsilon - u^\varepsilon_y = f(x, y) \text{ in } \Omega, \\
u^\varepsilon &= 0 \text{ on } \partial \Omega,
\end{aligned}
\right.
\end{equation}
where \(0 < \varepsilon \ll 1\). 
The domain \(\Omega\) is an ellipse defined as \( \Omega = \{(x, y) \mid \frac{x^2}{A^2} + \frac{y^2}{B^2} < 1\} \), where \(A = a \cosh R\) and \(B = a \sinh R\). The function \(f\) is assumed to be sufficiently smooth. For the domain \(\Omega\), we particularly focus on the ellipse with its major axis parallel to the \(x\)-axis. The limit problem (i.e., when \(\varepsilon = 0\)) is defined by
\begin{equation}\label{limit equation}
\left\{
\begin{aligned}
-u^0_y &= f(x, y) \text{ in } \Omega, \\
u^0 &= 0 \text{ on } \Gamma_u,
\end{aligned}
\right.
\end{equation}
where \(\Gamma_u = \{(x, y) \mid \frac{x^2}{A^{2}} + \frac{y^{2}}{B^{2}} = 1, y > 0\}\). The explicit solution of equation (\ref{limit equation}) is
\begin{equation}\label{limit solution}
u^0(x, y) = \int^{C_u(x)}_{y} f(x, s) ds, \quad (x, y) \in \Omega,
\end{equation}
which \(C_u(x) = B\sqrt{1 - \frac{x^{2}}{A^{2}}}\) is upper halves of the ellipse. We define boundary-fitted coordinates for an elliptic coordinate system such that 
\begin{equation}
x = a \cosh (R-\eta) \cos \tau, \quad y = a \sinh (R-\eta) \sin \tau,
\end{equation}
where the elliptic domain \( \Omega \) changed to \( D = \{ (\eta, \tau) \in (0, R) \times (0, 2\pi) \}\), and \( R \) is the maximal radial extent.
\begin{comment}
    where \( r \) be the radial distance from the center to a point within the domain \( D \), and let \( \xi \) be the angular coordinate, representing the angle relative to the positive \( x \)-axis in our elliptic coordinate system. 

We map the elliptic domain \( D^* = \{ (r, \xi) \in (0, R) \times (0, 2\pi) \}\), where \( R \) is the maximal radial extent.
\end{comment}
We also define the domain \(D_{3R/4}\) as \(D_{3R/4} = \{ (\eta, \tau) \in D : r \leq \frac{3R}{4} \}\).
In these coordinates, the transformed differential operator for equation (\ref{original pde}) is given by
\begin{equation}\label{Transforemd pde}
\begin{aligned}
L_\epsilon u^\epsilon &= -\epsilon \Delta u^\epsilon - u^\epsilon_y \\
&= \frac{1}{H} \left(-\epsilon (v^{\epsilon}_{\eta \eta} + v^{\epsilon}_{\tau \tau}) + (a \cosh (R-\eta) \sin \tau) v^{\epsilon}_{\eta} - (a \sinh (R-\eta) \cos \tau) v^{\epsilon}_{\tau} \right),
\end{aligned}
\end{equation}
where \(H = (a\sinh (R-\eta) \cos \tau)^{2} + (a \cosh (R-\eta) \sin \tau)^{2}\).
Considering the stretched variable \(\bar{\eta} = \frac{\eta}{\epsilon}\), we transform \eqref{Transforemd pde} to
\begin{equation}\label{enriched transforemd pde}
\begin{aligned}
L_\epsilon u^\epsilon &= -\epsilon \Delta u^\epsilon - u^\epsilon_y \\
&= \frac{1}{H} \left(-\frac{1}{\epsilon} v^{\epsilon}_{\bar{\eta}\bar{\eta}} -\epsilon v^{\epsilon}_{\tau \tau} + \frac{1}{\epsilon}(a \cosh (R- \bar{\eta}) \sin \tau) v^{\epsilon}_{\bar{\eta}} - (a \sinh (R-\bar{\eta}) \cos \tau) v^{\epsilon}_{\tau} \right).
\end{aligned}
\end{equation}
To eliminate the leading terms of \eqref{enriched transforemd pde}, we derive the following equation for the first corrector \(\theta^0\):
\begin{equation}\label{corrector equation}
\left\{
\begin{aligned}
-\frac{\partial^2 \varphi^{0}}{\partial \bar{\eta}^2} + a\cosh( R-\epsilon \bar{\eta}) \sin \tau \frac{\partial \varphi^{0}}{\partial \bar{\eta}} &= 0, & \text{for } 0 < \bar{\eta} < \infty, \pi < \tau < 2\pi, \\
\varphi^{0} &= -u^0 (a \cosh R \cos \tau, a\sinh R \sin \tau) & \text{at } \bar{\eta} = 0, \\
\varphi^{0} &\rightarrow 0 & \text{as } \bar{\eta} \rightarrow \infty.
\end{aligned}
\right.
\end{equation}
Instead of directly finding the explicit solution in \eqref{corrector equation}, we seek an approximation of the solution in \eqref{Approximated corrector equation} using the following form
\begin{equation}\label{Approximated corrector equation}
\left\{
\begin{aligned}
-\frac{\partial^2 \varphi^0}{\partial \bar{\eta}^2} + a\cosh R \sin \tau \frac{\partial \varphi^0}{\partial \bar{\eta}} &= 0, & \text{for } 0 < \bar{\eta} < \infty, \pi < \tau < 2\pi, \\
\varphi^0 &= -u^0 (a \cosh R \cos \tau, a\sinh R \sin \tau) & \text{at } \bar{\eta} = 0, \\
\varphi0 &\rightarrow 0 & \text{as } \bar{\eta} \rightarrow \infty.
\end{aligned}
\right.
\end{equation}
An explicit solution is directly obtained by
\begin{equation}
\varphi^0 = -u^0 (a \cosh R \cos \tau, a \sinh R \sin \tau) \exp \left( \frac{A \sin \tau }{\epsilon}\eta \right) \chi_{[\pi, 2\pi]}(\tau),
\end{equation}
where \(\chi_A\) is the characteristic function and \(A = a \cosh R\). 
Introducing a cut-off function to satisfy the boundary condition, we write an approximate form of \(\varphi^0\)
\begin{equation}\label{delta corrector}
\bar{\varphi}^0 = -u^0 (a \cosh R \cos \tau, a \sinh R \sin \tau) \exp \left( \frac{A \sin \tau }{\epsilon}\eta \right) \delta(\eta) \chi_{[\pi, 2\pi]}(\tau),
\end{equation}
where \(\delta(\eta)\) is a smooth cut-off function such that \(\delta(\eta) = 1\) for \(\eta \in [0, R/2]\) and \(= 0\) for \(\eta \in [3R/4, R]\).
For notational convenience, the following expressions are defined:
\begin{equation}\notag
    B_{x}(\tau) = a \cosh R \cos \tau,
\end{equation}
\begin{equation}\notag
     B_{y}(\tau) = a \sinh R \sin \tau,
\end{equation}
\begin{equation}
     \bar{\varphi}^0 = -u^0 ( B_{x}(\tau), B_{y}(\tau)) \exp \left( \frac{A \sin \tau }{\epsilon} \eta \right) \delta(\eta) \chi_{[\pi, 2\pi]}(\tau).
\end{equation}
Since \(\varphi^0\) vanishes \(u^0\) like at \(\tau = \pi, 2\pi\), \(\varphi^0\) is continuous and piecewise smooth on \(\overline{\Omega}\), and thus we conclude that \(\varphi^0, \bar{\varphi}^0 \in H^1(\Omega)\). 
We then obtain that 
\begin{equation}
u^0 + \bar{\varphi}^0 \in H^1_0(\Omega).
\end{equation}
Note that if \( f = f(x, y) \) does not vanish at the characteristic points \((\pm a \cosh R, 0)\), \( f(x, y) \) is considered incompatible, resulting in singularities in the derivatives of \( u^0 \) at these points (see, e.g., \cite{JUNG201188}). Although the study of non-compatible cases can be pursued elsewhere, this paper assumes the compatibility condition to concentrate on the numerical analysis.
\begin{lemma}\label{compatibility_lemma}
We assume that
\begin{equation}
\frac{\partial^{\alpha+\beta} f(x, y)}{\partial x^\alpha \partial y^\beta} = 0 \quad \text{at } (\pm a\cosh R, 0), \quad 0 \leq 2\alpha + \beta \leq \gamma - 1, \quad \gamma \geq 1, \quad \alpha, \beta \geq 0,
\end{equation}
where \( f(x, y) \) belongs to \( C^\gamma(D) \), and \( D \) is the ellipse as in (\ref{original pde}). Then the following function
\begin{equation}
\frac{f(x, C_u(x))}{C^\gamma_u(x)}
\end{equation}
is bounded for all \( x \in (-a\cosh R, a\cosh R) \).    
\end{lemma}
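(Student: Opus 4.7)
\textbf{Proof plan for Lemma \ref{compatibility_lemma}.} The strategy is to reduce the claim to a local statement near the two characteristic points $(\pm A, 0)$, where $C_u(x) \to 0$, and then match the vanishing order of $f$ against that of $C_u$ via Taylor expansion. Away from these points, $C_u$ is bounded below by a positive constant on any compact subinterval of $(-A, A)$, so by the continuity of $f \in C^\gamma$ the ratio $f(x,C_u(x))/C_u^\gamma(x)$ is trivially bounded there. Hence only the neighborhoods of $x = \pm A$ require real argument, and by symmetry I may focus on $x$ near $A = a\cosh R$.

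First, I would introduce the local variable $s = A - x > 0$ and expand
\begin{equation}
C_u(x) \;=\; B\sqrt{1 - x^2/A^2} \;=\; \frac{B}{A}\sqrt{s(2A - s)} \;=\; c_0 \sqrt{s}\,(1 + O(s))
\end{equation}
for $0 < s \ll 1$, with $c_0 = B\sqrt{2/A}$. Thus $C_u^\gamma(x) \asymp s^{\gamma/2}$ near $x = A$. Next, I apply Taylor's theorem with integral remainder to $f \in C^\gamma(\bar\Omega)$ around the point $(A, 0)$:
\begin{equation}
f(x,y) \;=\; \sum_{2\alpha + \beta \le \gamma-1} \frac{1}{\alpha!\,\beta!}\frac{\partial^{\alpha+\beta}f}{\partial x^{\alpha}\partial y^{\beta}}(A,0)\,(x-A)^{\alpha} y^{\beta} \;+\; \sum_{\alpha+\beta=\gamma}(x-A)^{\alpha}y^{\beta}\,r_{\alpha,\beta}(x,y) \;+\; R(x,y),
\end{equation}
where $r_{\alpha,\beta}$ and the lower-order remainder $R$ collect the contributions that are not needed in full detail; the essential point is that $R$ consists of terms $(x-A)^{\alpha}y^{\beta}$ with $\alpha+\beta \le \gamma - 1$ but $2\alpha+\beta \ge \gamma$, and each such coefficient is bounded. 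By the compatibility hypothesis, every explicit derivative coefficient in the first sum vanishes. What remains, after substituting $y = C_u(x)$, are monomials $(x-A)^{\alpha} C_u(x)^{\beta}$ satisfying $2\alpha + \beta \ge \gamma$ (or the Taylor remainder with $\alpha + \beta = \gamma$).

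The key estimate is then the order-matching
\begin{equation}
\bigl|(x-A)^{\alpha}\,C_u(x)^{\beta}\bigr| \;\le\; C\, s^{\alpha}\,s^{\beta/2} \;=\; C\,s^{(2\alpha+\beta)/2},
\end{equation}
so whenever $2\alpha+\beta \ge \gamma$ this monomial divided by $C_u(x)^{\gamma} \asymp s^{\gamma/2}$ is bounded by $C s^{(2\alpha+\beta-\gamma)/2} \le C$. For the Taylor remainder terms with $\alpha+\beta = \gamma$, the worst case occurs when $\alpha = 0$, $\beta = \gamma$, giving exactly $C_u^{\gamma}/C_u^{\gamma} = 1$; every other splitting yields a strictly positive power of $s$. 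Combining these local estimates with the analogous argument at $x = -A$ and the trivial estimate on the interior, one concludes that $f(x,C_u(x))/C_u^{\gamma}(x)$ is bounded on all of $(-A,A)$.

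The only delicate step is the bookkeeping: one must confirm that every term produced by Taylor expansion with $2\alpha+\beta \le \gamma - 1$ is actually killed by the hypothesis, and that the remainder decomposition only generates monomials of total order $\ge \gamma$ in the mixed weight $2\alpha+\beta$. This is routine but must be written carefully, as it is the heart of why the weighting $2\alpha+\beta$ (rather than $\alpha+\beta$) appears in the compatibility condition —- it reflects precisely the fact that $C_u(x)$ vanishes like $\sqrt{s}$ while $(x-A)$ vanishes like $s$ near the characteristic point. I do not foresee any substantive analytic obstacle beyond this counting, since smoothness of $f$ on the closed ellipse provides the needed uniform control on all remainder coefficients.
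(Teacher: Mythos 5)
The paper itself omits the proof of this lemma, deferring to the analogous circular-domain argument in the cited reference, so there is no in-paper proof to compare against; your argument is correct and supplies exactly the computation that the omission hides. The reduction to neighborhoods of $(\pm A,0)$, the expansion $C_u(x)=c_0\sqrt{s}\,(1+O(s))$ with $s=A-x$, the order-$\gamma$ Taylor expansion of $f$ in which the hypothesis kills every monomial of weight $2\alpha+\beta\le\gamma-1$, and the order-matching bound $|(x-A)^{\alpha}C_u(x)^{\beta}|\le C\,s^{(2\alpha+\beta)/2}$ for the surviving terms (including the remainder, where $\alpha+\beta=\gamma$ forces $2\alpha+\beta\ge\gamma$) is precisely the intended ``straightforward'' argument and the bookkeeping closes with no gap. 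Two points worth making explicit in a write-up: the uniform control of the remainder coefficients requires $f\in C^{\gamma}$ up to the closed ellipse (read the lemma's $C^{\gamma}(D)$ as $C^{\gamma}(\overline{D})$), and expanding about $(A,0)$ and evaluating at $(x,C_u(x))$ is legitimate because the closed ellipse is convex, so the connecting segment stays in the domain.
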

We omit the proof as it is straightforward and follows a similar approach to that in \cite{JUNG201188}.

In this paper, we assume the following compatibility condition,
\begin{equation} \label{e:append_comp_cond}
\frac{\partial^{p_{1}+p_{2}} f}{\partial x^{p_{1}} \partial y^{p_{2}}} = 0 \quad \text{at} \quad (\pm a\cosh R, 0) \quad \text{for} \quad 0 \leq 2p_{1} + p_{2} \leq 2 , \quad p_{1}, p_{2} \geq 0,
\end{equation}
With the compatibility condition, we can bound the corrector function.
\begin{lemma}\label{b:corrector_estimate}
There exists a positive constant \( k \) such that, for integers \( l, n, s \geq 0, m = 0, 1, 2 \), and for \( 1 \leq p \leq \infty \),
\begin{equation}
\left| (\sin \tau)^{-l} \left( \frac{\eta}{\varepsilon} \right)^n  \frac{\partial^{s+m} \varphi^{0}}{\partial \eta^s \partial \tau^m} \right|_{L_p(D)} \leq \kappa \sup_{\tau} |a_{0,h}(\tau)| \varepsilon^{\frac{1}{p}-s},
\end{equation}
where \( h = -s + m + l + n + 1 \), and
\begin{equation}
a_{0,q}(\tau) = \sum_{\substack{m+r \leq q \\ m,r \geq 0}} c_{m,r} \frac{d^r v^0(\tau)}{\sin^m \tau \, d\tau^r},
\end{equation}
\( v^0(\tau) = -u^0(a\cosh R\cos \tau, a\sinh R\sin \tau) \) and the \( c_{m,r} = c_{m,r}(\tau)  \in C^{\infty}([0,2\pi])\), may be different at different occurrences,
If \(q<0 \), then \( a_{0,q}\) is simply in \(C^{\infty}([0,2\pi])\) fuction.
The notation \( a_{0,q} \) follows the same convention as outlined in the \textit{notation convention} of \cite{JUNG201188}.
\end{lemma}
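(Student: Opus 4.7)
The approach is to compute $\partial_\eta^s \partial_\tau^m \varphi^0$ explicitly from the closed form $\varphi^0(\eta,\tau) = v^0(\tau)\exp(A\sin\tau\,\eta/\varepsilon)\chi_{[\pi,2\pi]}(\tau)$, regroup the resulting terms into coefficients that match the $a_{0,q}$ convention, and then pass to $L^p$ via the standard exponential-decay estimate $\int_0^R (\eta/\varepsilon)^k \exp(A\sin\tau\,\eta/\varepsilon)\,d\eta \leq \kappa\,\varepsilon\,|\sin\tau|^{-k-1}$, which is valid because $\sin\tau\leq 0$ on $[\pi,2\pi]$.

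First I would prove, by induction on $s$ and $m$, the identity
\[
\partial_\eta^s\partial_\tau^m \varphi^0 \;=\; \frac{1}{\varepsilon^s}\,\exp\!\Bigl(\tfrac{A\sin\tau}{\varepsilon}\eta\Bigr)\sum_{r+j\leq m} c_{r,j}(\tau)\,(\sin\tau)^{s-j}\,\Bigl(\frac{\eta}{\varepsilon}\Bigr)^{\!j}\,\frac{d^r v^0}{d\tau^r},
\]
where $c_{r,j}\in C^\infty([\pi,2\pi])$ arises from repeatedly differentiating the trigonometric factors. Each $\partial_\eta$ produces a factor $A\sin\tau/\varepsilon$; each $\partial_\tau$ acts either on $v^0(\tau)$ (raising $r$), on the prefactor $(A\sin\tau)^{s-j}$ (exchanging $\sin\tau\leftrightarrow\cos\tau$ and possibly raising $j$), or on the exponent (producing an extra $\cos\tau\cdot\eta/\varepsilon$ and thus raising $j$). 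Multiplying through by $(\sin\tau)^{-l}(\eta/\varepsilon)^n$ yields a finite sum of terms of the form
\[
\varepsilon^{-s}\,\exp\!\Bigl(\tfrac{A\sin\tau}{\varepsilon}\eta\Bigr)\,c_{r,j}(\tau)\,\frac{d^r v^0/d\tau^r}{(\sin\tau)^{\,j+l-s}}\,\Bigl(\frac{\eta}{\varepsilon}\Bigr)^{\!n+j},
\]
which, by the $a_{0,q}$ convention (with powers of $\eta/\varepsilon$ counted toward the index), corresponds to $a_{0,q}(\tau)$ for $q=r+(j+l-s)+(n+j)$. Since $r+j\leq m$, the maximal such index equals $h = -s+m+l+n+1$, as claimed.

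Next I would estimate the $L^p(D)$-norm term by term. Using $\sin\tau\leq 0$ on $[\pi,2\pi]$ and the substitution $t=A|\sin\tau|\eta/\varepsilon$,
\[
\int_0^R \Bigl(\tfrac{\eta}{\varepsilon}\Bigr)^{\!pk}\exp(pA\sin\tau\,\eta/\varepsilon)\,d\eta \;\leq\; \kappa\,\varepsilon\,|\sin\tau|^{-pk-1}, \qquad k=n+j.
\]
Taking $p$-th roots produces a factor $\varepsilon^{1/p}\,|\sin\tau|^{-(n+j)-1/p}$, which combines with the already-present weight $(\sin\tau)^{-(j+l-s)}$ to yield exactly the singularity absorbed by $a_{0,h}$. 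The remaining $\tau$-integral is then controlled by $\sup_\tau |a_{0,h}(\tau)|$ times a universal constant, and the overall $\varepsilon$-power is $\varepsilon^{1/p-s}$.

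The main obstacle is the combinatorial bookkeeping: verifying that every term generated by the Leibniz expansion genuinely fits under the single dominant index $h=-s+m+l+n+1$, and that the negative powers of $\sin\tau$ produced by repeated differentiation coincide with those encoded in the $a_{0,q}$ convention of \cite{JUNG201188}. A subsidiary technical point is that the cutoff $\chi_{[\pi,2\pi]}$ contributes no distributional boundary terms under $\partial_\tau$, because the compatibility condition \eqref{e:append_comp_cond} forces $v^0(B_x(\pi),B_y(\pi))=v^0(B_x(2\pi),B_y(2\pi))=0$, so $\varphi^0$ extends continuously by zero across $\tau=\pi,2\pi$ and the derivatives are taken classically. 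With these two points handled, the estimate follows from the computations outlined above.
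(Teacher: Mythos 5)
Your overall strategy coincides with the paper's: differentiate the explicit formula $\varphi^0=v^0(\tau)\exp(A\sin\tau\,\eta/\varepsilon)\chi_{[\pi,2\pi]}(\tau)$, pair each power of $\eta/\varepsilon$ with a power of $-\sin\tau$ so that $((-\sin\tau)\eta/\varepsilon)^k\exp(A\sin\tau\,\eta/\varepsilon)$ stays bounded, push the leftover negative powers of $\sin\tau$ into the index of $a_{0,q}$, and integrate the exponential in $\eta$. However, your bookkeeping contains a genuine error that changes the index. Your identity assigns the prefactor $(\sin\tau)^{s-j}$ to the term carrying $(\eta/\varepsilon)^j$, but differentiating the exponent produces a factor $A\cos\tau\,(\eta/\varepsilon)$ --- a cosine, not an inverse sine --- so the correct prefactor is $(\sin\tau)^{s-(m-r-j)}$ times a smooth function: only the $m-r-j$ tangential derivatives falling on $(A\sin\tau)^s$ lower its power. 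With the correct identity, after writing $(\eta/\varepsilon)^j=(-\sin\tau)^{-j}\bigl((-\sin\tau)\eta/\varepsilon\bigr)^j$, \emph{every} term has coefficient of the form $c(\tau)\,\frac{d^rv^0/d\tau^r}{(\sin\tau)^{m-r-s}}$, i.e.\ index exactly $(m-r-s)+r=m-s$ independent of $r$ and $j$: the $\sin^{-j}$ cost of the pairing is exactly offset by the $j$ derivatives that went into the exponent instead of into $v^0$. Your count $q=r+(j+l-s)+(n+j)$ double-counts $j$ and maxes at $2m+l+n-s$, which agrees with the claimed $h=m+l+n-s+1$ only when $m=1$; for $m=2$ (which the lemma covers and the convergence proof uses for $\varphi^0_{\tau\tau}$) you would land on index $h+1$, a strictly weaker statement requiring stronger compatibility conditions than \eqref{e:append_comp_cond}.

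A second, related gap: the $+1$ in $h$ does not come from your index formula at all, but from the degeneracy of the layer width at $\tau=\pi,2\pi$. Your $\eta$-integral yields $\varepsilon^{1/p}|\sin\tau|^{-(n+j)-1/p}$, and the fractional power $|\sin\tau|^{-1/p}$ does not fit the integer-power convention of $a_{0,q}$, nor can you take $\sup_\tau$ of it. The paper resolves this by retaining one full power of $(-\sin\tau)$ inside the integrand --- which is precisely what raises the index from $-s+m+l+n$ to $h=-s+m+l+n+1$ --- so that the residual tangential integral is $\int_\pi^{2\pi}(-\sin\tau)^{p-1}\,d\tau<\infty$. Your closing sentence (``the remaining $\tau$-integral is then controlled by $\sup_\tau|a_{0,h}|$ times a universal constant'') silently assumes this has been done. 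On the positive side, your observation that the compatibility condition forces $v^0$ and its first derivative to vanish at $\tau=\pi,2\pi$, so that $\partial_\tau$ and $\partial_\tau^2$ of $\varphi^0$ generate no distributional terms from $\chi_{[\pi,2\pi]}$, is correct and makes explicit a point the paper leaves implicit.
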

\begin{proof}
By using an Inductive approach, for integers \( s \geq 0, m = 0, 1, 2 \), we easily obtain
\begin{equation}
\frac{\partial^{s+m}\varphi^0}{\partial \bar{\eta}^{s} \partial \tau^{m}} = \epsilon^{s} \frac{\partial^{s+m}\varphi^0}{\partial \eta^{s} \partial \tau^{m}} = a_{0,-s+m}(\tau) \sum_{k=0}^{m} \left( (-\sin \tau)\bar{\eta} \right)^{k} \exp\left( (A\sin \tau)\bar{\eta} \right) \chi_{[\pi,2\pi]}(\tau).
\end{equation}
\text{Hence, we observe that}
\begin{equation}
(\sin \tau)^{-l} \bar{\eta}^{n} \frac{\partial^{s+m}\varphi^0}{\partial \eta^{s} \partial \tau^{m}} = a_{0,-s+m}(\tau) (\sin \tau)^{-1-l-n} \sum_{k=0}^{m} (-\sin \tau)\left( (-\sin \tau)\bar{\eta} \right)^{k+n} \epsilon^{-s}\exp\left( (A\sin \tau)\bar{\eta} \right) \chi_{[\pi,2\pi]}(\tau),
\end{equation}

%\begin{comment}
\begin{equation}
\left| (\sin \tau)^{-l} \bar{\eta}^{n} \frac{\partial^{s+m}\varphi^0}{\partial \eta^{s} \partial \tau^{m}} \right|_{L^p(D)}
\end{equation}
\begin{equation}
\begin{aligned}
= \left| a_{0,-s+m}(\tau) (\sin \tau)^{-1-l-n} \sum_{k=0}^{m} (-\sin \tau)\left( (-\sin \tau)\bar{\eta} \right)^{k+n} \epsilon^{-s}\exp\left( (A\sin \tau)\bar{\eta} \right) \right|_{L^p((0,R) \times (\pi,2\pi))}
\end{aligned}
\end{equation}

\begin{equation}
\leq k \left\{ \sup_{\tau} |a_{0,-s+m+l+n+1}(\tau)| \right\} \left( \int_{\pi}^{2\pi} \int_{0}^{R} |\sum_{k=0}^{m} \left( (-\sin \tau)\bar{\eta} \right)^{k+n}|^{p} (-\sin \tau)^{p}\epsilon^{-ps}\exp\left( (p A\sin \tau)\bar{\eta} \right)d\eta d\tau \right)^{\frac{1}{p}}
\end{equation}

\begin{equation}
\leq k \left\{ \sup_{\tau} |a_{0,-s+m+l+n+1}(\tau)| \right\} \left( \int_{\pi}^{2\pi} \int_{0}^{R}  \kappa(-\sin \tau)^{p}\epsilon^{-ps}\exp\left( (p A\sin \tau)\bar{\eta} \right)d\eta d\tau \right)^{\frac{1}{p}}
\end{equation}
Using 
\begin{equation}
\int_{\pi}^{2\pi} \int_{0}^{R} (-\sin \tau)^{p} \exp\left( c(A \sin \tau)\bar{\eta}\right) d\eta d\tau \leq \kappa \epsilon,
\end{equation}
we obtain
\begin{equation}
\left| (\sin \tau)^{-l} \left( \frac{\eta}{\varepsilon} \right)^n  \frac{\partial^{s+m} \varphi^{0}}{\partial \eta^s \partial \tau^m} \right|_{L_p(D)} \leq \kappa \sup_{\tau} |a_{0,-s+m+l+n+1}(\tau)| \varepsilon^{\frac{1}{p}-s}.
\end{equation}
\end{proof}
In the following lemma and theorem, we illustrate how the compatibility conditions are applied to derive the regularity and boundedness of the solutions. For simplicity, we introduce \(v^0\) as follows:
\begin{equation}
v^{0}(\tau)=u^0({B_{x}(\tau)}, {B_{y}(\tau)}) = \int_{-a \sinh R \sin \tau}^{a \sinh R \sin \tau} f(a \cosh R \cos \tau, s) \, ds, \quad \pi < \tau < 2\pi.
\end{equation}

\begin{lemma}
If \( q \leq 1 \) {\color{red} or } the compatibility conditions \eqref{e:append_comp_cond} hold for \( 2 + 3n \geq q-2 \geq 0 \), then \( a_{0,q}(\tau) \) is bounded for \( \tau \in [\pi, 2\pi] \).
\end{lemma}
\begin{proof}
It is sufficient to prove that
\begin{equation}\label{partial_proof}
  \frac{1}{(\sin \tau)^{q-r}} \frac{d^{r} v^0}{d \tau^{r}} (\tau) < \infty, \quad \text{for } r = 0, 1, \ldots, q.
\end{equation}
For the \(q\leq 1\), we can easily verify \eqref{partial_proof} by using \eqref{limit solution} .
For the \(q\geq 2\),
we derive the recursive relation for the derivatives of \(v^0(\tau)\).
Using an inductive approach, we easily obtain
\begin{equation}\label{recursive form}
\frac{d^r v^0 (\tau)}{d \tau^r} = \sum_{\substack{l+s \leq r \\ l, s \geq 0}} c_{ls} (\sin \tau)^{2l-r+s} \frac{\partial^{l+s} u^0}{\partial x^l \partial y^s} ({B_{x}(\tau)}, {B_{y}(\tau)}) + \sum_{0 \leq r' \leq r-1} c_{r'} (\sin \tau)^{r'-r} \frac{d^{r'} v^0 (\tau)}{d \tau^{r'}}.
\end{equation}
To bound the second term on the right-hand side of (\ref{recursive form}), we find that
\begin{equation}
\frac{\partial^l u^0}{\partial x^l}  (x, y) = \sum_{l'+s' \leq l-1, l',s' \geq 0} g_{l'ls'} (x) \frac{\partial^{l'+s'} f}{\partial x^{l'} \partial y^{s'}} (x, C_u(x)) \\
+ c_{l}   \int^{C_u(x)}_{y} \frac{\partial^l f}{\partial x^l}   (x, s) ds,
\end{equation}

\begin{equation}
\left\{ \frac{\partial^s}{\partial y^s} \left[ \frac{\partial^l u^0}{\partial x^l} \right] \right\} (x, y) = \sum_{l'+s' \leq l-1, l',s' \geq 0} g_{l'ls's} (x) \frac{\partial^{l'+s'} f}{\partial x^{l'} \partial y^{s'}} (x, C_u(x)) \\
+ c_{ls} \left\{ \frac{\partial^{s-1}}{\partial y^{s-1}} \left[ \frac{\partial^l f}{\partial x^l} \right] \right\} (x, y),
\end{equation}
where
\begin{align}
\frac{\partial^{-1} f}{\partial y^{-1}}(x,y) &:= \int_{y}^{C_u(x)} f(x, s) \, ds, \quad \text{and} \quad \left| g_{l'ls's}(x) \right| \leq \kappa C_u(x)^{-(-1+2l-2l'-s')}.
\end{align}
Then, we rewrite (\ref{recursive form}) as
\begin{align}\label{recursive form expansion}
\frac{1}{(\sin \tau)^{q-r}} \frac{d^{r} v^0}{d \tau^{r}} (\tau) &= \sum_{\substack{l+s \leq r, l, s \geq 0 \\ l'+s' \leq l-1, l', s' \geq 0}} \tilde{g}_{l'ls's}({B_{x}(\tau)}) \left( \frac{\partial^{l'+s'}}{\partial x^{l'} \partial y^{s'}} f \right)({B_{x}(\tau)}, {B_{y}(\tau)}) \nonumber \\
&\quad + \sum_{\substack{l+s \leq r,\\ l, s \geq 0 }} \tilde{c}_{ls} (\sin \tau) \left( \frac{\partial^{s-1}}{\partial y^{s-1}} \left[ \frac{\partial^{l}}{\partial x^{l}} f \right] \right) ({B_{x}(\tau)}, {B_{y}(\tau)}) \nonumber \\
&\quad + \sum_{0 \leq r' \leq r-1} \frac{c_{r'}}{(\sin \tau)^{q-r'}} \frac{d^{r'} v^0(\tau)}{d \tau^{r'}},
\end{align}
where
\[
 \left| \tilde{g}_{l'ls's}({B_{x}(\tau)}) \right| \leq \kappa (\sin \tau)^{-(1-2l'-s'+q-s)} \text{ and } \left| \tilde{c}_{ls}(\sin \tau) \right| \leq \kappa (\sin \tau)^{-(q-2l-s)}.
\]
For \(p'\), \(l'\), \(q'\), and \(s'\), 
with \[
0 \leq 2(p' + l') + q' + s'  \leq -1+q-s-1 \leq -2+q,
\]
we obtain that
\begin{equation}
\frac{\partial^{p'+q'}}{\partial x^{p'} \partial y^{q'}} \left( \frac{\partial^{l'+s'}}{\partial x^{l'} \partial y^{s'}} f \right) = 0 \quad \text{at } (\pm a\cosh R, 0) \quad \text{for } 0 \leq 2p' + q' \leq -1  - 2l' - s' + q - s - 1.
\end{equation}
Similarly, considering
\[
0 \leq 2(p' + l') + q' + s' - 1 \leq -2 + q,
\]
we deduce that
\begin{equation}
\frac{\partial^{p'+q'}}{\partial x^{p'} \partial y^{q'}} \left( \frac{\partial^{s-1}}{\partial y^{s-1}} \left[ \frac{\partial^{l}}{\partial x^{l}} f \right] \right) = 0 \quad \text{at } (\pm a\cosh R, 0) \quad \text{for } 2p' + q' \leq q - 2l - s - 1.
\end{equation}
By applying \eqref{compatibility_lemma} we establish the boundedness of the first and second terms of \eqref{recursive form expansion}.
To demonstrate the boundedness of the third sum on the right-hand side of the equation \eqref{recursive form expansion}, we use an inductive approach on the variable \( r \).
We consider the summation
\[
S_r = \sum_{0 \leq r' \leq r-1} \frac{c_{r'}}{(\sin \tau)^{q-r'}} \frac{d^{r'} v^0(\tau)}{d \tau^{r'}},
\]
where \( c_{r'} \) are coefficients, \(\tau\) is the variable, and \(v^0\) is a function of \(\tau\). We then aim to show that \(S_r\) is bounded for all \(r \geq 0\). We proceed by induction on \(r\).\\
\textbf{Initial Step:} For \(r = 0\),
\[
S_0 = 0,
\]
as there are no terms in the sum; hence, \(S_0\) is trivially bounded.\\
\textbf{Inductive Step:} Assume \(S_r\) is bounded for \(r \leq  R\).
We find that 
\begin{equation}\label{Inductive step}
S_{R+1} = \sum_{0 \leq r' \leq R-1} \frac{c_{r'}}{(\sin \tau)^{q-r'}} \frac{d^{r'} v^0(\tau)}{d \tau^{r'}} + \frac{c_{r-1}}{(\sin \tau)^{q-r+1}} \frac{d^{r-1} v^0(\tau)}{d \tau^{r-1}}.
\end{equation}
If we substitute the right-hand side of (\ref{Inductive step}) with the expression (\ref{recursive form expansion}) for \(S_R\), then under the assumption that \(S_R\) is bounded, \(S_{R+1}\) is also bounded.
\end{proof}

\begin{lemma}
Assume that the compatibility conditions \eqref{e:append_comp_cond} hold, then the following inequality is established:
\end{lemma}

\begin{equation}
\left |(\cosh R-\cosh (R-\eta)) \frac{\partial \varphi^0}{\partial \eta} \right|_{L^2(D)} \leq \kappa \varepsilon^{\frac{1}{2}}, \notag
\end{equation}
\begin{equation}
\left| \frac{\partial \varphi^0}{\partial \tau} \right|_{L^2(D)} \leq \kappa \varepsilon^{\frac{1}{2}}. \notag
\end{equation}

\begin{equation}
\left| L_\varepsilon (\varphi^0 - \bar{\varphi^0}) \right|_{L^2(D_{3R/4})} \leq \kappa \varepsilon^{\frac{1}{2}}.
\end{equation}

\begin{comment}
\[
\left| \theta^0 - \bar{\theta^0} \right|_{L^2(D^*)} \leq \kappa \varepsilon,
\]
\[
\left| \frac{\partial}{\partial r} (\theta^0 - \bar{\theta^0}) \right|_{L^2(D^*)} \leq \kappa \varepsilon^{\frac{2}{3}}, \quad \left| \frac{\partial}{\partial \xi} (\theta^0 - \bar{\theta^0}) \right|_{L^2(D^*)} \leq \kappa \varepsilon^{\frac{1}{3}}.
\tag{4.6}
\] 
\end{comment}

\begin{proof}

Applying \eqref{b:corrector_estimate} with \(s=1\), \(m=0\), \(l=0\), and \(n=0\),  we obtain an upper bound
\[
\left| \frac{\partial \varphi^0}{\partial \eta} \right|_{L^2(D^*)} \leq \kappa \epsilon^{1/2}.
\]
By using the Taylor expansion for \(\cosh R - \cosh (R-\eta)\), we obtain that 
\begin{equation}
\left|\left(\cosh R - \cosh (R-\eta)\right) \frac{\partial \varphi^0}{\partial \eta} \right|_{L^2(D)} \leq \left|\sum_{n=1}^{\infty} \frac{(\epsilon \eta)^n}{n!} \cosh R \right|_{L^2(D)} \left|\frac{\partial \varphi^0}{\partial \eta}\right|_{L^2(D)}.
\end{equation}
Given that \(R < 1\), we find that
\begin{equation}
\left|\left(\cosh R - \cosh (R-\eta)\right) \frac{\partial \varphi^0}{\partial \eta} \right|_{L^2(D^*)} \leq  \kappa \left(\sum_{n=1}^{\infty} \epsilon^n\right) \left|\frac{\partial \varphi^0}{\partial \eta}\right|_{L^2(D)} \leq \kappa \varepsilon^{\frac{1}{2}}.
\end{equation}
Similarly, by applying \eqref{b:corrector_estimate}, we can easily obtain the boundedness of the derivative in \(\tau\)
\[
\left| \frac{\partial \varphi^0}{\partial \tau} \right|_{L^2(D)} \leq \kappa \epsilon^{1/2}.
\]
It remains to estimate \( L_{\epsilon}(\varphi^{0} - \bar{\varphi}^{0}) \). From \eqref{b:corrector_estimate}, we deduce that:
\begin{align}\label{approximator_error_bound}
\left|\frac{\partial^2 (\varphi^{0} - \bar{\varphi^{0})}}{\partial \tau^2}\right|_{L^2(D_{3R/4})} &= \left|\frac{\partial^2 \varphi^{0}}{\partial \tau^2} (\delta - 1)\right|_{L^2(D_{3R/4})} \leq \kappa \epsilon^{\frac{1}{2}}, \notag\\
\left|\frac{\partial (\varphi^{0} - \bar{\varphi^{0})})}{\partial \eta}\right|_{L^2(D_{3R/4})} &= \left|\frac{\partial \varphi^{0}}{\partial \eta} (\delta - 1)\right|_{L^2(D_{3R/4})} + \left|\varphi^{0}\frac{\partial (\delta - 1)}{\partial \eta}\right|_{L^2(D_{3R/4})} \leq \kappa \epsilon^{-\frac{1}{2}} + \kappa \epsilon^{\frac{1}{2}} \leq \kappa \epsilon^{-\frac{1}{2}}, \notag\\
\left|\frac{\partial (\varphi^{0} - \bar{\varphi^{0})})}{\partial \tau}\right|_{L^2(D_{3R/4})} &= \left|\frac{\partial \varphi^{0}}{\partial \tau} (\delta - 1)\right|_{L^2(D_{3R/4})} \leq \kappa \epsilon^{\frac{1}{2}}. 
\end{align}
Using \eqref{enriched transforemd pde} and \eqref{approximator_error_bound}, we find that
\begin{align}
\left|L_{\epsilon}(\bar{\varphi}^{0} - \varphi^{0})\right|_{L^2(D_{3R/4})} \leq & \left|\frac{\epsilon}{H} \frac{\partial^2 (\bar{\varphi}^{0}  - \varphi^{0})}{\partial \tau^2}\right|_{L^2(D_{3R/4})} \nonumber \\
&+ \left|\frac{a\sinh (R-\eta) \cos \tau}{H} \frac{\partial (\bar{\varphi}^{0}  - \varphi^{0})}{\partial \tau}\right|_{L^2(D_{3R/4})} \nonumber \\
&+ \left|\frac{a (\cosh (R-\eta)-\cosh R) \sin \tau}{H} \frac{\partial (\bar{\varphi}^{0}  - \varphi^{0})}{\partial \eta}\right|_{L^2(D_{3R/4})} \\
\leq & \kappa \epsilon^{\frac{3}{2}} + \kappa \epsilon^{\frac{1}{2}} + \kappa \epsilon^{\frac{1}{2}} \leq \kappa \epsilon^{\frac{1}{2}}. 
\end{align}
\end{proof}
%We finally arrive at the convergence results.
\begin{theorem}
The following estimate holds:
\begin{equation}
|u^\varepsilon - u^0 - \bar{\varphi}^0|_{L^2(\Omega)} + \sqrt{\varepsilon} |u^\varepsilon - u^0 - \bar{\varphi}^0|_{H^1(\Omega)} \leq \kappa \sqrt{\varepsilon}.
\end{equation}
where \(u^\varepsilon\) and \(u^0\) are the solutions of (\ref{original pde}) and (\ref{limit solution}), respectively, and \(\bar{\varphi}^0\) is the corrector in (\ref{delta corrector}).
\end{theorem}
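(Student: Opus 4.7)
The plan is to set $w^{\varepsilon} := u^{\varepsilon} - u^{0} - \bar{\varphi}^{0}$, verify that $w^{\varepsilon} \in H^{1}_{0}(\Omega)$ (which follows from the construction: $u^{0}$ already vanishes on $\Gamma_{u}$, the cut-off $\delta(\eta)$ in $\bar{\varphi}^{0}$ kills the inner correction away from $\Gamma_{-}$, and on $\Gamma_{-}$ the trace of $\bar{\varphi}^{0}$ cancels $-u^{0}$ up to exponentially small terms absorbed elsewhere), then compute the residual $R_{\varepsilon} := L_{\varepsilon} w^{\varepsilon}$, bound it in $L^{2}(\Omega)$ by $\kappa\sqrt{\varepsilon}$, and finally run an energy estimate. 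Using $L_{\varepsilon} u^{\varepsilon} = f$ and $-u^{0}_{y} = f$ I obtain
\begin{equation*}
R_{\varepsilon} \;=\; \varepsilon \Delta u^{0} \;-\; L_{\varepsilon} \varphi^{0} \;-\; L_{\varepsilon}(\bar{\varphi}^{0} - \varphi^{0}),
\end{equation*}
and each of the three pieces is treated separately.

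The first piece $\varepsilon \Delta u^{0}$ is $O(\varepsilon)$ in $L^{2}$ since the compatibility condition \eqref{e:append_comp_cond} gives $u^{0} \in H^{2}(\Omega)$. The third piece is $O(\sqrt{\varepsilon})$ on $D_{3R/4}$ by the lemma already established; on the complement $\{\eta \geq 3R/4\}$ both $\varphi^{0}$ and $\bar{\varphi}^{0}$ are of size $\exp(-\kappa/\varepsilon)$, so the contribution is exponentially small. For the middle piece, I exploit that $\varphi^{0}$ solves the approximated corrector equation, i.e. $-\varepsilon \varphi^{0}_{\eta\eta} + a \cosh R \sin\tau\,\varphi^{0}_{\eta} = 0$. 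Subtracting this identity from the full expression of $L_{\varepsilon}\varphi^{0}$ in elliptic coordinates leaves precisely three remainders: the Taylor tail $(a\cosh(R-\eta) - a\cosh R)\sin\tau\,\varphi^{0}_{\eta}$, the tangential diffusion $\varepsilon\varphi^{0}_{\tau\tau}$, and the tangential convection $-a\sinh(R-\eta)\cos\tau\,\varphi^{0}_{\tau}$, plus curvature-type prefactors of order unity. Applying the corrector estimate Lemma with appropriate $(l,n,s,m)$ and, in the Taylor remainder, expanding $\cosh(R-\eta)-\cosh R = O(\eta)$ to combine the factor of $\eta$ with $\varphi^{0}_{\eta}$ into the form controlled by the lemma, each remainder is shown to be $O(\sqrt{\varepsilon})$ in $L^{2}$. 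Summing, $\|R_{\varepsilon}\|_{L^{2}(\Omega)} \leq \kappa \sqrt{\varepsilon}$.

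For the energy estimate, I test $L_{\varepsilon} w^{\varepsilon} = R_{\varepsilon}$ against $w^{\varepsilon}$. Since $w^{\varepsilon}|_{\partial\Omega} = 0$, the convective term $-\int w^{\varepsilon}_{y} w^{\varepsilon}$ vanishes identically, so
\begin{equation*}
\varepsilon\, |w^{\varepsilon}|_{H^{1}(\Omega)}^{2} \;\leq\; \|R_{\varepsilon}\|_{L^{2}(\Omega)}\, \|w^{\varepsilon}\|_{L^{2}(\Omega)}.
\end{equation*}
To recover the $L^{2}$ control at the correct order I invoke the hyperbolic structure of the reduced operator $-\partial_{y}$: testing against a weight adapted to the outflow direction (equivalently, the duality argument used by Jung in the circular-domain case) produces the bound $\|w^{\varepsilon}\|_{L^{2}(\Omega)} \leq \kappa \bigl(\|R_{\varepsilon}\|_{L^{2}(\Omega)} + \sqrt{\varepsilon}\,|w^{\varepsilon}|_{H^{1}(\Omega)}\bigr)$. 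Inserting $\|R_{\varepsilon}\|_{L^{2}} \leq \kappa\sqrt{\varepsilon}$ and absorbing the $H^{1}$ contribution via Young's inequality yields the claimed combined estimate.

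The main obstacle is precisely the $L^{2}$ step: the bilinear form is only coercive in the $\sqrt{\varepsilon}\,|\cdot|_{H^{1}}$ semi-norm, and Poincar\'e alone is too weak to close the loop. The delicate supporting technicality is that the tangential field in the elliptic coordinates degenerates at the characteristic points $(\pm A, 0)$, where $\sin\tau$ vanishes; it is here that the compatibility condition \eqref{e:append_comp_cond} is consumed, namely to guarantee boundedness of the coefficients $a_{0,q}(\tau)$ that appear in the corrector estimate Lemma and thus to prevent spurious blow-up of the bounds on $\varphi^{0}_{\tau\tau}$ and $\varphi^{0}_{\tau}$ near $\tau = \pi, 2\pi$.
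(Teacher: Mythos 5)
Your proposal is correct and follows essentially the same route as the paper: the identical residual decomposition $\varepsilon\Delta u^0 - L_\varepsilon\varphi^0 - L_\varepsilon(\bar\varphi^0-\varphi^0)$, cancellation of the leading terms via the approximate corrector equation, the corrector estimate lemma for the Taylor tail and tangential terms, and a weighted energy estimate in which the outflow-adapted weight (the paper uses $e^y w$, localized by $\delta(\eta)$) supplies the zeroth-order coercivity that plain testing against $w$ cannot. The only differences are organizational: the paper obtains the $\varepsilon|w|^2_{H^1}$ and $|w|^2_{L^2}$ coercivity in a single weighted test rather than your two-step version, and it treats the $\varepsilon\varphi^0_{\tau\tau}$ term weakly (one integration by parts in $\tau$) instead of bounding it directly in $L^2$; both variants close at the same order.
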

\begin{proof}
Setting \( w = u^\varepsilon - u^0 - \bar{\varphi}^0 \) from (\ref{Transforemd pde}), (\ref{Approximated corrector equation}) and (\ref{delta corrector}), we deduce that
\begin{equation}
\begin{cases}
  -\varepsilon \Delta w - w_{y} = \text{R.H.S.},\\
  w = 0 \quad \text{on} \quad \partial \Omega.
\end{cases}
\end{equation}
Using the approximate form \(\bar{\varphi^0}\) for \(\varphi^0\), we write
\[
L_\varepsilon \bar{\varphi^0} = L_\varepsilon \varphi^0 + L_\varepsilon (\bar{\varphi^0} - \varphi^0),
\]
and then
\[
(L_\varepsilon \varphi^0, \varphi) = (L_\varepsilon \varphi^0, \varphi \delta(\eta)),
\]
for all \(\varphi \in H^1_0(\Omega)\), where \((,)\) denotes the scalar product in the space \(L^2(\Omega)\) and \(\delta(\eta)\) is a smooth function such that \(\delta(\eta) = 1\) if \(\tau \leq R/2\) and \(\delta(\eta) = 0\) if \(\tau \geq 3R/4\). Note that \(\varphi^0 = 0\) for \(\tau \geq 3R/4\).
We first observe that
\begin{equation}\label{R.H.S}
R.H.S. = \varepsilon \Delta u^0 - L_\varepsilon (\varphi^0) + L_\varepsilon (\varphi^0 - \bar{\varphi^0}).
\end{equation}
Taking the scalar product of (\ref{R.H.S}) with \( e^y w \), we find that
\begin{align}
\ \varepsilon | w |^2_{H^1(\Omega)} + | w |^2_{L^2(\Omega)} \ &\leq \varepsilon| ( \Delta u^\varepsilon, e^y w \delta(\eta)) | + | (L_\varepsilon (\varphi^0), e^y w \delta(\eta)) | + | (L_\varepsilon (\varphi^0 - \bar{\varphi^0}), e^y w \delta(\eta)) |  \\
&\leq \kappa \varepsilon | u^0 |_{H^1(D)} | w |_{H^1(D)} \notag\\
&+ \left| \left( \frac{1}{H} \left( -\varepsilon \varphi_{\tau \tau} - a \sin \tau (\cosh R - \cosh (R-\eta)) \varphi_\eta - a \sinh \eta \cos \tau \varphi_{\tau} \right), e^y w \delta(\eta) \right) \right|
\notag\\
&+ \left| (L_\varepsilon (\varphi^0 - \bar{\varphi^0}), e^y w \delta(\eta)) \right| \\
&\leq \kappa \varepsilon | u^0 |_{H^1(D)} | w |_{H^1(D)} 
+ \kappa \varepsilon \left| \frac{\partial \varphi^0}{\partial \tau} \right|_{L^2(D)} \left( \left| \frac{\partial w}{\partial \tau} \delta(\eta) \right|_{L^2(D)} + \left| w \delta(\eta) \right|_{L^2(D)}\right) \notag\\
&+ \kappa \left|(\cosh R-\cosh (R-\eta)) \frac{\partial \varphi^0}{\partial \eta} \right|_{L^2(D)} \left| w \delta(\eta) \right|_{L^2(D)} \notag\\
&+ \kappa|\cos \tau \frac{\partial \varphi^0}{\partial \tau}|_{L^2(D)} \left| w \delta(\eta) \right|_{L^2(D)} \notag\\
&+ \kappa\left| L_\varepsilon (\varphi^0 - \bar{\varphi^0}) \right|_{L^2(D_{3R/4})} \left| w \delta(\eta) \right|_{L^2(D)}.
\end{align}
Hence, given that
\[
\left| \frac{\partial w}{\partial \tau} \delta(\eta) \right|_{L^2(D)}, \left| \nabla_{\tau, \eta} (w \delta(\eta)) \right|_{L^2(D)} \leq \kappa |w|_{H^1(\Omega)}
\]
and
\[
\left| w \delta(\eta) \right|_{L^2(D)} \leq \kappa |w|_{L^2(\Omega)},
\]
and using \eqref{b:corrector_estimate}, we obtain
\begin{equation}
|w|_{L^2(\Omega)} + \sqrt{\varepsilon} |w|_{H^1(\Omega)} \leq \kappa \sqrt{\varepsilon}.
\end{equation}
\end{proof}

% Acknowledgments and references sections can be created using \section*{...} to avoid numbering them
\section*{Acknowledgments}
Gie was partially supported by Ascending Star Fellowship, Office of EVPRI, University of Louisville; Simons Foundation Collaboration Grant for Mathematicians; Research R-II Grant, Office of EVPRI, University of Louisville; Brain Pool Program through the National Research Foundation of Korea (NRF) (2020H1D3A2A01110658). The work of Y. Hong was supported by Basic Science Research Program through the National Research Foundation of Korea (NRF) funded by the Ministry of Education (NRF-2021R1A2C1093579) and by the Korea government(MSIT) (RS-2023-00219980, RS-2024-00440063). Jung was supported by the National Research Foundation of Korea(NRF) grant funded by the Korea government(MSIT) (No. 2023R1A2C1003120).

\bibliographystyle{plain}
\bibliography{references}
\end{document}